\newenvironment{widerequation*}{
    \begin{adjustwidth}{-2cm}{-2cm}\begin{equation*}}
    {\end{equation*}\end{adjustwidth}}
\def\defequal{\mathrel{\ensurestackMath{\stackon[1pt]{=}{\scriptscriptstyle\Delta}}}}
\DeclareMathOperator*{\argmax}{\arg\max}
\DeclareMathOperator*{\argmin}{\arg\,\min}
\DeclareMathOperator{\atantwo}{atan2}
\newcommand{\defeq}{\vcentcolon=}
\newcommand{\E}[2]{\mathbb{E}_{#1}\left[#2\right]}
\newcommand{\Prob}[2]{\mathbb{P}_{#1}\left(#2\right)}
\newcommand{\lnr}[1]{\ln\left( #1\right)}
\newcommand{\expr}[1]{\exp\left( #1\right)}
\newcommand{\Ent}[2]{\text{H}_{#1}\left[ #2\right]}
\RenewCommandCopy{\leq}{\leqslant}
\RenewCommandCopy{\geq}{\geqslant}
\renewcommand{\d}[1]{\operatorname{d}\!{#1}}
\newcommand{\interior}[1]{\mathring{#1}}
\newcommand{\closure}[1]{\overline{#1}}
\newtheorem{theo}{Theorem}
\newtheorem{prop}{Proposition}
\newtheorem{lem}{Lemma}
\numberwithin{lem}{section}
\newtheorem{cor}{Corollary}[theo]
\theoremstyle{remark}
\newtheorem{rem}{Remark}
\newtheorem*{rem*}{Remark}
\numberwithin{equation}{section}
\theoremstyle{definition}
\newtheorem{defi}{Definition}
\date{}
\title{Resolution of the Borel–Kolmogorov Paradox via the Maximum Entropy Principle}
\author{Rapha\"el Tr\'esor \qquad Mykola Lukashchuk}
\affil{Eindhoven University of Technology, The Netherlands}
\begin{document}
\makeatletter
\define@key{x sphericalkeys}{radius}{\def\myradius{#1}}
\define@key{x sphericalkeys}{theta}{\def\mytheta{#1}}
\define@key{x sphericalkeys}{phi}{\def\myphi{#1}}
\tikzdeclarecoordinatesystem{x spherical}{%
    \setkeys{x sphericalkeys}{#1}%
    \pgfpointxyz{\myradius*cos(\mytheta)}{\myradius*sin(\mytheta)*cos(\myphi)}{\myradius*sin(\mytheta)*sin(\myphi)}}

% along y axis
\define@key{y sphericalkeys}{radius}{\def\myradius{#1}}
\define@key{y sphericalkeys}{theta}{\def\mytheta{#1}}
\define@key{y sphericalkeys}{phi}{\def\myphi{#1}}
\tikzdeclarecoordinatesystem{y spherical}{%
    \setkeys{y sphericalkeys}{#1}%
    \pgfpointxyz{\myradius*sin(\mytheta)*cos(\myphi)}{\myradius*cos(\mytheta)}{\myradius*sin(\mytheta)*sin(\myphi)}}

% along z axis + visibility flag \MCheatOpa (0.3 hidden, 1 shown)
\define@key{z sphericalkeys}{radius}{\def\myradius{#1}}
\define@key{z sphericalkeys}{theta}{\def\mytheta{#1}}
\define@key{z sphericalkeys}{phi}{\def\myphi{#1}}
\tikzdeclarecoordinatesystem{z spherical}{%
    \setkeys{z sphericalkeys}{#1}%
    \pgfmathsetmacro{\Xtest}{sin(\tdplotmaintheta)*cos(\tdplotmainphi-90)*sin(\mytheta)*cos(\myphi)
    +sin(\tdplotmaintheta)*sin(\tdplotmainphi-90)*sin(\mytheta)*sin(\myphi)
    +cos(\tdplotmaintheta)*cos(\mytheta)}
    \pgfmathsetmacro{\ntest}{ifthenelse(\Xtest<0,0,1)}
    \ifnum\ntest=0 \xdef\MCheatOpa{0.3}\else\xdef\MCheatOpa{1}\fi
    \pgfpointxyz{\myradius*sin(\mytheta)*cos(\myphi)}{\myradius*sin(\mytheta)*sin(\myphi)}{\myradius*cos(\mytheta)}}

% --- plothandler that updates stroke opacity segment-by-segment --------
\pgfdeclareplothandler{\pgfplothandlercurveto}{}{%
  point macro=\pgf@plot@curveto@handler@initial,
  jump macro=\pgf@plot@smooth@next@moveto,
  end macro=\pgf@plot@curveto@handler@finish
}
\def\pgf@plot@smooth@next@moveto{%
  \pgf@plot@curveto@handler@finish%
  \global\pgf@plot@startedfalse%
  \global\let\pgf@plotstreampoint\pgf@plot@curveto@handler@initial%
}
\def\pgf@plot@curveto@handler@initial#1{%
  \pgf@process{#1}%
  \pgf@xa=\pgf@x\pgf@ya=\pgf@y%
  \pgf@plot@first@action{\pgfqpoint{\pgf@xa}{\pgf@ya}}%
  \xdef\pgf@plot@curveto@first{\noexpand\pgfqpoint{\the\pgf@xa}{\the\pgf@ya}}%
  \global\let\pgf@plot@curveto@first@support=\pgf@plot@curveto@first%
  \global\let\pgf@plotstreampoint=\pgf@plot@curveto@handler@second%
}
\def\pgf@plot@curveto@handler@second#1{%
  \pgf@process{#1}%
  \xdef\pgf@plot@curveto@second{\noexpand\pgfqpoint{\the\pgf@x}{\the\pgf@y}}%
  \global\let\pgf@plotstreampoint=\pgf@plot@curveto@handler@third%
  \global\pgf@plot@startedtrue%
}
\def\pgf@plot@curveto@handler@third#1{%
  \pgf@process{#1}%
  \xdef\pgf@plot@curveto@current{\noexpand\pgfqpoint{\the\pgf@x}{\the\pgf@y}}%
  \pgf@xa=\pgf@x \pgf@ya=\pgf@y%
  \pgf@process{\pgf@plot@curveto@first}\advance\pgf@xa by-\pgf@x \advance\pgf@ya by-\pgf@y
  \pgf@xa=\pgf@plottension\pgf@xa \pgf@ya=\pgf@plottension\pgf@ya
  \pgf@process{\pgf@plot@curveto@second}\pgf@xb=\pgf@x \pgf@yb=\pgf@y \pgf@xc=\pgf@x \pgf@yc=\pgf@y
  \advance\pgf@xb by-\pgf@xa \advance\pgf@yb by-\pgf@ya \advance\pgf@xc by\pgf@xa \advance\pgf@yc by\pgf@ya
  \@ifundefined{MCheatOpa}{}{%
    \pgf@plotstreamspecial{\pgfsetstrokeopacity{\MCheatOpa}}}%
  \edef\pgf@marshal{\noexpand\pgfsetstrokeopacity{\noexpand\MCheatOpa}
    \noexpand\pgfpathcurveto{\noexpand\pgf@plot@curveto@first@support}{\noexpand\pgfqpoint{\the\pgf@xb}{\the\pgf@yb}}{\noexpand\pgf@plot@curveto@second}
    \noexpand\pgfusepathqstroke
    \noexpand\pgfpathmoveto{\noexpand\pgf@plot@curveto@second}}%
  {\pgf@marshal}%
  \global\let\pgf@plot@curveto@first=\pgf@plot@curveto@second%
  \global\let\pgf@plot@curveto@second=\pgf@plot@curveto@current%
  \xdef\pgf@plot@curveto@first@support{\noexpand\pgfqpoint{\the\pgf@xc}{\the\pgf@yc}}%
}
\def\pgf@plot@curveto@handler@finish{%
  \ifpgf@plot@started%
    \pgfpathcurveto{\pgf@plot@curveto@first@support}{\pgf@plot@curveto@second}{\pgf@plot@curveto@second}%
  \fi%
}

\pgfmathsetmacro{\RadiusSphere}{3}
\tdplotsetmaincoords{60}{131}
% parameters: 5 latitude levels (incl. poles), 9 longitudes => 29 points
\pgfmathtruncatemacro{\Levels}{5}
\pgfmathtruncatemacro{\NLong}{9}
\pgfmathsetmacro{\dTheta}{180/(\Levels-1)}  % 180/4 = 45°
\pgfmathsetmacro{\dPhi}{360/(\NLong)}       % 360/9 = 40°

\maketitle

\begin{abstract}
This paper presents a rigorous resolution of the Borel–Kolmogorov paradox using the Maximum Entropy Principle. We construct a metric-based framework for Bayesian inference that uniquely extends conditional probability to events of null measure. The results unify classical Bayes’ rules and provide a robust foundation for Bayesian inference in metric spaces.
\end{abstract}
% \tableofcontents
% \newpage

\section{Introduction}\label{sec: introduction}

Conditional probability of an event \(B\) given an event \(A\) is classically defined as

\begin{equation}\label{def: conditional probability}
 \Prob{}{B\mid A} \defeq \frac{\Prob{}{B \cap A}}{\Prob{}{ A}} \, ,
\end{equation}

provided that \(\Prob{}{A}>0\) \cite[Chapter 1.4]{kolmogorov_foundations_2018}.
This formulation constitutes the core of Bayes’ rule, also known as Bayesian conditionalization, which updates a prior probability measure to reflect new information.
When an event \(A\) is known to occur almost surely, Bayes’ rule prescribes replacing the prior with the conditional probability measure defined above. 

However, in many practical and theoretical contexts, the event \(A\) of interest has null measure under the prior, rendering the classical definition \eqref{def: conditional probability} inapplicable. 
A canonical example arises when \(A\) corresponds to the observation of a continuous random variable \(Y\) taking a specific value \(\hat{y}\).

Attempts to generalize Bayes’ rule to the case \(\Prob{}{A}=0\) have yielded either multiple and conflicting definitions or concluded that such an extension is inherently ill-posed. 
This leads to the well-known Borel–Kolmogorov paradox, which highlights the ambiguity and inconsistency in extending conditional probability to events of measure zero.

Given that the primary utility of conditional probability lies in its role within Bayes’ rule, the existence of multiple Bayes posteriors for the same pair \((\mathbb{P},A)\) raises significant concerns about the coherence of Bayesian inference, as highlighted by  \cite{meehan_borel-kolmogorov_2021, rescorla_epistemological_2015, arnold_conditional_2003, bordley_avoiding_2015}.

In this work, we seek a principled and mathematically rigorous framework that yields a unique conditional probability formula for each probabilistic model. 
Departing from purely measure-theoretic approaches, we incorporate geometric considerations, following the insights of \citet[Sections 39, 43]{borel_ements_1909} and \citet[Section 7]{gyenis_conditioning_2017}, who argue that intuitive resolutions to the Borel–Kolmogorov paradox are best understood through the joint lens of geometric and probabilistic structures rather than probabilistic only.

We construct a metric-based extension of Bayes’ rule, resolving the paradox via continuity arguments. Specifically, we demonstrate that topology enables the definition of a sequence \((\mathbb{P}_n)_{n\in{\mathbb{N}}}\)
converging to a unique limiting measure, denoted \(\mathbb{P}(.\mid A)\) regardless of whether \(\Prob{}{A}=0\).
This sequence is derived from the Maximum Entropy Principle (MaxEnt), originally introduced by \citet{jaynes_information_1957} in the context of statistical mechanics and now widely applied across disciplines including physics, economics, and information theory \cite{ben-naim_entropy_2019, chandrasekaran_relative_nodate, wu_maximum_1997, gull_maximum_1984, baldwin_use_2009, clarke_information_2007}.

MaxEnt provides a natural framework for extending conditional probability.
Indeed, \cite{williams_bayesian_1980} then later \cite{giffin_updating_2007} proved that classical conditional probability \eqref{def: conditional probability} can be recovered as a special case of MaxEnt (see Section~\ref{subsec:MAxEnt}). 

Building on the convex duality framework of \citet{borwein_partially_1992} (see Section \ref{subsec:convex opti}), we make explicit the sequence \((\mathbb{P}_n)_{n\in{\mathbb{N}}}\) of MaxEnt problems and define its limit as the Bayes' posterior (see Section~\ref{section: Bayes rule for null measure event}).
Subject to mild regularity conditions, we prove that our definition is coherent and applicable to both null and non-null events (see Section~\ref{subsec: Bayes rule for Hausdorff space}).
We prove that in metric spaces, our definition is the unique extension of the conditional probability \eqref{def: conditional probability} to the case \(\Prob{}{A}=0\) (see Section~\ref{subsec: Bayes rule as subcase of MaxEnt posterior}).

Our extension depends on the underlying metric. 
This dependence explains the multiplicity of Bayes' posterior for the same couple (\textit{probabilistic model}, \textit{conditionalization set}) and clarifies the role of geometry in resolving ambiguities (see Section~\ref{sec:paradox}). To our knowledge, this geometric approach provides a mathematically rigorous framework for resolving the Borel–Kolmogorov paradox while addressing practical modeling needs.

Beyond its theoretical contributions, our framework offers a robust and intuitive methodology for Bayesian inference in models enriched with metrizable topologies (see Section~\ref{sec: discussion}). 
Our main contributions are as follows:

\begin{enumerate}
\item We derive conditional probability formulas from a single unifying principle—the Maximum Entropy Principle—yielding a coherent generalization of Bayes’ rule (Section~\ref{section: Bayes rule for null measure event}).
\item We provide a geometric resolution of the Borel–Kolmogorov paradox (Section~\ref{sec:paradox}). 
\item We demonstrate that our approach enables consistent and principled Bayesian inference in any probabilistic model endowed with a metrizable topology (Section~\ref{sec: discussion}), thereby offering a rigorous solution to a longstanding modeling challenge.
\end{enumerate}

\section{Preliminary}\label{sec:prelim}

\subsection{Maximum Entropy Principle}\label{subsec:MAxEnt}
To avoid ambiguity, we begin by clarifying our usage of Bayes’ rule and the Maximum Entropy Principle (MaxEnt).
We then explain how the classical definition of conditional probability \eqref{def: conditional probability} arises from MaxEnt. 

Both Bayes’ rule and MaxEnt are procedures that update a prior probability measure — representing initial knowledge — based on new information, yielding a posterior measure. 
In MaxEnt, the update is performed by selecting the posterior measure that minimizes the relative entropy (also known as the Kullback-Leibler divergence) subject to constraints imposed by the new information.

We define the relative entropy as follows:

\begin{defi}[Relative Entropy or Kullback Leibler divergence]\label{def: rela-ent}
    Consider a $\sigma$-finite measure \(\nu\) on a measurable space \((E,\mathcal{E})\). 
    For a probability measure $\mu$ absolutely continuous with respect to \(\nu\) (i.e., \(\mu\ll\nu\)) the entropy of $\mu$ relative to $\nu$ is defined by 
    \begin{equation}\label{eq: rela-ent}
        \Ent{\nu}{\mu} \defeq \int_E \frac{\d{}\mu}{\d{}\nu}(x) \lnr{\frac{\d{}\mu}{\d{}\nu}(x)} \d{}\nu(x)\,.
    \end{equation}

    where \(\frac{d\mu}{d\nu}\) denotes the Radon-Nikodym derivative. 
    If \(\mu\) is not absolutely continuous with respect to \(\nu\) (i.e., \(\mu\not\ll\nu\)), we set
    \begin{equation}\label{eq: rela-ent-not-continuous}
        \Ent{\nu}{\mu} \defeq +\infty \,.
    \end{equation}
    
\end{defi}
\medskip

When the evaluated measure is absolutely continuous with respect to the reference measure, the relative entropy is always a positive value or null:
    
    \begin{prop}[\cite{cover_elements_2001}Theorem 2.6.3]\label{prop: rel ent is positive}
         Consider two probability measure \(\nu\) and \(\mu\) on a measurable space \((E,\mathcal{E})\). If $\mu\ll\nu$, then

         \begin{equation}
              \Ent{\nu}{\mu} \geq 0 \,,
         \end{equation}

         with equality if and only if \(\mu=\nu\).
    \end{prop}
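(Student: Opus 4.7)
My plan is to prove this Gibbs-type inequality via the tangent-line bound for the convex function $\phi(t) \defeq t\ln t$. Since $\phi'(t) = \ln t + 1$, the tangent to $\phi$ at $t = 1$ is the line $t - 1$, so convexity yields the pointwise inequality
\[
t \ln t \;\geq\; t - 1 \qquad \text{for all } t \geq 0,
\]
with equality if and only if $t = 1$ (using the convention $0 \ln 0 \defeq 0$). This is the only nontrivial ingredient.

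First I would set $p \defeq \frac{\d{}\mu}{\d{}\nu}$, the Radon--Nikodym derivative, which exists $\nu$-a.e.\ by the hypothesis $\mu \ll \nu$, so that Definition~\ref{def: rela-ent} reads $\Ent{\nu}{\mu} = \int_E p\,\lnr{p}\,\d{}\nu$. Second, I would apply the tangent-line inequality pointwise with $t = p(x)$, obtaining the $\nu$-a.e.\ non-negative integrand $p\ln p - (p - 1) \geq 0$. Third, I would integrate against $\nu$: since both measures are probabilities, $\int_E p\,\d{}\nu = \mu(E) = 1 = \nu(E) = \int_E \d{}\nu$, giving
\[
0 \;\leq\; \int_E \bigl(p\ln p - (p-1)\bigr)\,\d{}\nu \;=\; \Ent{\nu}{\mu} - 0 \;=\; \Ent{\nu}{\mu},
\]
which is the desired bound. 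Packaging the subtraction through the non-negative integrand sidesteps any issue of the entropy being $+\infty$ (in which case the conclusion is trivial anyway).

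For the equality case, the tangent-line inequality is strict whenever $t \neq 1$, so $\Ent{\nu}{\mu} = 0$ forces $p\ln p - (p-1) = 0$ $\nu$-a.e., i.e.\ $p = 1$ $\nu$-a.e., which is precisely $\mu = \nu$. I do not anticipate a substantive obstacle: the only bookkeeping concerns the set $\{p = 0\}$, where the convention $0\ln 0 = 0$ makes the integrand well-defined and the inequality reduces to $0 \geq -1$. The whole argument rests on nothing beyond the Radon--Nikodym theorem and the elementary convexity of $t \mapsto t\ln t$.
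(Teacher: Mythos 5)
Your proof is correct. Note that the paper itself offers no proof of this proposition; it is stated as an imported result with a citation to Cover and Thomas (Theorem 2.6.3), so there is no in-paper argument to compare against. Your tangent-line argument via $t\ln t \geq t-1$ is a complete and self-contained derivation, and it differs slightly from the proof in the cited reference, which bounds $-\Ent{\nu}{\mu} = \int \ln\bigl(\tfrac{\d{}\nu}{\d{}\mu}\bigr)\d{}\mu \leq \ln \int \tfrac{\d{}\nu}{\d{}\mu}\d{}\mu \leq 0$ by Jensen's inequality applied to the concave logarithm. The two routes are essentially equivalent in content, but yours has the advantage of localizing the equality analysis to a pointwise statement ($p\ln p - (p-1) = 0$ iff $p=1$), whereas the Jensen route requires invoking the strict-concavity equality case of Jensen, which is a touch less elementary to justify rigorously. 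Your handling of the two potential pitfalls — the set $\{p=0\}$ via the convention $0\ln 0 = 0$, and the possibility $\Ent{\nu}{\mu}=+\infty$ via integrating the non-negative combination $p\ln p - (p-1)$ rather than subtracting two integrals — is exactly right; one might add for completeness that $\int_E p\ln p\,\d{}\nu$ is always well-defined in $(-\infty,+\infty]$ because the negative part of $t\ln t$ is bounded below by $-e^{-1}$.
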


Unlike conditional probability, Bayes’ rule and MaxEnt are not mathematical objects per se, but rather inference procedures. 
Due to their widespread use beyond mathematics, the definition of MaxEnt often lacks the precision required for rigorous comparison with Bayes’ rule (see \citep{uffink_constraint_1996}). 
In this work, we adopt a specific formulation of MaxEnt based on linear constraints:

\begin{defi}[MaxEnt with Linear Constraints]\label{def:MaxEnt}
    Consider a prior probability \(\nu\) measure on a measurable space \((E,\mathcal{E})\). 
    On the space of measures that measure \(\mathcal{E}\), consider a constrained set defined as follows:

    \begin{equation}\label{eq: MaxEnt principle information}
        I \defequal \left\{ \mu : \int_E f_i(x) \d{}\mu(x) = a_i \, , \; \textit{ for } i= 1\ldots n \right\} \,,
    \end{equation}
    
    where each \(f_i\) is a measurable real-valued function, \(a_i\in\mathbb{R}\) and \(n\in\mathbb{N}\).

Then the MaxEnt problem \((\nu,I)\) consists of finding

\begin{equation}\label{eq: MaxEnt problem loose formulation}
     \inf_{\mu \in  I} \Ent{\nu}{\mu} \,.
\end{equation}

The problem is said to be well-defined if the infimum \eqref{eq: MaxEnt problem loose formulation} is attained by a unique measure.
If the MaxEnt problem \((\nu,I)\) is well-defined, the MaxEnt refers to the procedure that returns this unique measure.
\end{defi}
\medskip

\citet{williams_bayesian_1980} showed that the classical conditional probability \eqref{def: conditional probability} can be derived from this MaxEnt formulation. 
Given a probability measure \(\mathbb{P}\) with \(\Prob{}{A}>0\), and the constraint set

\begin{align}
    I =  \left\{ \mathbb{Q}\, : \, \mathbb{Q}(A) = 1 \right\} =  \left\{ \mathbb{Q} : \int_E 1 \, \d{}\mathbb{Q}(x) = 1 \, , \; \int_E \mathds{1}_A(x) \, \d{}\mathbb{Q}(x) \right\}\,,
\end{align}

the solution to the MaxEnt problem \((\mathbb{P},I)\) is the conditional probability \(\Prob{}{.\mid A}\) \eqref{def: conditional probability} (see Appendix~\ref{sec: conditional proba} for a modern version of Williams’s proof). 
Thus, where conditional probability is unambiguous, Bayes’ rule is a special case of MaxEnt, making MaxEnt a natural framework for extending conditional probability and unifying Bayesian inference.

    However, Williams’s proof does not extend directly to the case \(\Prob{}{A}=0\).
     If we attempt to constrain the search space to \(\{\mathbb{Q}\;:\; \mathbb{Q}(A) = 1\}\) when \(\Prob{}{A}=0\), then by definition \eqref{eq: rela-ent-not-continuous}, all such measures have infinite relative entropy with respect to \(\mathbb{P}\), i.e.,

    \begin{align}
        \Ent{\mathbb{P}}{\mathbb{Q}} = \infty  \, , \quad \forall \, \mathbb{Q} \, \text{ such that } \; \mathbb{Q}(A)=1 \,.
    \end{align}

To address this, we will extend Williams’s approach to topological spaces. 
In such spaces, the constraint \(\mathbb{Q}(A) = 1\) can be replaced by the more precise condition \(\text{Supp}(\mathbb{Q})\subset A\).
The support of a measure is defined as follows:

\begin{defi}[Support]
    Consider a topological space \((E,\mathcal{T})\) measured by the Borel \(\sigma\)-algebra. Then the support of a measure \(\mu\) is defined by 
    \begin{align}
        \text{Supp}(\mu) \defeq E /  \bigcup \left\{ U \, : \; U\in\mathcal{T} \text{ and }  \mu(U) = 0 \right\} \,.
    \end{align}
\end{defi}
\medskip

\subsection{Convex Optimization}\label{subsec:convex opti}

Our extension of conditional probability requires explicit solutions to MaxEnt problems. 
This necessitates tools from the convex optimization. In particular, we rely on results by \cite{borwein_partially_1992}, which provide general conditions for the existence and expression of solutions of convex optimization problems. 
Notably, \cite{borwein_duality_1991} applied their duality framework to a class of problems structurally analogous to MaxEnt problems, enabling us to present the following result as a direct application of their theory.

    \begin{theo}\label{theo: dual problem of ME}
        Let \((E,\mathcal{E})\) be a measurable space equipped with a $\sigma$-finite measure \(\nu\).
        Let $a_i\in L_\nu^\infty$ for $i=1,\ldots,n$ and let $b\in\mathbb{R}^n$.
        Consider the following pair of primal and dual optimization problems: 
    
        \begin{align}
            \text{(ME)}\quad 
            \left.
            \begin{array}{ll}
                \inf   & \int_{E} f(x) \lnr{f(x)} \d{}\nu(x)\,, \\ &\\
                \textit{subject to} &\int_E a_i(x) f(x) \d{}\nu(x) = b_i \,, \quad i=1,\ldots,n \,,\\ &\\
                \quad&f \geq 0 \textit{,} \quad f\in L_\nu^1 \textit{.}
            \end{array}
            \right.
        \end{align}

        \begin{align}
            \text{(DME)}\quad 
            \left.
            \begin{array}{ll}
           \max & \vec{\lambda}^{T} b - \int_{E} \expr{ 1 + \sum_{i=1}^n \lambda_i a_i(x)} \d{}\nu(x) \,,\\ &\\
            \textit{subject to} & \vec{\lambda}\in\mathbb{R}^n \,. 
            \end{array}
            \right.
        \end{align}

        Suppose that there exists a feasible \(f\) for \((\text{ME})\) such that \(f>0\) \(\nu\)-almost everywhere and \(\int_{E} f(x) \lnr{f(x)} \d{}\nu(x)<\infty\). 
        Then:
        \begin{enumerate}
            \item The optimal values of \((\text{ME})\) and \((\text{DME})\) coincide, and both the infimum in \((\text{ME})\) and  supremum in \((\text{DME})\) are attained.
            \item The unique minimizer \(f^\star\) of \((\text{ME})\) is given by 
            \begin{equation}\label{eq: ME solution form}
            f^\star(x) = \expr{-1 + \sum_{i=1}^n \overline{\lambda}_i a_i(x)} \,,
        \end{equation}
        where \(\overline{\lambda}\in\mathbb{R}^n\) is the optimal dual variable of \((\text{DME})\).
        \end{enumerate}
    \end{theo}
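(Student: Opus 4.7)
The plan is to treat the statement as a direct instance of the partially finite convex programming duality developed by \cite{borwein_partially_1992}, with integrand the negative entropy $\phi(t) = t \ln t$ on $[0,\infty)$ (extended by $+\infty$ on $(-\infty,0)$ and with $\phi(0) = 0$ by convention). A routine calculation gives the Fenchel conjugate $\phi^\star(s) = e^{s-1}$, with the supremum $\sup_{t\geq 0}(st - \phi(t))$ attained uniquely at $t = e^{s-1}$. The linear constraint map $T : L^1_\nu \to \mathbb{R}^n$ defined by $(Tf)_i = \int_E a_i(x) f(x) \d{}\nu(x)$ is bounded because each $a_i \in L^\infty_\nu$, so the primal $(\text{ME})$ fits the abstract template: minimize the integral functional $f \mapsto \int_E \phi(f)\d{}\nu$ over the affine slice $\{f \in L^1_\nu : Tf = b,\ f \geq 0\}$.

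The second step is to verify the constraint qualification, which in the Borwein--Lewis framework asks for a feasible point in the quasi-relative interior of the effective domain of the integral functional. The hypothesis furnishes a feasible $f_0$ that is strictly positive $\nu$-almost everywhere and satisfies $\int \phi(f_0)\d{}\nu < \infty$. Strict positivity places $f_0$ in the quasi-interior of the cone $\{f \in L^1_\nu : f \geq 0\}$, while finite entropy ensures $f_0$ lies in the effective domain of the objective; this is precisely the Slater-type hypothesis their theorem requires. Invoking the resulting strong duality theorem yields assertion~1: the optimal values of $(\text{ME})$ and $(\text{DME})$ coincide and are attained on both sides.

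For assertion~2, I would combine Rockafellar's interchange of integration and infimum with Fenchel equality at the primal--dual optimum. For fixed $\lambda \in \mathbb{R}^n$, pointwise minimization of $\phi(f(x)) - f(x)\sum_i \lambda_i a_i(x)$ over $f(x) \geq 0$ produces the dual functional stated in $(\text{DME})$ (up to the sign convention inside the exponent). At a primal minimizer $f^\star$ and the optimal dual $\overline{\lambda}$, Fenchel's equality $\phi(f^\star(x)) + \phi^\star\bigl(\sum_i \overline{\lambda}_i a_i(x)\bigr) = f^\star(x)\sum_i \overline{\lambda}_i a_i(x)$ must hold $\nu$-almost everywhere, forcing $f^\star(x) = (\phi^\star)'\bigl(\sum_i \overline{\lambda}_i a_i(x)\bigr) = \expr{-1 + \sum_i \overline{\lambda}_i a_i(x)}$; uniqueness follows from strict convexity of $\phi$ on $(0,\infty)$. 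The main obstacle is the precise matching of the abstract quasi-relative-interior condition of Borwein--Lewis to the concrete hypothesis stated here: in particular, confirming that strict positivity together with finite entropy suffices to rule out a duality gap when $\nu$ is only $\sigma$-finite, and checking that the candidate optimizer $\expr{-1 + \sum_i \overline{\lambda}_i a_i}$ is automatically $\nu$-integrable once $\overline{\lambda}$ is dual-feasible.
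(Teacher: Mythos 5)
Your proposal is correct and follows essentially the same route as the paper: both reduce the statement to the partially finite convex duality of Borwein and Lewis with integrand $\phi(t)=t\ln t$, using the strictly positive finite-entropy feasible point as the constraint qualification for assertion~1 and the conjugate/supercoercivity structure of $\phi$ (equivalently, Fenchel equality at the optimum) for the closed form in assertion~2. The paper simply cites Corollary~2.6 and Theorem~4.8 of that reference where you unpack the mechanism; your parenthetical about the sign convention in the exponent of $(\text{DME})$ is well taken, since the stated dual objective and the stated minimizer are consistent only up to that sign.
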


    \begin{proof}
        The first statement follows from \citet[Corollary 2.6.]{borwein_duality_1991}, applied to the measured space $(E,\mathcal{E},\nu)$ and the strictly convex function $x\ln x$ on $[0,\infty[$. 
        The second statement follows from  Theorem 4.8. in the same reference, using the fact that $\lim_{x\to\infty}\frac{x\ln x}{x}=\infty$.
    \end{proof}

\section{Bayes' Rule}\label{section: Bayes rule for null measure event}

    In this section, we present a method for deriving a Bayesian posterior on a separable metric space using the Maximum Entropy Principle (MaxEnt). Our approach is broadly applicable and relies solely on the metric structure of standard Borel spaces, thereby providing a general framework for Bayesian inference across a wide class of probabilistic models.

     We begin in Section~\ref{subsec:solution-sketch} with a conceptual overview of the method, highlighting the intuition behind our definition. 
     In Section~\ref{subsec: Bayes rule for Hausdorff space}, we establish that the MaxEnt-based Bayesian map is well-defined (existence and uniqueness). 
     Section~\ref{subsec: Bayes rule as subcase of MaxEnt posterior} demonstrates that this map coincides with the classical Bayes rule when the conditioning event has positive measure. 
     Section~\ref{subsec: observation of a random variable and Bayes Posterior} verifies that our definition satisfies the properties expected of Bayesian inference in probabilistic modeling, particularly in scenarios involving the observation of a random variable. 
     Additionally, Section~\ref{subsec: observation of a random variable and Bayes Posterior} provides an explicit formula for the Bayesian map when the prior is absolutely continuous with respect to the Lebesgue measure and the topology is Euclidean. 
     This yields the canonical formula \eqref{eq: Baye's formula for lebesgue}, thereby establishing a rigorous mathematical connection between the classical conditional probability \eqref{def: conditional probability} and the canonical Bayesian formula \eqref{eq: Baye's formula for lebesgue} commonly used in practice.

\subsection{Solution Sketch}\label{subsec:solution-sketch}

    We illustrate our method using a canonical example in which the conditioning event corresponds to the observation of a Lebesgue-continuous random variable. 
    Consider a pair of random variables \(X \in \mathbb{R}^p, Y\in \mathbb{R}^q\), with a joint prior measure \(p(x,y)\d x \d y\), where \(\d{x}\d{y}\) denotes the Lebesgue measure on \(\mathbb{R}^{p+q}\). 
    The canonical Bayes rule for models absolutely continuous with respect to the Lebesgue measure defines the marginal posterior for the random variable \(A\)  as:

    \begin{equation}\label{eq: Baye's formula for lebesgue}
         p(x\mid \hat{y}) \d x \defeq \frac{p(x,\hat{y})}{\int_{\mathbb{R}^p} p(x,\hat{y}) \d x}  \d x\, ,
    \end{equation}

    where $\hat{y}$ is the observed value of $Y$.

    Our goal is to derive equation \eqref{eq: Baye's formula for lebesgue} from the MaxEnt framework by constraining the posterior measure to be supported on the set \(\mathbb{R}\times \{\hat{y}\}\). 
    However, the MaxEnt optimization problem on \(\mathbb{R}\times \{\hat{y}\}\) is not well-defined, because \(\mathbb{R}\times \{\hat{y}\}\) has zero Lebesgue measure and MaxEnt requires the posterior to be absolutely continuous with respect to the prior (Section~\ref{subsec:MAxEnt}).

    To derive \eqref{eq: Baye's formula for lebesgue}, we relax the linear constraint \(\E{\mu}{\mathds{1}_{A}}=1\) (see \eqref{eq: simple case condition  2} of Theorem~\ref{theo: deriving conditional proba from MaxEnt}) with the following constraint: 
    To circumvent this issue, we relax the constraint \(\E{\mu}{\mathds{1}_{\mathbb{R}\times \{\hat{y}\}}}=1\) (see condition \eqref{eq: simple case condition  2} in Theorem~\ref{theo: deriving conditional proba from MaxEnt}) by imposing the following approximate constraint:
    
    \begin{equation}\label{eq: linear constraint for data observation}
        \E{\mu}{\lVert Y - \hat{y} \rVert^2} \leq \sigma^2 \, ,
    \end{equation}

    where \(\mu\) denotes the posterior measure and \(\lVert\cdot \rVert\) is the Euclidean norm. 
    We build a sequence \((\mu_n)_{n\in\mathbb{N}}\) of solutions to MaxEnt problems subject to constraint \eqref{eq: linear constraint for data observation}, with \(\sigma_n \to 0\). 
    
    By assertion \eqref{eq: norm 2 tend to 0 implies delta convergence}, the marginal distribution of \(Y\) under \(\mu_n\) converges to a Dirac measure at \(\hat{y}\):

    \begin{equation}\label{eq: norm 2 tend to 0 implies delta convergence}
        \left( \E{(X_n,Y_n)\sim \mu_n}{\lVert Y_n - \hat{y} \rVert^2} \, \xrightarrow[n \to \infty]{} \, 0 \right) \implies \left( Y_n \, \xrightarrow[n \to \infty]{\textit{weakly}} \, \delta_{\hat{y}}\right) \,.
    \end{equation}

    Our method consequently reproduces the constraint \(\E{\mu}{\mathbb{R}\times \{\hat{y}\}}=1\) through an asymptotic behaviour.

\subsection{Defining Bayesian Map on Metric Space}\label{subsec: Bayes rule for Hausdorff space}

    In this section, we define a Bayesian inference map derived from the Maximum Entropy Principle. 
    To adapt MaxEnt to Bayesian inference, we must carefully select linear constraints that ensure the posterior measure is supported on the conditioning set \(A\), as is expected from Bayes’ rule.
    
    We begin with a characterization of support inclusion via a linear constraint:
    
    \begin{lem}\label{lem:distance contraint equiv to support inclusion}

        Let \(A\) be a closed subset of a measurable metric space \((E,d,\mathcal{B}(E))\) and let \(\mu\) be a Radon probability measure. 
        Then:
        \begin{align}\label{eq:distance contraint equiv to support inclusion}
            \int_E d^2(x,A) \d{\mu(x)} = 0 \iff \text{Supp}(\mu) \subset A \,.
        \end{align}
    \end{lem}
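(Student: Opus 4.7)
\medskip

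The plan is to prove the two implications separately, using the key observation that since $A$ is closed, the function $x \mapsto d^2(x,A)$ is continuous, non-negative, and satisfies $d^2(x,A) = 0$ if and only if $x \in A$. So the integral condition will translate directly into a statement about $\mu(A^c)$, which we then relate to the support.

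For the $(\Leftarrow)$ direction, I would first establish the standard fact that a Radon probability measure assigns full mass to its support, i.e.\ $\mu(E \setminus \mathrm{Supp}(\mu)) = 0$. The argument: the open set $U = E \setminus \mathrm{Supp}(\mu)$ is, by definition of the support, a union of open sets of $\mu$-measure zero. By inner regularity of the Radon measure $\mu$, $\mu(U) = \sup\{\mu(K) : K \subset U \text{ compact}\}$; each such compact $K$ is covered by finitely many of the defining open null sets, so $\mu(K) = 0$, hence $\mu(U) = 0$. Assuming $\mathrm{Supp}(\mu) \subset A$, this gives $\mu(A^c) = 0$, and since $d^2(x,A)$ vanishes on $A$, the integral is zero.

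For the $(\Rightarrow)$ direction, since $d^2(\cdot,A) \geq 0$ is Borel measurable, the vanishing of the integral forces $d^2(x,A) = 0$ for $\mu$-almost every $x$. Because $A$ is closed, this set coincides with $A$, so $\mu(A^c) = 0$. I will then show $\mathrm{Supp}(\mu) \subset A$ by contradiction: if some $x_0 \in \mathrm{Supp}(\mu) \setminus A$ existed, then by closedness of $A$ there would be an open ball $B(x_0,\varepsilon) \subset A^c$, and by the defining property of the support every open neighborhood of $x_0$ has strictly positive $\mu$-measure, so $\mu(A^c) \geq \mu(B(x_0,\varepsilon)) > 0$, a contradiction.

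The only mildly subtle point is the lemma $\mu(\mathrm{Supp}(\mu)^c) = 0$ in the $(\Leftarrow)$ direction, which genuinely uses the Radon hypothesis (a general Borel probability on an arbitrary metric space need not charge its support fully without some regularity or separability). Everything else reduces to the elementary characterization $\{d(\cdot,A) = 0\} = A$ for closed $A$ and the definition of support in terms of open null sets.
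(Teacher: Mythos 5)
Your proof is correct, and it follows the same overall route as the paper's: both directions hinge on the closedness of $A$ (so that $d^2(\cdot,A)$ vanishes exactly on $A$ and every point of $A^\complement$ has a ball on which $d^2(\cdot,A)$ is bounded below) together with the characterization of $\mathrm{Supp}(\mu)$ via open null sets. The one genuine difference is in how the Radon hypothesis is deployed for the $(\Leftarrow)$ direction. The paper simply asserts that any open $U\subset E\setminus\mathrm{Supp}(\mu)$ satisfies $\mu(U)=0$ and then invokes outer regularity to extend to Borel subsets of $A^\complement$; but that assertion is precisely the non-trivial point, since the complement of the support is a (possibly uncountable) union of open null sets. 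You instead prove the needed sublemma $\mu(E\setminus\mathrm{Supp}(\mu))=0$ via inner regularity: every compact subset of $E\setminus\mathrm{Supp}(\mu)$ is covered by finitely many of the defining open null sets. This is the standard argument, it is where the Radon hypothesis is genuinely used (as you correctly flag), and it fills in a step the paper leaves implicit; your version is, if anything, the more complete of the two.
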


    \begin{proof}
        Suppose \(\int_E d^2(x,A) \d{\mu(x)} = 0\). 
        As \(x\mapsto d^2(x,A)\leq 0\) is continuous and \(E\) is equipped by the Borel \(sigma\)-algebra \(\mathcal{B}(E)\), the definition of the integral enables us to state:

        \begin{align}
            \int_E d^2(x,A) \d \mu(x) = 0 \iff  \forall U \text{ open subset such that } \inf_{x\in U} d^2(x,A) > 0 \,, \quad \text{then} \; \mu(U) = 0\,.
        \end{align}

        Therefore, the points \(x\) such that \(d^2(x,A)>0\) are included in \(E/\text{Supp}(\nu)\), and as \(A\) is closed we conclude \(A^{\complement}\subset E/\text{Supp}(\nu)\). 
        
        The other implication follows from the fact that for any open set \(U\subset A^\complement\), we have \(U \subset E/\text{Supp}(\nu)\), so \(\mu(U)=0\).
        Due to the outer regularity of Radon measure, we extend the previous fact to any Borel set  \(B\) included in  \(A^\complement\) by 

        \begin{align}
            \mu(B) = \inf\left\{ \mu(U\cap A^\complement) \; : \; U \text{ open set such that } B \subset U \right\} = 0\,. 
        \end{align}   
    \end{proof}

    The equivalence \eqref{eq:distance contraint equiv to support inclusion} allows us to enforce support inclusion through a linear constraint.
    However, this formulation fails when \(\nu(A)=0\), as MaxEnt requires the posterior to be absolutely continuous with respect to the prior.
    
    \begin{lem}\label{lem: no direct derivation of Leb Bayes formula}
        If \(\mu\) is a solution of a well-defined MaxEnt problem (Definition~\ref{def:MaxEnt}), then \(\mu\ll\nu\).
    \end{lem}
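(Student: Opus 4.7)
The plan is to argue the contrapositive via the definition of relative entropy. Observe that Definition~\ref{def: rela-ent} was split into two cases precisely so that any measure failing to be absolutely continuous with respect to $\nu$ is automatically assigned infinite entropy: $\mu \not\ll \nu \implies \Ent{\nu}{\mu} = +\infty$. Hence the lemma reduces to showing that the unique minimizer of a well-defined MaxEnt problem $(\nu,I)$ has \emph{finite} relative entropy with respect to $\nu$.

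To that end, I would first show that if the infimum in \eqref{eq: MaxEnt problem loose formulation} equals $+\infty$, then every $\mu' \in I$ satisfies $\Ent{\nu}{\mu'} = +\infty$; consequently every element of $I$ attains the infimum, so by the uniqueness clause in the definition of ``well-defined'' we would need $I$ to be a singleton. This degenerate case—where the linear constraints already force a single measure and it happens not to be absolutely continuous with respect to $\nu$—is implicitly excluded by the intent of Definition~\ref{def:MaxEnt} (the whole point of MaxEnt is to select from a nontrivial family), and, more concretely, is incompatible with the hypothesis of Theorem~\ref{theo: dual problem of ME}, which requires the existence of a feasible measure with finite entropy. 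Ruling this case out, the infimum must be finite.

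Combining the two observations, the unique minimizer $\mu$ then satisfies $\Ent{\nu}{\mu} = \inf_{\mu' \in I} \Ent{\nu}{\mu'} < +\infty$, and by the contrapositive of the second clause of Definition~\ref{def: rela-ent} this forces $\mu \ll \nu$, as desired.

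The main (minor) obstacle is handling the degenerate singleton case cleanly. It is not formally excluded by Definition~\ref{def:MaxEnt}, so a fully watertight proof either augments the lemma's hypothesis with a nontriviality assumption (e.g.\ ``$I$ contains a measure with finite entropy'', in line with the hypothesis of Theorem~\ref{theo: dual problem of ME}) or argues that MaxEnt on a trivial constraint set returns its only member without genuinely ``minimizing'' anything and therefore lies outside the scope of the lemma. Either reading gives the two-line proof sketched above.
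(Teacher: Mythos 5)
Your proof is correct and follows essentially the same route as the paper's: both argue that a non-absolutely-continuous minimizer would force the infimum to be $+\infty$, so that every admissible measure attains it and the uniqueness required by ``well-defined'' fails. Your explicit treatment of the degenerate case where $I$ is a singleton is in fact more careful than the paper's own proof, which silently asserts that any $\mu\not\ll\nu$ is ``also a solution'' without checking membership in $I$ or excluding that edge case.
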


    \begin{proof}
        Consider a set \(A\) of \(\nu\)-null measure. Suppose that there exists \(\mu^{\star}\) solution of a well-defined MaxEnt problem \((\nu,I)\) such that \(\mu^\star(A)=1\). Then \(\mu^\star\not\ll \nu\), so \(\Ent{\nu}{\mu^\star} = \infty\). So any \(\mu\) such that \(\mu\not\ll \nu\) is also a solution of the same MaxEnt problem \((\nu,I)\), which implies that \((\nu,I)\) is not well-defined, contradicting our hypothesis. 
    \end{proof}

    Thus, constraint \eqref{eq:distance contraint equiv to support inclusion} cannot be used to obtain a posterior supported on \(A\) when \(\nu(A)=0\). 
    To address this, we introduce a relaxed constraint that enables convergence to a posterior supported on \(A\) via a limiting process.
    
    \begin{lem}\label{lem:distance contraint equiv to support inclusion even for limit}
        Let \(A\) be a closed set on a standard Borel space \((E,d)\). 
        Let  \((\mu_n)_{n\in\mathbb{N}}\) be a sequence of Radon probability measures that weakly convergent to \(\mu\). 
        Then,
        \begin{align}\label{eq:relax distance contraint equiv to support inclusion}
            \forall R>0 \,, \quad \lim_{n\to\infty} \int_E d_R^2(x,A) \d{\mu_n(x)} = 0 \iff  \text{Supp}(\mu) \subset A \,,
        \end{align}
        where \(d_R= \min(d,R)\). 
    \end{lem}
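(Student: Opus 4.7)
The plan is to exploit the fact that, in contrast to $d^2(\cdot,A)$, the truncated function $d_R^2(\cdot,A)$ is both continuous (as a composition of the $1$-Lipschitz map $x\mapsto d(x,A)$ with the continuous maps $\min(\cdot,R)$ and $t\mapsto t^2$) and bounded by $R^2$. Boundedness is precisely what lets us combine the support-inclusion characterization of Lemma~\ref{lem:distance contraint equiv to support inclusion} with the weak convergence hypothesis through the Portmanteau theorem; this is the structural reason the truncation appears in the statement.

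For the reverse implication, I would first assume $\text{Supp}(\mu)\subset A$. Since $A$ is closed, $d(x,A)=0$ on $A$, hence $d_R^2(x,A)=0$ for $\mu$-almost every $x$, which gives $\int_E d_R^2(x,A)\,\d\mu(x)=0$. Because $d_R^2(\cdot,A)$ is bounded and continuous, weak convergence $\mu_n\rightharpoonup\mu$ yields
\begin{equation*}
\int_E d_R^2(x,A)\,\d\mu_n(x) \;\xrightarrow[n\to\infty]{}\; \int_E d_R^2(x,A)\,\d\mu(x) \;=\; 0\,,
\end{equation*}
for every $R>0$.

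For the forward implication, I would fix an arbitrary $R>0$ and apply the same Portmanteau fact in reverse: the hypothesis together with the bounded continuity of $d_R^2(\cdot,A)$ gives $\int_E d_R^2(x,A)\,\d\mu(x)=0$. I would then reproduce the argument of Lemma~\ref{lem:distance contraint equiv to support inclusion} with $d_R$ in place of $d$: for any open set $U$ with $\inf_{x\in U} d(x,A)>0$ we also have $\inf_{x\in U} d_R(x,A)>0$, so $\mu(U)=0$. This shows every point outside the closed set $A$ lies in an open $\mu$-null neighborhood, hence lies in $E\setminus\text{Supp}(\mu)$, and outer regularity of the Radon measure $\mu$ (exactly as in the proof of Lemma~\ref{lem:distance contraint equiv to support inclusion}) propagates this to all Borel subsets of $A^\complement$, yielding $\text{Supp}(\mu)\subset A$.

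The only real obstacle is the interplay between the two characterizations: weak convergence requires bounded continuous test functions, while the cleanest link to $\text{Supp}(\mu)\subset A$ uses that $d(\cdot,A)$ vanishes exactly on $A$. The truncation resolves both needs simultaneously because $\{d_R(\cdot,A)=0\}=\{d(\cdot,A)=0\}=A$, so no information about the support is lost, while $d_R^2$ remains a legitimate Portmanteau test function. Everything else reduces to bookkeeping and a direct reuse of the proof of Lemma~\ref{lem:distance contraint equiv to support inclusion}.
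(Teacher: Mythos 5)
Your proof is correct and follows essentially the same route as the paper: the reverse implication is identical, and both directions hinge on the truncation making \(d_R^2(\cdot,A)\) a bounded continuous test function whose zero set is exactly the closed set \(A\). The only (harmless) difference is in the forward implication, where you first pass to the limit via \(\int_E d_R^2(x,A)\,\d{\mu_n(x)}\to\int_E d_R^2(x,A)\,\d{\mu(x)}=0\) and then localize with a Markov-type bound on \(\mu\), whereas the paper applies the Markov bound to each \(\mu_n(U)\) and then invokes the Portmanteau inequality \(\liminf_{n}\mu_n(U)\geq\mu(U)\) for open \(U\).
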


    \begin{proof}
    Consider \(R>0\). We begin by proving a first implication, suppose 

    \begin{align}\label{eq:101}
    \lim_{n\to\infty} \int_E d_R^2(x,A) \d{\mu_n(x)} = 0 \,.
    \end{align}

    As \(x\mapsto d^2(x,A)\geq 0\) is continuous and \(E\) is measured by the Borel \(\sigma\)-algebra \(\mathcal{B}(E)\), by definition of the integral

    \begin{align}
        \int_E d_R^2(x,A) \d \mu(x) = 0 \iff \forall U \text{ open set such that } \inf_{x\in U}d_R^2(x,U) >0 \; \text{ then }\; \mu(U)=0\,.
    \end{align}

    Consider \(U\) open subset such that \(\inf_{x\in U}d^2(x,A)>0\), then

    \begin{align}\label{eq:100}
        \frac{ d^2_R(x,A)}{\inf_{x\in U} d^2(x,A)} \geq 1 \quad \forall x\in U \,.
    \end{align}
    
    From \eqref{eq:100}, we can find a convergent upper bound on \(\mu_n(U)\) as follows: 

        \begin{align}
            \mu_n(U) &= \int_U 1 \d{\mu_n}(x) \\
             &\leq \int_U \left(\frac{ d^2_R(x,A)}{\inf_{x\in U} d^2(x,A)} \right)\d{\mu_n}(x)  \quad \text{due to \eqref{eq:100}}  \\
            &\leq \frac{\int_E d^2_R(x,A)\d{\mu_n}(x)}{\inf_{x\in U}d^2(x,A)} \xrightarrow[n\to\infty]{} 0 \quad \text{due to hypothesis \eqref{eq:101}} \,.
        \end{align}
         
        By definition of weak convergence \(\liminf \mu_n(U) \geq \mu(U)\), so \(\mu(U)=0\). 
        We can conclude that \(\displaystyle{\text{Supp}}(\mu)\subset A\). 
        
        We now prove the other implication. 

        Suppose \(\text{Supp}(\mu) \subset A\), using the same argument that for the proof of Lemma~\ref{lem:distance contraint equiv to support inclusion}, we have 
        
        \begin{align}\label{eq:102}
            \int_E d_R^2(x,A) \d{\mu(x)} = 0 \,.
        \end{align}

        As \(d^2_R\) is a continuous and bounded function, by definition the definition weak convergence:

        \begin{align}
            \int_E d_R^2(x,A) \d{\mu(x)} = \lim_{n\to\infty} \int_E d_R^2(x,A) \d{\mu_n(x)}   \,,
        \end{align}
        which combined with \eqref{eq:102} conclude the proof.
    \end{proof}

    This result motivates the following definition of a Bayesian posterior via MaxEnt:

    \vspace{1em}
    \begin{defi}[MaxEnt Posterior]\label{def:general bayes rule}
        Let \(R>0\). 
        Let \((E,d,\mathcal{B}(E))\) be a standard Borel space (see \cite{kechris_standard_1995}[Definition 12.5]) measured by \(\nu\) a prior Radon probability measure.
        Let \(A\subset Supp(\nu)\) be a closed subset. 
        Suppose that for all \(\sigma>0\), the MaxEnt problem:
        
        \begin{equation}\label{prob: Bayes for Hausdorff space}
             \inf_{\mu}\Big\{ \Ent{\nu}{\mu} \; :  \quad  \int_E \d{\mu(x)} = 1\,,  \; \int_E d_R(x,A)^2 \d{\mu(x)} \leq \sigma^2  \Big\} 
        \end{equation}

        is well-defined with solution \(\mu_\sigma\). 
        If \(\mu_\sigma\) converges weakly when \(\sigma\) converges to \(0\), we define the limit as the MaxEnt posterior of \((\nu, A)\) denoted \(\nu(.\mid A)\).

    \end{defi}
    \vspace{1em}

    We now establish that this definition yields a well-defined posterior measure, with the structural properties required for a coherent extension of Bayes’ rule:

    \begin{itemize}
        \item \textbf{Independence from the \(R\) Truncation}: The posterior is independent of the truncation parameter \(R>0\), as the convergence occurs at the boundary of \(A\). 
        \item \textbf{Normalization}: If the MaxEnt posterior \(\nu(\cdot \mid A)\) exists, it is a probability measure satisfying \(\nu(A \mid A) = 1\).
        \item  \textbf{Invariance under Measure-Preserving Isometries}: Since the framework is defined on standard Borel spaces, the MaxEnt posterior is invariant under transformations that preserve both the metric and the measure. 
    \end{itemize}

    We begin by verifying that the MaxEnt problem \eqref{prob: Bayes for Hausdorff space} is well-defined and expresses its closed form. 

    \begin{theo}\label{theo:general bayes rule has a coherent solution}
        Let \(R>0\).
        Let \((E,d,\mathcal{B}(E))\) be a standard Borel space measured by a Radon measure \(\nu\). 
        Let \(A \subset Supp(\nu)\) be a closed set. 
        Then the MaxEnt problem \eqref{prob: Bayes for Hausdorff space} has a unique solution of the form        
        
        \begin{equation}\label{def: pre Bayes posterior}
            \d{\mu_{a(\sigma)}(x)} \defeq \frac{\expr{- a(\sigma) d_R(x,A)^2}}{\int_{E} \expr{-a(\sigma) d_R(x,A)^2} d\nu(x)} \d{\nu(x)} \,.
        \end{equation}
 
        We refer to \eqref{def: pre Bayes posterior} as the pre-MaxEnt posterior of \((\nu,A)\).
        Moreover, if we assume \(\nu(E/A)>0\), the limit behaviour of \(\mu_{a(\sigma)}\) is characterised by the relation: \(a(\sigma) \xrightarrow[]{} \infty\) when \(\sigma \to 0\).

    \end{theo}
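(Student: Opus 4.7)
The plan is to apply Theorem~\ref{theo: dual problem of ME} after reducing to an equality-constrained problem, and then extract the asymptotics of $a(\sigma)$ via a Laplace-type estimate. I split into two regimes. If $\sigma^2 \geq \int_E d_R^2(x,A)\,\d{\nu(x)}$, then $\nu$ is feasible and has zero relative entropy by Proposition~\ref{prop: rel ent is positive}, so the unique minimizer is $\mu^\star = \nu$, matching the claimed form with $a(\sigma) = 0$. In the complementary regime $\sigma^2 < \int_E d_R^2(x,A)\,\d{\nu(x)}$, I first argue the inequality constraint is active at any minimizer: if $\int_E d_R^2\,\d{\mu^\star} < \sigma^2$, then $\mu_t := (1-t)\mu^\star + t\nu$ remains feasible for small $t > 0$, and convexity of $\Ent{\nu}{\cdot}$ together with $\Ent{\nu}{\nu} = 0 < \Ent{\nu}{\mu^\star}$ yields $\Ent{\nu}{\mu_t} < \Ent{\nu}{\mu^\star}$, a contradiction.

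With the equality $\int d_R^2\,\d{\mu} = \sigma^2$ in hand, I apply Theorem~\ref{theo: dual problem of ME} to the constraint functions $a_0 \equiv 1$ and $a_1 = d_R^2(\cdot,A)$; both lie in $L_\nu^\infty$ since $d_R \leq R$. The delicate hypothesis is the Slater-type requirement of a strictly positive feasible density of finite entropy, which I build by hand. Pick $\epsilon \in (0,\sigma)$ and let $B_\epsilon := \{x : d(x,A) < \epsilon\}$; since $A \subset \text{Supp}(\nu)$ and $B_\epsilon$ is an open neighborhood of $A$, one has $\nu(B_\epsilon) > 0$. The density $f_t := (1-t) + t\,\mathds{1}_{B_\epsilon}/\nu(B_\epsilon)$ is bounded and strictly positive $\nu$-a.e., has unit $\nu$-integral, and its distance integral varies continuously in $t \in [0,1]$ from $\int d_R^2\,\d{\nu} > \sigma^2$ at $t=0$ to a value at most $\epsilon^2 < \sigma^2$ at $t=1$; the intermediate value theorem produces a feasible $t^\star$. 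Theorem~\ref{theo: dual problem of ME} then yields a unique minimizer $f^\star(x) = \exp(-1 + \overline{\lambda}_0 + \overline{\lambda}_1 d_R^2(x,A))$; absorbing constants into the normalization and setting $a(\sigma) := -\overline{\lambda}_1$ recovers \eqref{def: pre Bayes posterior}. The sign $a(\sigma) > 0$ follows because $\lambda \mapsto \int d_R^2\,\d{\mu_\lambda}$, with $\mu_\lambda \propto e^{\lambda d_R^2}\nu$, has derivative $\Var{\mu_\lambda}{d_R^2} \geq 0$, is therefore nondecreasing, and equals $\int d_R^2\,\d{\nu} > \sigma^2$ at $\lambda = 0$.

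For the limit statement under $\nu(E \setminus A) > 0$, define $g(a) := \int d_R^2\,\d{\mu_a}$, so that $g(a(\sigma)) = \sigma^2$; the variance computation above shows $g$ is continuous and strictly decreasing. It remains to prove $g(a) \to 0$ as $a \to \infty$. For any $\epsilon \in (0,R)$, split the integral over $B_\epsilon$ and $B_\epsilon^c$: on $B_\epsilon$ one has $d_R^2 < \epsilon^2$, while on $B_\epsilon^c$ one has $e^{-a d_R^2} \leq e^{-a\epsilon^2}$. Combining with the lower bound $Z(a) \geq \nu(B_{\epsilon/2})\,e^{-a\epsilon^2/4}$ (again using $A \subset \text{Supp}(\nu)$) yields $g(a) \leq \epsilon^2 + R^2\,e^{-3a\epsilon^2/4}/\nu(B_{\epsilon/2})$, so $\limsup_{a \to \infty} g(a) \leq \epsilon^2$; letting $\epsilon \to 0$ gives $g(a) \to 0$, and strict monotonicity forces $a(\sigma) \to \infty$ as $\sigma \to 0$. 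The main obstacle is the Slater construction when $\nu(A) = 0$: a density supported on $A$ is not positive $\nu$-a.e., so one cannot feed it directly into Theorem~\ref{theo: dual problem of ME}; the mixture $f_t$ above, uniformly bounded below by $1-t > 0$ yet able to concentrate enough mass near $A$, is the device that circumvents this.
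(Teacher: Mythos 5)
Your proof is correct, and it takes a noticeably different (and in places more careful) route than the paper's. The paper establishes feasibility of \eqref{prob: Bayes for Hausdorff space} by invoking Lemma~\ref{lem:outside-of-A-measure-collapse} on the family \(\mu_a\) — which presupposes the exponential form before Theorem~\ref{theo: dual problem of ME} has been applied, and only yields feasibility ``for \(\sigma\) small enough''; it also silently replaces the inequality constraint of Definition~\ref{def:general bayes rule} by an equality and does not exhibit the strictly positive finite-entropy feasible point that Theorem~\ref{theo: dual problem of ME} requires. You handle all three points explicitly: the regime split at \(\sigma^2 = \int_E d_R^2\,\d{\nu}\), the activeness of the constraint via mixing with \(\nu\) and convexity of \(\Ent{\nu}{\cdot}\) (which also transfers uniqueness from the equality-constrained problem back to the inequality-constrained one), and the Slater point \(f_{t^\star}\) built as a bounded mixture that is uniformly positive yet concentrates near \(A\) — this last device is the genuine addition, since a density supported on \(A\) cannot serve when \(\nu(A)=0\). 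For the limit \(a(\sigma)\to\infty\), the paper argues by contradiction over three subsequence regimes (bounded, \(\to-\infty\), \(\to+\infty\)), whereas you argue directly: \(g(a)=\int d_R^2\,\d{\mu_a}\) is nonincreasing in \(a\) because \(g'=-\Var{\mu_a}{d_R^2}\), it tends to \(0\) by a Laplace estimate (essentially the computation of Lemma~\ref{lem:outside-of-A-measure-collapse}), and \(g(a(\sigma))=\sigma^2\to 0\) forces the conclusion. This exponential-family monotonicity argument is cleaner and buys you the sign \(a(\sigma)>0\) for free. The only point worth spelling out is that \emph{strict} monotonicity of \(g\) needs \(d_R^2\) to be non-constant \(\nu\)-a.e., which follows from \(A\subset\text{Supp}(\nu)\), \(A\neq\emptyset\) and \(\nu(E\setminus A)>0\); alternatively, nonincreasingness plus \(g>0\) on compacts already suffices for the limit claim.
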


    \begin{proof}
        The problem \eqref{prob: Bayes for Hausdorff space} falls under Theorem~\ref{theo: dual problem of ME} hypothesis. Indeed, problem \eqref{prob: Bayes for Hausdorff space} is equivalent to the following optimization problem over the measurable function \(f\) on \(E\):
    \begin{subequations}\label{eqs:opti-problem}
        \begin{align}
           \quad &\inf & \int_{E} f(x) \lnr{f(x)} d\nu(x) \\
            \text{(ME)}&\textit{subject to} &\int_{E} d_R(x,A)^2 f(x) d\nu(x) = \sigma^2, \;\; \int_{E} f(x) d\nu(x) = 1 \textit{,} \\
            \quad&\quad&f > 0 \textit{,} \quad f\in L_\nu^1 \textit{,}
        \end{align}
    \end{subequations}
        Due to Lemma~\ref{lem:outside-of-A-measure-collapse}, the function \(a \longmapsto \int_E d_R(x,A)^2 \d{}\mu_a(x) \) converges to \(0\) when \(a\to\infty\).
        As the previous function is also continuous, for \(\sigma\) small enough, \eqref{eqs:opti-problem} is a feasible optimization problem.

        So according to Theorem~\ref{theo: dual problem of ME}, \eqref{prob: Bayes for Hausdorff space} is well-defined and has a unique solution of the form \eqref{def: pre Bayes posterior}.

        Let us denote the set of points at a distance less that \(\sqrt{\eta}\) of \(A\) by 
        \begin{equation}\label{def:distance neighbor extension of A}
            A^\eta \defeq \left\{ x \;\; \Big| \;\; d^2(x,A) \leq \eta \right\} \,.
        \end{equation}
        
        We now prove that \(a(\sigma_n)\to\infty\) when \(\sigma_n\to 0\).
        Consider a sequence \(\sigma_n\) that converges to \(0\). 
        We use a proof by contradiction.
        \(a(\sigma_n)\) has three possible type of subsequence: a bounded subsequence, a subsequence that diverges to \(-\infty\), a subsequence that diverges to \(+\infty\).

        \subsubsection*{Case \(a(\sigma_n)\) bounded}
        
        Assume \((a(\sigma_n))_{n\in\mathbb{N}}\) bounded. 
        We select an extraction \(\varphi\) such that \(a(\sigma_{\varphi(n)})\to\Tilde{a}\in\mathbb{R}\).
        Then $\left(\mu_{a\left(\sigma_{\varphi(n)}\right)}\right)_{n\in\mathbb{N}}$  converge in $L_{\nu}^2$ to $\mu_{\Tilde{a}}$ and 

        \begin{align}
            \lim_{n\to\infty} \E{X\sim\mu_{a\left(\sigma_{\varphi(n)}\right)}}{d_R(X,A)^2} & = \E{X\sim\mu_{\tilde{a}}}{d_R(X,A)^2} > 0 \; \text{ as } A\subset\text{Supp}(\nu) \,,
        \end{align}

        which contradict Lemma~\ref{lem:distance contraint equiv to support inclusion even for limit} applied to $\left(\mu_{a\left(\sigma_{\varphi(n)}\right)}\right)_{n\in\mathbb{N}}$ as a solution of \eqref{prob: Bayes for Hausdorff space}. So \((a(\sigma_n))_{n\in\mathbb{N}}\) is unbounded.

        \subsubsection*{Case \(a(\sigma_n)\) is unbounded}

        Assume that there exists a subsequence \(a(\sigma_n)\) that converges to \(-\infty\).  
        As \(a(\sigma_n)\to -\infty\), we assume \(a(\sigma_n)<0\) to study the limit behaviour. 
        By outer regularity of \(\nu\), \(\lim_{\eta\to 0}\nu(A^{\eta,\complement}) = \nu(E/A) > 0\). 
        So we can choose an \(\eta>0\) such that \(\nu(A^{\eta,\complement})>0\). 
        By a minimisation of the distance function \(d_R(.,A)\) on \(A^{\eta/2}\), we obtain the following minimisation of the expectation of the same function as follows:

        \begin{align}
            \E{X\sim \mu_a}{d^2(X,A)} \geq  \frac{\eta}{2}  \mu_a\left(\left(A^{\frac{\eta}{2}}\right)^\complement\right) \,.
        \end{align}

        We show that the measure concentrate  on \(\left(A^{\frac{\eta}{2}}\right)^\complement\) which contradicts the hypothesis: 
        
        \begin{align}\label{eq:103}
            \lim_{\sigma\to 0}\E{X\sim \mu_{a(\sigma)}}{d^2(X,A)}=0\,.
        \end{align}
        We prove that \(\frac{\mu_{a_n}(A^{\frac{\eta}{2}})}{\mu_{a_n}\left(\left(A^{\eta/2}\right)^\complement\right)}\to 0 \) when \(n\to\infty\) which due to \(\mu_{a_n}(A^{\frac{\eta}{2}}) + \mu_{a_n}\left(\left(A^{\eta/2}\right)^\complement\right)=1\) implies that \(\mu_{a_n}\left(\left(A^{\eta/2}\right)^\complement\right)\to 1\).
        
        \begin{align}
            \frac{\mu_{a_n}(A^{\frac{\eta}{2}})}{\mu_{a_n}\left(\left(A^{\eta/2}\right)^\complement\right)}
            & = \frac{\int_{A^{\eta/2}} e^{-{a_n} d^2(x,A)}\d{\nu(x)}}{\int_{\left(A^{\eta/2}\right)^\complement} e^{-a_n d^2(x,A)} \d{\nu(x)}}\\
            & = \frac{\int_{A^{\eta/2}} e^{-{a_n} d^2(x,A)}\d{\nu(x)}}{\int_{\left(A^{\eta/2}\right)^\complement/A^{\eta,\complement}} e^{-a_n d^2(x,A)} \d{\nu(x)} + \int_{A^{\eta,\complement}} e^{-a_n d^2(x,A)} \d{\nu(x)}}\\
            & \leq   \frac{e^{-a_n \frac{\eta}{2}}\nu(A^{\eta/2})}{e^{-a_n\frac{\eta}{2}} \nu(A^{\eta/2,\complement}/A^{\eta,\complement}) + e^{-a_n\eta} \nu(A^{\eta,\complement})  } \\
            &= \frac{\nu(A^{\eta/2})}{\nu(A^{\eta/2,\complement}/A^{\eta,\complement}) + e^{-a_n\frac{\eta}{2}} \nu(A^{\eta,\complement})} \xrightarrow[n\to \infty]{} 0\,,            
        \end{align}
        
        which contradicts hypothesis \eqref{eq:103}. 
        So \(a(\sigma_n)\to\infty\) when \(\sigma_n\to 0\).    
    \end{proof}

    In Definition~\ref{def:general bayes rule}, we truncate the distance function \(x\mapsto d(x,A)^2\) to ensure that the constraint lies in \(L_\nu^\infty\), thereby enabling the application of Theorem~\ref{theo: dual problem of ME} in the proof of Theorem~\ref{theo:general bayes rule has a coherent solution}. 
    This projection from \(L_\nu^1\) to \(L_\nu^\infty\) is necessary to guarantee the existence and uniqueness of the solution for \eqref{prob: Bayes for Hausdorff space} via convex duality.
    Importantly, since the convergence of \(\mu_{a(\sigma)}\) to the MaxEnt posterior occurs at the boundary of the set \(A\), the truncation parameter \(R>0\) has no effect on the limiting posterior measure.

    \begin{prop}[Invariance of the MaxEnt Posterior under Distance Truncation]\label{prop:general bayes rule indepedant of R clamping}
        Let \(\varepsilon>0\) and \(R,R^\prime>\varepsilon\). 
        Let  \((E,d)\) be a standard Borel space measured by a Radon measure \(\nu\).
        Let \(A\subset Supp(\nu)\) be a closed set. 
        Suppose that Definition~\ref{def:general bayes rule} defines a \((\nu,A)\)-MaxEnt Posterior \(\mu^\star\) for \(R\). 
        Then Definition~\ref{def:general bayes rule} also defines a \((\nu,A)\)-MaxEnt Posterior for \(R^\prime\) and both posteriors are equal.
    \end{prop}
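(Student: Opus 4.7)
The plan is to reduce the comparison of the two limiting procedures to a comparison of the pre-MaxEnt densities indexed by the common dual parameter $a$. By Theorem~\ref{theo:general bayes rule has a coherent solution}, the correspondence $\sigma \mapsto a(\sigma)$ (for either truncation parameter) is continuous and satisfies $a(\sigma) \to \infty$ as $\sigma \to 0$ whenever $\nu(E \setminus A) > 0$; in the trivial case $\nu(A) = 1$ both procedures return $\nu$ itself. Thus it suffices to show that, writing $\mu_a^{R}$ and $\mu_a^{R'}$ for the pre-MaxEnt posteriors from \eqref{def: pre Bayes posterior} with the respective truncations, the two families share a common weak limit as $a \to \infty$. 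Once this is done, a Portmanteau argument transfers the weak limit $\mu^\star$ of $\mu_{a(\sigma)}^{R}$ to $\mu_{a(\sigma)}^{R'}$ and establishes both conclusions of the proposition.

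The key geometric observation is that, for any $\eta \in (0, \varepsilon^2)$, the truncations $d_R$ and $d_{R'}$ both agree with the untruncated distance $d(\cdot, A)$ on the neighborhood $A^\eta$ of \eqref{def:distance neighbor extension of A}, while on $A^{\eta, \complement}$ both truncated distances are bounded below by $\sqrt{\eta}$. Splitting every integral into its contributions on $A^\eta$ and on $A^{\eta, \complement}$, and using that $\nu$ is a probability measure, gives for any bounded continuous $f$ the pointwise-identical contribution on $A^\eta$ together with the uniform bound $2 \|f\|_\infty e^{-a\eta}$ on $A^{\eta, \complement}$. In particular the normalizations $Z_R(a)$ and $Z_{R'}(a)$ of \eqref{def: pre Bayes posterior} satisfy $|Z_R(a) - Z_{R'}(a)| \leq 2 e^{-a\eta}$, and the same estimate holds for the numerators $\int f \, e^{-a d_R^2(\cdot,A)} d\nu$ with a factor $\|f\|_\infty$.

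To convert these differences into an estimate on the quotients, I will bound the normalizations from below by fixing any $\eta' \in (0, \eta)$ and noting that the open set $\{x : d^2(x, A) < \eta'\}$ is a nonempty neighborhood of $A$ inside $\text{Supp}(\nu)$, hence $\nu(A^{\eta'}) > 0$, giving $Z_R(a), Z_{R'}(a) \geq e^{-a\eta'} \nu(A^{\eta'})$. Expanding $\int f \, d\mu_a^R - \int f \, d\mu_a^{R'}$ as a sum of ratios and inserting these bounds yields an inequality of the form
\begin{equation*}
\left| \int_E f \, d\mu_a^R - \int_E f \, d\mu_a^{R'} \right| \leq \frac{4 \|f\|_\infty\, e^{-a(\eta - \eta')}}{\nu(A^{\eta'})},
\end{equation*}
which vanishes as $a \to \infty$ since $\eta > \eta'$. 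The main subtlety to watch is precisely this step: when $\nu(A) = 0$ both $Z_R(a)$ and $Z_{R'}(a)$ tend to zero, so the auxiliary parameter $\eta'$ strictly smaller than $\eta$ is essential to guarantee that the exponential gain in the numerator dominates the exponential loss in the denominator. The remaining steps are routine manipulations of the explicit densities from Theorem~\ref{theo:general bayes rule has a coherent solution}.
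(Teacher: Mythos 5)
Your proof is correct and follows essentially the same route as the paper's: both reparametrize the two families by the common dual parameter $a$, observe that $d_R$ and $d_{R'}$ coincide with $d(\cdot,A)$ on a neighbourhood of $A$ where all the mass concentrates as $a\to\infty$, and conclude that expectations of bounded continuous functions share a common limit. Your two-radius estimate $4\|f\|_\infty e^{-a(\eta-\eta')}/\nu(A^{\eta'})$ is a cleaner, fully quantitative substitute for the paper's asymptotic-equivalence argument (which goes through outer regularity and a $\delta\to 0$ limit), but the underlying decomposition and conclusion are the same.
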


    \begin{proof}
        By Theorem~\ref{theo:general bayes rule has a coherent solution}, the solution of the optimization problem \eqref{prob: Bayes for Hausdorff space} for \((R,\nu,A)\) and \((R^\prime,\nu,A)\) are respectively \(\mu_{a,R}\) and \(\mu_{a,R^\prime}\) defines by \eqref{def: pre Bayes posterior}. 
        Theorem~\ref{theo:general bayes rule has a coherent solution} also state that the MaxEnt posteriors \(\mu^\star\) is the weak limit of \(\mu_{a,R}\) when \(a\to\infty\) and if \((R^\prime,\nu,A)\) has a MaxEnt posterior, the MaxEnt posterior is the weak limit of \(\mu_{a,R^\prime}\) when \(a\to\infty\).

        We want to prove that \(\mu_{a,R^\prime}\to\mu^\star\) when \(a\to\infty\).
        First, we prove that the two normalisation constant \(C_{a,R},C_{a,R^\prime}\) defined by 
        
        \begin{align}\label{eq:104}
            C_{a,r} = \int_E \expr{-ad_r(x,A)^2} \d\nu(x) \,, \quad \forall r>0\,,
        \end{align}

        are asymptotically equivalent. 

        For \(\varepsilon>0\), the integral \eqref{eq:104} is negligible on \(\left(A^\varepsilon\right)^\complement\), we have that
        
        \begin{align}\label{eq:equivalence around the support}
            C_{a,r} \sim_{a\to\infty} \int_{A^\varepsilon} \expr{-ad_r(x,A)^2} \d\nu(x) \,, \quad \forall r>0\,,
        \end{align}

        where \(A^\varepsilon\) is defined by \eqref{def:distance neighbor extension of A}.  
        Indeed, consider \(\delta>0\), then

        \begin{align}\label{eq:argumentation-start}
            \frac{\int_{\left(A^\varepsilon\right)^\complement} \expr{-ad_R(x,A)^2} \d\nu(x)}{C_a}  & =\frac{\int_{\left(A^\varepsilon\right)^\complement} \expr{-ad_R(x,A)^2} \d\nu(x)}{\int_{A^\varepsilon} \expr{-ad(x,A)^2} \d\nu(x)}\\
            &\leq \frac{e^{-a\varepsilon} \nu(A^{\varepsilon,\complement}\cap \interior{A^{(1+\delta)\varepsilon}}) + e^{-a (1+\delta) \varepsilon} \nu\left(\closure{A^{(1+\delta) \varepsilon,\complement}}\right) }{e^{-a \varepsilon} \nu(A^\varepsilon)} \\
            & = \frac{ \nu(A^{\varepsilon,\complement}\cap \interior{A^{(1+\delta) \varepsilon}}) + e^{-a \delta \varepsilon} \nu\left(\closure{A^{(1+\delta) \varepsilon,\complement}}\right) }{\nu(A^\varepsilon)}\\\label{eq:argumentation-end}
            & \xrightarrow[a\to\infty]{} \frac{\nu(A^{\varepsilon,\complement}\cap \interior{A^{(1+\delta) \varepsilon}})}{\nu(A^\varepsilon)} \,,
        \end{align}
        
        where \(\interior{A}\) is the interior of \(A\) and \(\closure{A\\}\) is the closure of \(A\).
        As \(\nu\) is outer regular \(\nu(A^{\varepsilon,\complement} \cap \interior{A^{(1+\delta) \varepsilon}})\to 0\) when \(\delta\to 0\).
        By choosing \(\varepsilon < R, R^\prime\) we can conclude that \(C_{a,R}\sim_{a\to\infty} C_{a,R^\prime}\). 
        
        We can now prove that \(\mu_{a,R^\prime}\) is convergent and converges to the same limit than \(\mu_{a,R}\).
        Consider \(f\) a continuous and bounded real-valued function of \((E,d)\). On \(A^\varepsilon\), \( C_{a,R} f\d\mu_{a,R^\prime}= C_{a,R^\prime}f\d\mu_{a,R}\), so 

        \begin{align}
            \int_{A^\varepsilon} f\d\mu_{a,R}  \sim_{a\to\infty} \int_{A^\varepsilon} f\d\mu_{a,R^\prime} \,.
        \end{align}

        On \(\left(A^\varepsilon\right)^\complement\), the two integrals are negligible compare to the integrals on \(E\). 
        Indeed, as \(f\) is bounded, we can prove that 
        
        \begin{align}
            \int_{E} f\d\mu_{a,r}  \sim_{a\to\infty} \int_{A^\varepsilon} f\d\mu_{a,r}  \,, \quad \forall r>0\,,
        \end{align}
            
        with the same argument that we used to prove \eqref{eq:equivalence around the support}.
        So, we can deduce that

        \begin{align}
            \int_E \ f\d\mu_{a,R} \sim_{a\to\infty}  \int_{A^\varepsilon}  f\d\mu_{a,R} \text{ and } \int_E  f\d\mu_{a,R^\prime} \sim_{a\to\infty}  \int_{A^\varepsilon}  f\d\mu_{a,R^\prime} \,.
        \end{align}

        which implies that \(\lim_{a\to\infty} \E{\mu_{a,R}}{f} =\lim_{a\to\infty} \E{\mu_{a,R^\prime}}{f}\).
        As by hypothesis  \(\mu_{a,R}\xrightarrow[a\to\infty]{weak}\mu^\star\), we can conclude that \(\mu_{a,R^\prime}\xrightarrow[a\to\infty]{weak}\mu^\star\).

    \end{proof}

    While Lemma~\ref{lem:distance contraint equiv to support inclusion even for limit} guarantees that the support of the limiting measure is contained within \(A\), it does not, by itself, ensure that the MaxEnt posterior assigns zero mass to all measurable subsets of \(A^\complement\). 
    Proposition~\ref{prop:general bayes rule is supported on A}  strengthens this conclusion by establishing that the MaxEnt posterior is not only supported on \(A\), but also null outside of it.

    \begin{prop}[MaxEnt Posterior is Null Outside of the Conditionalization Set]\label{prop:general bayes rule is supported on A}
        Let \((E,d,\mathcal{B}(E))\) be a standard Borel space measured by a Radon probability measure \(\nu\).
        Let \(A\) be a closed set. 
        Suppose that the MaxEnt posterior \(\nu(.\mid A)\) exists. Then \(\nu(.\mid A)\) is a Radon probability measure and \(\nu(A^\complement \mid A)=0\).
    \end{prop}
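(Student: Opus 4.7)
The plan is to handle the two assertions of the proposition separately: that $\nu(\cdot \mid A)$ is a Radon probability measure, and that $\nu(A^\complement \mid A) = 0$. The first reduces to a general Polish-space regularity result, while the second will be obtained by combining Lemma~\ref{lem:distance contraint equiv to support inclusion even for limit} with a classical support identity on second countable spaces.

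For the first claim, I would observe that $\nu(\cdot \mid A)$ is by construction the weak limit of the probability measures $\mu_{a(\sigma)}$. Testing the weak-convergence definition against the bounded continuous function $f \equiv 1$ gives $\nu(E \mid A) = 1$, so the weak limit is a Borel probability measure. Since a standard Borel space equipped with its compatible metric is Polish, Ulam's theorem guarantees that every Borel probability measure on $E$ is tight, hence Radon, which settles the first claim without further work.

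For the second claim, I would apply Lemma~\ref{lem:distance contraint equiv to support inclusion even for limit} to a sequence $(\mu_{a(\sigma_n)})_n$ with $\sigma_n \to 0$. The feasibility constraint in problem~\eqref{prob: Bayes for Hausdorff space} gives $\int_E d_R^2(x, A) \, d\mu_{a(\sigma_n)}(x) \leq \sigma_n^2 \to 0$, so the hypothesis of the lemma is met and yields $\text{Supp}(\nu(\cdot \mid A)) \subset A$. To upgrade this support inclusion into a null-mass statement, I would use that for a Borel measure on a second countable space the complement of the support has measure zero: the uncountable union of open null sets defining $E \setminus \text{Supp}$ reduces via a countable basis to a countable union, and is therefore itself null. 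Since $A$ is closed, $A^\complement$ is open and contained in $E \setminus \text{Supp}(\nu(\cdot \mid A))$, so $\nu(A^\complement \mid A) = 0$.

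The step I expect to require most care is precisely this last conversion from support inclusion to null mass on $A^\complement$, which tacitly uses the second countability of the Polish metric space. Beyond this classical ingredient, the argument is essentially bookkeeping: the preceding lemma supplies the geometric content of the convergence, and the Polish structure automatically elevates any weak limit of probability measures to a Radon one.
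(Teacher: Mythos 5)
Your proof is correct and follows the same overall decomposition as the paper's: establish that the weak limit is Radon, invoke Lemma~\ref{lem:distance contraint equiv to support inclusion even for limit} to get $\text{Supp}(\nu(\cdot\mid A))\subset A$, and then convert the support inclusion into the null-mass statement. The differences lie in the two auxiliary ingredients. For Radon-ness the paper applies the Riesz--Markov--Kakutani representation theorem to the limiting functional on $\mathcal{C}_K(E)$, whereas you invoke Ulam's theorem (every Borel probability measure on a Polish space is tight, hence Radon) after checking normalization against $f\equiv 1$; your route is arguably more robust, since the standard RMK statement requires local compactness, which a general standard Borel / Polish space need not have, and it also delivers the total mass $1$ directly rather than leaving open the possibility of mass escaping in the limit. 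For the step ``complement of the support is null'' the paper cites Federer's Theorem 2.2.16, while you give the elementary Lindel\"of argument that on a second countable space the union of open null sets defining $E\setminus\text{Supp}$ reduces to a countable subunion; these are interchangeable, with yours being self-contained. One small point of care: Lemma~\ref{lem:distance contraint equiv to support inclusion even for limit} is stated with a quantifier over all $R>0$, but its proof of the forward implication works for each fixed $R$, so your verification of the hypothesis only for the $R$ appearing in problem~\eqref{prob: Bayes for Hausdorff space} is sufficient, exactly as in the paper's implicit use.
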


    \begin{proof}
        By Theorem~\ref{theo:general bayes rule has a coherent solution}, \(\nu(.\mid A)\) is the weak limit of \(\mu_a\) \eqref{def: pre Bayes posterior} when \(a\to\infty\).
        By the Riesz–Markov–Kakutani \citep{rudin_real_2013}[Theorem 2.14] representation theorem applied to the positive linear functional:
        
        \begin{align}
            l : f \in \mathcal{C}_K(E) \mapsto \lim_{a\to\infty} \int_E f(x) \frac{\expr{-a(\sigma) d_R(x,A)^2}}{\int_{E} \expr{- a(\sigma) d_R(x,A)^2} \d\nu(x)} \d{\nu(x)}\,,
        \end{align}

        \(\nu(.\mid A)\) is a Radon measure. 
        \cite{federer_geometric_1996}[Theorem 2.2.16.] ensures that \(\nu(\text{Supp}(\mu)^\complement \mid A)=0\). 
        Lemma~\ref{lem:distance contraint equiv to support inclusion even for limit} proves us that \(A^\complement \subset \text{Supp}(\mu)\). 
        We conclude that \(\nu(A^\complement\mid A)=0\).
    \end{proof}

    The MaxEnt posterior is invariant under measure-preserving isometry. 
    This invariance \textemdash{}requiring preservation of both the metric and the measure\textemdash{} demonstrates that the MaxEnt posterior is not purely based on measure theory, but is intrinsically both topological and probabilistic.
    
    \begin{prop}[Invariance by Measure-Preserving Isometry]\label{prop:invariance-by-homeomorphism}
        Let \((E,d)\), \((\widetilde{E},\widetilde{d})\) be two standard Borel space isometric through the mapping \(h:E\mapsto\widetilde{E}\). 
        Let \(\nu\) be a Radon probability measure on the \((E,d)\).

        Denote \(h_{\#}\nu\) the push-forward measure of \(\nu\) on \((\widetilde{E},\widetilde{d})\) defined by:
        \begin{align}\label{def:push-forward-measure}
            h_{\#}\nu(B) = \nu(h^{-1}(B)) \quad \forall B\in \mathcal{T}_{\widetilde{d}}\,.
        \end{align}

        Then, for all closed set \(A \subset \text{Supp}(h_{\#}\nu)\) for which the \((\nu,h^{-1}(A))\)-MaxEnt posterior  is correctly defined on \((E,d)\), the \((A,h_{\#}\nu)\)-MaxEnt posterior is also correctly defined on \((\widetilde{E},\widetilde{d})\).
        Moreover, both MaxEnt posteriors are related by
        
        \begin{align}
            h_{\#}\nu(B \mid A) = \nu( h^{-1}(B) \mid h^{-1}(A)) \,,  \quad \forall B\in \mathcal{T}_{\widetilde{d}}\,.
        \end{align}
    \end{prop}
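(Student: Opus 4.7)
The plan is to exploit the explicit form \eqref{def: pre Bayes posterior} of the pre-MaxEnt posterior provided by Theorem~\ref{theo:general bayes rule has a coherent solution}, show that the pre-MaxEnt posteriors on the two isometric spaces are related by push-forward, and then pass to the weak limit. The central observation, immediate from $h$ being a bijective isometry, is
\begin{equation*}
d_R(x, h^{-1}(A)) = \widetilde{d}_R(h(x), A), \quad \forall x \in E,
\end{equation*}
together with the fact that $h^{-1}(A)$ is closed in $E$ and satisfies $h^{-1}(A) \subset \text{Supp}(\nu)$, since $h$ is a homeomorphism and therefore $h(\text{Supp}(\nu)) = \text{Supp}(h_\#\nu)$.

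First, I would write down the pre-MaxEnt posterior $\widetilde{\mu}_a$ of $(h_\#\nu, A)$ on $(\widetilde{E}, \widetilde{d})$ using \eqref{def: pre Bayes posterior} and apply the change-of-variables formula $\int g \, \d{(h_\#\nu)} = \int (g \circ h) \, \d{\nu}$. The isometry identity pulls the integrand $\expr{-a\widetilde{d}_R(\widetilde{x}, A)^2}$ back to $\expr{-a\, d_R(x, h^{-1}(A))^2}$, so the normalising constant of $\widetilde{\mu}_a$ equals that of the pre-MaxEnt posterior $\mu_a$ of $(\nu, h^{-1}(A))$. Applying the same change of variables to numerators gives, for every Borel $B \subset \widetilde{E}$,
\begin{equation*}
\widetilde{\mu}_a(B) = \mu_a(h^{-1}(B)) = (h_\#\mu_a)(B),
\end{equation*}
hence $\widetilde{\mu}_a = h_\# \mu_a$ for every $a > 0$. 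This equality also transfers the feasibility hypothesis of Theorem~\ref{theo:general bayes rule has a coherent solution} from $(\nu, h^{-1}(A))$ to $(h_\#\nu, A)$, so $\widetilde{\mu}_a$ is indeed the unique solution of the corresponding MaxEnt problem on $(\widetilde{E}, \widetilde{d})$.

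Second, by hypothesis $\mu_a$ converges weakly to $\nu(\cdot \mid h^{-1}(A))$ as $a \to \infty$. Since $h$ is a homeomorphism, any continuous bounded $f : \widetilde{E} \to \mathbb{R}$ yields a continuous bounded $f \circ h$ on $E$, so
\begin{equation*}
\int f \, \d{\widetilde{\mu}_a} = \int (f \circ h) \, \d{\mu_a} \xrightarrow[a\to\infty]{} \int (f \circ h) \, \d{\nu(\cdot \mid h^{-1}(A))} = \int f \, \d{\bigl(h_\# \nu(\cdot \mid h^{-1}(A))\bigr)}.
\end{equation*}
Thus $\widetilde{\mu}_a$ converges weakly, and by Definition~\ref{def:general bayes rule} the $(h_\#\nu, A)$-MaxEnt posterior exists and equals $h_\#\bigl(\nu(\cdot \mid h^{-1}(A))\bigr)$, which is exactly the stated identity.

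I expect no serious obstacle: the entire argument is bookkeeping around the change-of-variables formula, and the two slightly delicate points, namely transferring the feasibility and well-posedness of Theorem~\ref{theo:general bayes rule has a coherent solution} across $h$ and preserving weak convergence under push-forward by a homeomorphism, are both immediate consequences of $h$ being a bijective isometry.
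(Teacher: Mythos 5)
Your proposal is correct and follows essentially the same route as the paper's proof: both establish the isometry identity $d_R(x,h^{-1}(A)) = \widetilde{d}_R(h(x),A)$, use the change-of-variables formula to identify the pre-MaxEnt posteriors (equal normalising constants, $\widetilde{\mu}_a = h_\#\mu_a$), and then pass to the weak limit against continuous bounded test functions. Your version is if anything slightly cleaner — you state $\widetilde{\mu}_a = h_\#\mu_a$ directly and explicitly record that $h^{-1}(A)\subset\text{Supp}(\nu)$ and that well-posedness transfers, where the paper instead splits $f$ into $f_+ - f_-$ to justify the same change of variables.
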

 
    \begin{proof}
        Consider a closed set \(A\subset \text{Supp}(h_{\#}\nu)\) such that the \((\nu,h^{-1}(A))\)-MaxEnt posterior of Definition~\ref{def:general bayes rule} is correctly defined on \((E,d)\).
        Consider the \((\nu,h^{-1}(A))\)-pre-MaxEnt posterior \(\mu_a\) and the \((A,\widetilde{d},h_{\#}\nu)\)-pre-MaxEnt posterior \(\widetilde{\mu}_a\) defined in Theorem~\ref{theo:general bayes rule has a coherent solution}.
        Consider a real-valued bounded and continuous  function \(f\) on \((\widetilde{E},\widetilde{d})\), we will show that \(\lim_{a\to\infty} \E{\widetilde{\mu}_a}{f} = \lim_{a\to\infty} \E{\mu_a}{f\circ h}\).

        By isometry of \(h\):
        \begin{align}\label{eq:105}
           \expr{-a  \, d(x,h^{-1}(A))} =  \expr{- a \,\widetilde{d}(h(x),A)} \,, \quad \forall x\in E\,,
        \end{align}

        which proves that 

        \begin{align}\label{eq: equivalence of for distance change on the constant}
            \int_E \expr{-a d(x,h^{-1}(A))} \d{\nu(x)} = \int_E \expr{-a \widetilde{d}(h(x),A)} \d{\nu(x)}\,.
        \end{align}

        By definition of the push-forward measure \(h_{\#}\nu\), we have

        \begin{align}\label{eq:pre-MaxEntPosterior constant push-forward relation}
            \int_E \expr{-a \widetilde{d}(h(x),A)} \d{\nu(x)} = \int_{\widetilde{E}} \expr{-a \widetilde{d}(x,A)} \d{h_{\#}\nu(x)}\,.
        \end{align}

        Combining \eqref{eq: equivalence of for distance change on the constant} and \eqref{eq:pre-MaxEntPosterior constant push-forward relation}, we obtain the equality:

        \begin{align}\label{eq: equivalence of pre-MaxEntPosterior constant}
            \int_{\widetilde{E}} \expr{-a \widetilde{d}(x,A)} \d{h_{\#}\nu(x)} = \int_E \expr{-a d(x,h^{-1}(A))} \d{\nu(x)}\,.
        \end{align}

        We divide \(f\) into \(f_+\) and \(f_-\) such that \(f = f_+ - f_-\) with \(f_+\) and \(f_-\) positive and bounded.
        Then we have 
        \begin{align}
           f_+\circ h(x) e^{-a  \, d(x,h^{-1}(A))} =  f_+\circ h(x) e^{- a \,\widetilde{d}(h(x),A)} \,, \quad \forall x\in E\,,
        \end{align}

        and by applying the same reasoning as before, we obtain 

        \begin{align}\label{eq:106}
            \int_{\widetilde{E}} f_+(x) e^{-a \widetilde{d}(x,A)} \d{h_{\#}\nu(x)} = \int_E f_+\circ h(x)  e^{-a d(x,h^{-1}(A))} \d{\nu(x)}\,.
        \end{align}

        Similarly, we have

        \begin{align}\label{eq:107}
            \int_{\widetilde{E}} f_-(x) e^{-a \widetilde{d}(x,A)} \d{h_{\#}\nu(x)} = \int_E f_-\circ h(x)  e^{-a d(x,h^{-1}(A))} \d{\nu(x)}\,.
        \end{align}

        As both \eqref{eq:106} and \eqref{eq:107} are finite integrals by subtraction, we have

        \begin{align}\label{eq:108}
            \int_{\widetilde{W}} f(x) e^{-a \widetilde{d}(x,A)} \d{h_{\#}\nu(x)} = \int_E f\circ h(x)  e^{-a d(x,h^{-1}(A))} \d{\nu(x)}\,.
        \end{align}

        Considering the expression \eqref{def: pre Bayes posterior} of \(\mu_a\) and \(\widetilde{\mu}_a\) with the equalities \eqref{eq: equivalence of pre-MaxEntPosterior constant} and \eqref{eq:108} enables us to conclude that 

        \begin{align}
            \lim_{a\to\infty} \E{\widetilde{\mu}_a}{f} = \lim_{a\to\infty} \E{\mu_a}{f\circ h}\,.
        \end{align}
        
    \end{proof}

    We have established that Definition~\ref{def:general bayes rule} yields a coherent formulation of the MaxEnt posterior. 
    However, its existence is not guaranteed in general. 
    We conclude this section by providing a sufficient condition for the existence of the MaxEnt posterior. 
    This condition is satisfied in all known applications of Bayesian Inference appearing in the literature.

    For a probability measures defined on a standard Borel spaces, the weak convergence topology is metrized by the L\`evy-Prokhorov metric \(d_{\text{L-P}}\), given by:
    
    \begin{align}
        d_{\text{L-P}}(\mu,\nu) = \inf\left\{ \varepsilon > 0 \mid \nu(C) \leq \mu(C^\varepsilon) + \varepsilon\,,\;\mu(C) \leq \nu(C^\varepsilon) + \varepsilon \,, \quad \forall C \text{ measurable} \right\}\,,
    \end{align}
    see \cite{kallianpur_topology_1961}[Section 6, The Prokhorov Metric, iv].
    Under the assumptions of Theorem~\ref{theo: criteria of existence}, we prove that the sequence \((\mu_a)_{a>0}\) is Cauchy with respect to \(d_{\text{L-P}}\) as \(a\to\infty\).
    Since the space of probability measures on the Borel \(\sigma\)-algebra of a complete, separable metric space is complete under the weak convergence topology, it follows that the MaxEnt posterior exists.

    \begin{theo}[Existence criteria for MaxEnt Posterior]\label{theo: criteria of existence}
        For \(C\subset A\) Borel, \(\eta>0\), we define \(C^\eta\) as:
        \begin{align}
            C^\eta := \left\{ x\in E \mid d(x,C) = d(x,A) \leq \eta \right\} \,.
        \end{align}
        Consider the setting of Definition~\ref{def:general bayes rule}. 
        Suppose \(\nu(A)=0\).
        Suppose that the family of functions
        
        \begin{equation}\label{eq: uniformly absolutely continuous}
           \{ \eta \longmapsto \nu(C^\eta) \,:\; C \subset A\,, \text{ Borel }  \}
        \end{equation}
        
        is continuously derivable.
        Then \(\mu_a\), defined at \eqref{def: pre Bayes posterior}, is Cauchy for the weak convergence topology in \(a\to\infty\).
        % Then MaxEnt posterior \(\mu^\star\) exists.
    \end{theo}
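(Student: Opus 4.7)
The plan is to establish weak convergence of $(\mu_a)_{a>0}$ as $a\to\infty$. Since the space of probability measures on a standard Borel space is complete under the Lévy--Prokhorov metric, weak convergence yields the Cauchy property. The main tool is a layer-cake identity that converts the integrals in \eqref{def: pre Bayes posterior} into one-dimensional Laplace integrals on $[0,R]$, made possible by the hypothesis that $F_C(\eta):=\nu(C^\eta)$ is $C^1$ for every Borel $C\subset A$.

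First I would rewrite the integrals appearing in \eqref{def: pre Bayes posterior} via pushforward. For any Borel $C\subset A$, the pushforward of $\nu|_{C^R}$ by $x\mapsto d(x,A)$ has distribution function $F_C$, and $F_C$ is $C^1$ by hypothesis, so
\begin{equation*}
\int_{C^R} g\bigl(d(x,A)\bigr)\,d\nu(x)=\int_0^R g(t)\,F_C'(t)\,dt
\end{equation*}
for any bounded Borel $g$. Splitting $E=A^R\sqcup A^{R,\complement}$, the contribution of $A^{R,\complement}$ to the normalizer is of order $e^{-aR^2}$, so the normalizer equals $\int_0^R e^{-at^2} F_A'(t)\,dt$ up to exponentially small terms.

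Next I would apply Laplace's method: the substitution $u=t\sqrt{a}$ together with dominated convergence (using that $F_C'$ is continuous and bounded on $[0,R]$) gives
\begin{equation*}
\int_0^R e^{-at^2} F_C'(t)\,dt\;\sim\;\frac{F_C'(0)\sqrt{\pi}}{2\sqrt{a}}\quad\text{as }a\to\infty,
\end{equation*}
whence $\mu_a(C^R)\to F_C'(0)/F_A'(0)$ for every Borel $C\subset A$. The map $\mu_\infty\colon C\mapsto F_C'(0)/F_A'(0)$ extends to a Borel probability measure on $A$: finite additivity follows from $(C_1\sqcup C_2)^R=C_1^R\sqcup C_2^R$ (up to the $\nu$-null set where the nearest-point projection $\pi_A$ is not single-valued) together with linearity of the derivative; $\sigma$-additivity follows from monotone convergence on $\{F_C\}$ using the uniformity embedded in the hypothesis.

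To promote this tubular-set convergence to weak convergence, I would use the concentration estimate $\mu_a(A^{\eta,\complement})\to 0$ from the proof of Theorem~\ref{theo:general bayes rule has a coherent solution} to confine $\int f\,d\mu_a$, for any bounded continuous $f$, to a thin tube $A^\eta$. On $A^\eta$, uniform continuity of $f$ on compacta makes $f$ close to $f\circ\pi_A$, so $\int f\,d\mu_a\approx \int_A f\,d(\pi_A)_\#\mu_a$, and the preceding step identifies the weak limit of $(\pi_A)_\#\mu_a$ as $\mu_\infty$. Hence $\mu_a\to\mu_\infty$ weakly, and $(\mu_a)$ is Cauchy for the Lévy--Prokhorov metric. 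The main obstacle is this last lifting step: one must uniformly bound the error of replacing $f$ by $f\circ\pi_A$ while simultaneously sending $a\to\infty$ and $\eta\to 0$. This requires a tightness-type argument and a disintegration of $\nu|_{A^R}$ into $\pi_A$-fibres; the existence of such a disintegration is precisely what the uniform $C^1$ hypothesis on the family $\{F_C\}$ is designed to provide, but extracting it and verifying single-valuedness of $\pi_A$ $\nu$-almost everywhere on $A^\eta$ constitute the technical core of the argument.
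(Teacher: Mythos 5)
Your first half (the reduction of $\nu(C^\eta)$ and of the integrals in \eqref{def: pre Bayes posterior} to one-dimensional Laplace integrals $\int_0^\cdot e^{-at^2}F_C'(t)\,dt$, followed by the $u=t\sqrt a$ substitution giving $\mu_a(C^\eta)\to F_C'(0)/F_A'(0)$) is exactly the paper's first half, including the use of the domination $F_C'\leq F_A'$. But your second half takes a different route from the paper and contains a genuine gap --- one you partly acknowledge yourself. You propose to build the limit measure $\mu_\infty(C)=F_C'(0)/F_A'(0)$ and then prove weak convergence by replacing $f$ with $f\circ\pi_A$ on a thin tube. Two steps here are not justified by the hypotheses. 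First, countable additivity of $C\mapsto F_C'(0)$ is not a routine monotone-convergence fact: for disjoint $C_1,C_2\subset A$ the tubes $C_1^\eta$ and $C_2^\eta$ overlap on the locus where $d(x,C_1)=d(x,C_2)=d(x,A)$, and for $C_n\uparrow C$ one only has $\bigcup_n C_n^\eta\subset C^\eta$ with possibly strict inclusion (a point can satisfy $\inf_n d(x,C_n)=d(x,A)$ without $d(x,C_n)=d(x,A)$ for any $n$), so $\nu(C_n^\eta)$ need not converge to $\nu(C^\eta)$. Second, the nearest-point projection $\pi_A$ need not exist, be single-valued, or admit a measurable selection on a general standard Borel metric space, and nothing in the hypothesis on the family \eqref{eq: uniformly absolutely continuous} supplies the disintegration of $\nu$ along $\pi_A$-fibres that your lifting step requires. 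As written, the proof does not close.

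The paper avoids both obstacles by never constructing the limit. It upgrades the pointwise convergence of $a\mapsto\mu_a(C^\eta)$ to convergence that is \emph{uniform} over all Borel $C\subset A$ and all $\eta>0$ (this is where $F_C'\leq F_A'$ is used: the Cauchy modulus of $\sqrt a\int_0^\eta e^{-at^2}F_C'(t)\,dt$ is controlled by that of the single function obtained with $C=A$, $\eta=\infty$, whose convergence is Lemma~\ref{lem: exponentiated distance convergence to delta measure}). It then verifies the L\'evy--Prokhorov Cauchy inequality $\mu_a(C)\leq\mu_{a'}(C^\varepsilon)+\varepsilon$ directly for an arbitrary Borel set $C$ by splitting it into its part far from $A$ (killed by the concentration estimate of Lemma~\ref{lem:outside-of-A-measure-collapse}), its part inside $A$ ($\mu_a$-null since $\mu_a\ll\nu$ and $\nu(A)=0$), and its part in the shell $A^\varepsilon/A$ (handled by the uniform convergence and the sandwich $C\cap A^{\varepsilon/2}\subset(C_{\varepsilon/2})^{\varepsilon/2}\subset C^\varepsilon\cap A^{\varepsilon/2}$). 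Completeness of the space of probability measures under weak convergence then yields existence of the limit without ever naming it. If you want to salvage your route, you would need to either add hypotheses guaranteeing a measurable, $\nu$-a.e.\ single-valued projection onto $A$, or switch to the paper's strategy of estimating $d_{\text{L-P}}(\mu_a,\mu_{a'})$ directly.
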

    \begin{proof}
        First, we prove that \(\left\{\mu_a(C^\eta)\right\}_{C\subset A}\) uniformly converges when \(a\to\infty\) for all \(\eta>0\).
        Consider a Borel set \(C\subset A\).
        Due to the continuous differentiability of \eqref{eq: uniformly absolutely continuous}, we can express \(\nu(C^\eta)\) as a one-dimensional Lebesgue integral:

        \begin{align}\label{eq:nu C eta rewritting}
            \nu(C^\eta) & = \int_0^{\eta} f_C(d) \d{}\mathscr{L}(d) \,,
        \end{align}

        where \(f_C >0\) is a continuous function. 
        First, remark that \(C^{\eta+h}/C^{\eta}\subset A^{\eta+h}/A^{\eta}\) which implies \(\nu(C^{\eta+h})-\nu(C^\eta)\leq \nu(A^{\eta+h})-\nu(A^\eta)\) for all \(\eta,h>0\).
        So \(f_C\leq f_A\) for all \(C\subset A\) Borel.
        Second, remark that the integral of a positive and continuous function argument only of the distance \(d\) to \(A\) can be expressed as a one dimensional Lebesgue integral using expression \eqref{eq:nu C eta rewritting}.
        Indeed, consider \(g:d \in \mathbb{R}_+\to\mathbb{R}_+\) continuous. Then

        \begin{align}
            \inf_{\eta  \leq d \leq \eta+\varepsilon} \, g(d) \nu(C^{\eta+\varepsilon}/C^\eta) & \leq  \int_{C^{\eta+\varepsilon}/C^\eta} g(d(x,A)) \d\nu(x) \leq sup_{\eta  \leq d \leq \eta+\varepsilon}g(d) \, \nu(C^{\eta+\varepsilon}/C^\eta)\, 
        \end{align}

        and as \(\lim_{\varepsilon\to 0}\frac{\nu(C^{\eta+\varepsilon}/C^\eta)}{\varepsilon} = f_C(\eta)\) and \(\lim_{\varepsilon\to 0}\inf_{\eta  \leq d \leq \eta+\varepsilon} \, g(d) = \sup_{\eta  \leq d \leq \eta+\varepsilon} \, g(d)= g(\eta)\), we have

        \begin{align}
            \eta \in \mathbb{R}_+ \longmapsto \int_{C^\eta} g(d(x,A)) \d\nu(x) \,\text{ is } \mathcal{C}^1 \text{ with derivative }  \eta \longmapsto g(\eta)f_C(\eta)\,.
        \end{align}

        This idea applied to \(\mu_a(C^\eta)\) leads to an expression as follows:
        \begin{subequations}\label{eq:mu rewritting}
        \begin{align}
            \mu_a(C^\eta) & = \int_{C^\eta} \frac{\expr{-ad_R(x,A)^2}}{\int_E \expr{-ad_R(x,A)^2} d\nu(x)} d\nu(x) \\
            & = \int_0^\eta  \frac{e^{-a d_R^2} f_C(d)}{\int_0^\infty e^{-a d_R^2} f_A(d) \d{}\mathscr{L}(d)} \d{}\mathscr{L}(d)\,.
        \end{align}
        \end{subequations}

        With both remarks, we show \(\mu_a(C^\eta)\) converges uniformly in \(\eta>0\) and \(C\subset A\) Borel when \(a\to\infty\). 
        Consider \(\varepsilon>0\). 
        By Lemma~\ref{lem: exponentiated distance convergence to delta measure} applied on \((\mathbb{R}_+, \lVert . \rVert_{\text{euclidean}},\mathscr{L})\) to the integrable function \(f_A\):

        \begin{align}\label{eq:integral form of nu C eta}
            a \longmapsto \sqrt{a} \int_0^\infty e^{-a d^2} f_A(d) \d{}\mathscr{L}(d)
        \end{align}

        converges in \(a\to\infty\) to a finite value, so \eqref{eq:integral form of nu C eta} respect a Cauchy criteria in \(a\to\infty\).
        We can consider a \(\widetilde{a}>0\) such that for all \(a,a^\prime>\widetilde{a}\), we have:
        
        \begin{align}
            \left| \sqrt{a} \int_0^\infty  e^{-a d^2} f_A(d) \d{}\mathscr{L}(d) -   \sqrt{a}\int_0^\infty  e^{-a^\prime d^2} f_A(d) \d{}\mathscr{L}(d) \right| \leq \varepsilon \,.
        \end{align}

        Consequently, for all \(C\subset A\) Borel and \(\eta>0\), \(\widetilde{a}\) is a Cauchy criteria in \(\varepsilon\) for the function \(a\mapsto \sqrt{a} \int_0^\eta  e^{-a d^2} f_C(d) \d{}\mathscr{L}(d)\) in \(a\to\infty\). 
        Indeed, consider \(C\subset A\) Borel, \(\eta>0\) and \(a,a^\prime>\widetilde{a}\), then:

        \begin{align}
            &\left|\sqrt{a} \int_0^\eta  e^{-a d^2} f_C(d) \d{}\mathscr{L}(d) -  \sqrt{a} \int_0^\eta  e^{-a^\prime d^2} f_C(d) \d{}\mathscr{L}(d) \right| \\
             &\quad =  \sqrt{a} \int_0^\eta  \left|e^{-a d^2} - e^{-a^\prime d^2} \right| f_C(x) \d{}\mathscr{L}(d)\\
            &\quad \leq \sqrt{a}  \int_0^\eta  \left|e^{-a d^2} - e^{-a^\prime d^2} \right| f_A(d) \d{}\mathscr{L}(d)\\
            &\quad =\left|\sqrt{a} \int_0^\eta  e^{-a d^2} f_A(d) \d{}\mathscr{L}(d) -   \sqrt{a} \int_0^\eta e^{-a^\prime d^2} f_A(d) \d{}\mathscr{L}(d) \right| \\
            &\quad \leq \varepsilon \,.
        \end{align}

        By appling the uniform Cauchy criteria \(\widetilde{a}\) to the expression \eqref{eq:mu rewritting} of \(\mu_a(C^\eta)\), we conclude that the functions 

        \begin{align}\label{eq:family of functions uniformely convergent}
            \{a\mapsto\mu_{a}(C^\eta)  \;:\; C\subset A \text{ Borel} \,,\; \eta>0 \}
        \end{align} 
        
        are uniformly convergent in \(a\to\infty\). 
        
        Now, we prove that the sequence of measures \((\mu_a)_{a>0}\) \eqref{def: pre Bayes posterior} is Cauchy according to the L\'evy–Prokhorov metric. 
        Consider  \(\varepsilon>0\). 
        Consider \(C\in\mathcal{B}(E)\). 
        For \(a,a^\prime>0\), we can estimate the difference between \(\mu_a(C)\) and \(\mu_{a^\prime}(C^{\varepsilon})\) according to the following subcases:

        \subsubsection*{Case \(C\subset (A^{\varepsilon})^\complement\):}
        According to Lemma~\ref{lem:outside-of-A-measure-collapse}, there is an \(\widetilde{a}_1>0\) such that for \(a,a^\prime>\widetilde{a}_1(\varepsilon)\):
        
        \begin{align}\label{eq:3.2}
         \mu_a(C)\leq \mu_{\widetilde{a}}\left((A^{\varepsilon})^\complement\right)\leq \varepsilon \leq \mu_{a^\prime}(C^\varepsilon) + \varepsilon \,.
        \end{align}

        \subsubsection*{Case \(C\subset A\):}
        Due to \(\mu_a \ll \nu\), \(\mu_a(C) \leq \mu_a(A) = 0\leq \mu_{a^\prime}(C^\varepsilon) + \varepsilon\).

        \subsubsection*{Case \(C\subset A^{\varepsilon}/A\):}
        Defines the set of points of \(C\) distant less than \(\varepsilon\) of \(A\) by:

        \begin{equation}
            C_\varepsilon = \left\{ x \in C \;\; \Big| \;\; d(x,A) < \varepsilon \right\} \,. 
        \end{equation}

        From the inclusions \(C\cap A^{\frac{\varepsilon}{2}} \subset \left(C_{\frac{\varepsilon}{2}}\right)^\frac{\varepsilon}{2} \subset C^\varepsilon\cap A^{\frac{\varepsilon}{2}}\), and
        the uniform convergence of the functions
        
        \begin{align}
            \left\{a\mapsto \mu_a\left(\left(C_{\frac{\varepsilon}{2}}\right)^\frac{\varepsilon}{2}\right) \;:\: C\subset A^{\varepsilon}/A \right\}
        \end{align} 
        in \(a\to\infty\) (due to \eqref{eq:family of functions uniformely convergent}), 
        we deduce that there is an \(\widetilde{a}_2>0\) such that for all \(C \subset A^{\varepsilon}/A\),  \(a,a^\prime>\widetilde{a}_2\):

        \begin{align}\label{eq:3.1}
            \mu_a\left(\left(C_{\frac{\varepsilon}{2}}\right)^\frac{\varepsilon}{2}\right) \leq  \mu_{a^\prime}\left(\left(C_{\frac{\varepsilon}{2}}\right)^\frac{\varepsilon}{2}\right) + \frac{\varepsilon}{2} \leq \mu_{a^\prime}\left(C^\varepsilon\cap A^{\frac{\varepsilon}{2}}\right) + \frac{\varepsilon}{2} \,.
        \end{align}
        
        So by compiling with the case \(C\subset (A^{\eta_\varepsilon})^\complement\), for \(a,a^\prime > \widetilde{a} = \max(\widetilde{a}_1(\frac{\varepsilon}{2}),\widetilde{a}_2)\) we have:

        \begin{align}
            \forall C \subset A^{\varepsilon}/A &\,,\\
            \mu_a(C) & = \mu_a(C\cap A^{\frac{\varepsilon}{2}}) + \mu_a(C\cap \left(A^{\frac{\varepsilon}{2}}\right)^{\complement}) \\  
            &\leq \mu_a\left(\left(C_{\frac{\varepsilon}{2}}\right)^\frac{\varepsilon}{2}\right) +  \mu_a(C\cap \left(A^{\frac{\varepsilon}{2}}\right)^{\complement}) \\
            & \leq   \mu_{a^\prime}(C^\varepsilon\cap \left(A^{\frac{\varepsilon}{2}}\right))  + \frac{\varepsilon}{2} + \mu_a(C\cap \left(A^{\frac{\varepsilon}{2}}\right)^{\complement}) \quad \text{due to \eqref{eq:3.1}}\\
            & \leq  \mu_{a^\prime}(C^\varepsilon\cap \left(A^{\frac{\varepsilon}{2}}\right))  + \frac{\varepsilon}{2} + \frac{\varepsilon}{2} \quad \text{due to } a>\widetilde{a}_1(\textstyle{\frac{\varepsilon}{2}}) \text{ and } \eqref{eq:3.2}\\
            % & \leq  \mu_{a^\prime}\left(\left(C_{\frac{\varepsilon}{2}}\right)^\frac{\varepsilon}{2}\right)  + \frac{\varepsilon}{4} + \frac{\varepsilon}{4}+ \frac{\varepsilon}{2} \quad \text{due to \eqref{eq:3.1}} \\
            & =  \mu_{a^\prime}(C^\varepsilon\cap \left(A^{\frac{\varepsilon}{2}}\right)) + \varepsilon \\
            &\leq \mu_{a^\prime}\left(C^\varepsilon \right) + \varepsilon \,.
        \end{align}

        So according to our three subcases, for \(a,a^\prime >\widetilde{a}\), we have \(d_{\text{L-P}}(\mu_a,\mu_{a^\prime})\leq \varepsilon\) which proves that the sequence \((\mu_a)_{a>0}\) is Cauchy according to the weak convergence topology on the space of probability measure of \((E,d)\).

    \end{proof}

    \begin{rem}
        The hypothesis on the continuous differentiability of \eqref{eq: uniformly absolutely continuous} can be weakened to absolutely continuous.  
        Indeed \eqref{eq:nu C eta rewritting} still holds due to the Lebesgue differentiation theorem and the convergence of \eqref{eq:integral form of nu C eta} in \(a\to\infty\) can be proved by a minor adaptation of Lemma~\ref{lem: exponentiated distance convergence to delta measure}. 
    \end{rem}

    \begin{rem}
        Due to the fact that under Theorem~\ref{theo: criteria of existence} hypothesis, \(\sqrt{a} \int_0^\infty e^{-a d^2} f_C(d) \d{}\mathscr{L}(d) \to \frac{\sqrt{\pi}}{2} f_C(0)\) when \(a\to\infty\), the proof of Theorem~\ref{theo: criteria of existence} also proves  for all \(C\subset A\) Borel that \(\nu(C\mid A) = \frac{f_C(0)}{f_A(0)}\) where \(f_C(0)=\frac{\d{}\nu(C^\eta)}{\d{}\eta}(0)\).
    \end{rem}

\subsection{Conditional probability as a subcase of MaxEnt posterior}\label{subsec: Bayes rule as subcase of MaxEnt posterior}

    The MaxEnt posterior defined in Section~\ref{subsec: Bayes rule for Hausdorff space} is intended as a generalization of Bayes’ rule. 
    In particular, when the conditioning set \(A \subseteq E\) satisfies \(\nu(A) > 0\), the classical conditional probability formula \eqref{def: conditional probability} is well-defined and our MaxEnt posterior must then coincide with the classical result. 

    Definition~\ref{def:general bayes rule} is to be a generalisation of the Bayes rule. 
    When \(\nu(A)>0\), the MaxEnt posterior must then always exist and be equal to the conditional probability \eqref{def: conditional probability}. 
    This section is dedicated to the proof of this statement.

    \begin{theo}\label{theo: Bayesian map is an extension of conditional probability}
        Consider a standard Borel space \((E,d,\mathcal{B}(E))\) with a Radon measure \(\nu\) and a measurable set \(A\). Consider a sequence of measures \((\mu_n)_{n\in\mathbb{N}}\) solution of \eqref{prob: Bayes for Hausdorff space} such that \(\sigma_n\to 0\).

        If \(\nu\left(A\right)>0\),

        \begin{equation}\label{eq: Bayesian map is an extension of conditional probability}
             \mu_n \xrightarrow[n\to\infty]{\text{weak}}  \frac{\mathds{1}_{A}}{\nu(A)} \nu =  \argmin_{\substack{\mu(E)=1\\\mu(A)=1}} \Ent{\nu}{\mu} \,.
        \end{equation}
    \end{theo}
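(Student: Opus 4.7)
The plan is to exploit the explicit form of the solutions $\mu_n$ furnished by Theorem~\ref{theo:general bayes rule has a coherent solution}, namely
\[
\d{\mu_n(x)} = \frac{\expr{-a_n d_R(x,A)^2}}{Z_{a_n}} \d{\nu(x)}, \qquad Z_{a_n} \defeq \int_E \expr{-a_n d_R(y,A)^2} \d{\nu(y)},
\]
together with the fact that $a_n = a(\sigma_n) \to \infty$ as $\sigma_n \to 0$ whenever $\nu(E\setminus A) > 0$. The edge case $\nu(A) = 1$ is handled separately and trivially: then $\nu$ itself is feasible with zero constraint value and zero relative entropy, so $\mu_n = \nu = \frac{\mathds{1}_A}{\nu(A)} \nu$. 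Hereafter I treat $0 < \nu(A) < 1$, so that $a_n \to \infty$.

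The workhorse is dominated convergence. Assuming $A$ closed (so that $d_R(x,A) = 0$ iff $x \in A$), one has the pointwise limit $\expr{-a_n d_R(x,A)^2} \to \mathds{1}_A(x)$ as $a_n \to \infty$. For any bounded continuous $f : E \to \mathbb{R}$, the integrand $f(x) \expr{-a_n d_R(x,A)^2}$ is dominated by the constant $\lVert f \rVert_\infty$, which is integrable against the probability measure $\nu$. Dominated convergence gives
\[
\int_E f(x) \expr{-a_n d_R(x,A)^2} \d{\nu(x)} \longrightarrow \int_A f \d{\nu}, \qquad Z_{a_n} \longrightarrow \nu(A) > 0,
\]
so that
\[
\int_E f \d{\mu_n} = \frac{\int_E f(x) \expr{-a_n d_R(x,A)^2} \d{\nu(x)}}{Z_{a_n}} \longrightarrow \frac{1}{\nu(A)} \int_A f \d{\nu} = \int_E f \d{\Bigl(\tfrac{\mathds{1}_A}{\nu(A)} \nu\Bigr)}.
\]
This establishes the weak convergence $\mu_n \xrightarrow{\text{weak}} \frac{\mathds{1}_A}{\nu(A)} \nu$.

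For the argmin identity in \eqref{eq: Bayesian map is an extension of conditional probability}, it suffices to invoke Williams' classical result, recalled in Appendix~\ref{sec: conditional proba}: when $\nu(A) > 0$, the unique minimizer of $\Ent{\nu}{\mu}$ subject to $\mu(E) = 1$ and $\mu(A) = 1$ is precisely the classical conditional probability $\frac{\mathds{1}_A}{\nu(A)} \nu$. Combining the two yields \eqref{eq: Bayesian map is an extension of conditional probability}.

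I do not anticipate a serious obstacle: the explicit pre-MaxEnt form together with dominated convergence makes the limit almost automatic, and Williams' theorem supplies the variational characterization. The only subtle point is the closedness of $A$, which is needed to guarantee $\{d_R(\cdot,A) > 0\} = E \setminus A$ (rather than $E \setminus \closure{A}$); if $A$ is merely measurable one must either impose closedness as a standing hypothesis, as elsewhere in Section~\ref{subsec: Bayes rule for Hausdorff space}, or replace $A$ by $\closure{A}$ under the (mild) additional assumption $\nu(\closure{A} \setminus A) = 0$.
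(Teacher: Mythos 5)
Your proposal is correct and follows essentially the same route as the paper's own proof: both exploit the explicit Gibbs form of \(\mu_n\) from Theorem~\ref{theo:general bayes rule has a coherent solution} with \(a_n\to\infty\), show \(Z_{a_n}\to\nu(A)>0\), pass to the limit against bounded continuous \(f\) by dominated convergence, and invoke Williams' result (Proposition~\ref{theo:condi-proba-from-NaxEnt-positive-case}) for the \(\argmin\) identity. Your explicit treatment of the edge case \(\nu(A)=1\) (where \(a_n\to\infty\) is not guaranteed) and of the closedness of \(A\) (needed so that \(d_R(\cdot,A)>0\) off \(A\), which the paper's outer-regularity step \(\nu(A^r)\to\nu(A)\) also tacitly requires) are minor refinements of, not departures from, the paper's argument.
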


    \begin{proof}
        The equality \(\frac{\mathds{1}_{A}}{\nu(A)} \nu =  \argmax_{\substack{\mu(E)=1\\\mu(A)=1}} \Ent{\nu}{\mu}\) is a result of \cite{williams_bayesian_1980}. We propose a condense version of this statement in Proposition~\ref{theo:condi-proba-from-NaxEnt-positive-case} of Appendix~\ref{sec: conditional proba}.
        
        We now prove the weak convergence of \eqref{eq: Bayesian map is an extension of conditional probability}. From Theorem~\ref{theo:general bayes rule has a coherent solution}, we can explicit the sequence \((\mu_n)_{n\in\mathbb{N}}\) as 

        \begin{equation}
            \mu_n(x) = C_{a_n}^{-1}  \expr{- a_n d_R(x,A)^2} d\nu(x) \text{,}
        \end{equation}

        with \(a_n\in\mathbb{R}\to \infty\), and a normalisation constant \(C_{a_n}\) defined by

        \begin{equation}
            C_{a_n} = \int_{E} e^{- a_n d_R(x,A)^2} d\nu(x) \,.
        \end{equation}

        By truncating the sequence, we consider only \(a_n>0\). 
        The normalization constant is estimated as follows:
        for \( r>0 \),

        \begin{equation}
            \begin{split}
                \mid C_{a_n} - \nu(A) \mid & = \int_{A^\complement} e^{-a_nd_R(x,A)^2} d\nu(x) \\
                                                                & = \int_{A^{r,\complement}} e^{-a_nd_R(x,A)^2} d\nu(x) + \int_{A^r/A} e^{-a_nd_R(x,A)^2} d\nu(x) \\
                                                                & \leq \nu \left( A^{r,\complement} \right) e^{- a_nr^2} + \nu\left( A^r/A \right)\\
                                                                & \xrightarrow[n\to\infty]{} \nu\left( A^r/A \right) \text{.}
            \end{split}
        \end{equation}

        As \(\nu\) is a outer regular, \(\nu\left( A^r \right)\to \nu(A) \) when \(r\to 0\), so \(C_{a_n}\to \nu(A)\). 
        We now establish the weak convergence of the sequence \((\mu_{a_n})_{n\in\mathbb{N}}\) by considering a bounded continuous function \(f\). 
        For \(x\in A^\complement\),  \( \expr{-a_n d_R(x,A)^2} \) converges to \(0\). 
        As \((C_{a_n})_{n\in\mathbb{N}}\) is convergent so it is also bounded. 
        So the function \(x\,\mapsto\,C_{a_n}^{-1} f(x)\expr{a_n d_R(x,A)^2}\) is uniformly bounded. 
        We can conclude by dominated convergence that
        
        \begin{equation}
            C_{a_n}^{-1} \int_E f(x) e^{-a_n d_R(x,A)^2} \d{\nu(x)} \xrightarrow[n\to\infty]{} \frac{1}{\nu(A)}\int_{A} f(x) d\nu(x)\text{,}
        \end{equation}

        which concludes the proof.
        
    \end{proof}

    This result confirms that the MaxEnt posterior reduces to the classical conditional probability when the conditioning set \(A\) has positive measure. 
    Moreover, since the classical conditional probability does not depend on the underlying metric, the MaxEnt posterior is likewise independent of the choice of metric in this case.

    Thus, the MaxEnt posterior \(\nu(\cdot \mid A)\) provides a coherent extension of Bayes’ rule to arbitrary measurable sets, including those of null measure : it recovers the classical formula when applicable and generalizes it in a principled way when not.

\subsection{Observation of a Random Variable}\label{subsec: observation of a random variable and Bayes Posterior}

    In this section, we demonstrate that Definition~\ref{def:general bayes rule} satisfies the requirements of probabilistic modeling when a posterior distribution must be defined following the observation of a random variable, particularly in cases where the observed event has null measure under the prior.

    In probabilistic modeling, a prior probability measure \(\nu\) over a variable of interest \(X\in\mathcal{X}\) encodes initial hypotheses. 
    Upon observing a value \(\hat{y}\) of a random variable \(Y\in\mathcal{Y}\), these hypotheses are updated by refining the joint distribution of \((X,Y)\).
    The updated posterior must reflect the fact that \(Y=\hat{y}\) is an almost sure event.
    This requirement translates into the constraint that the posterior probability measure \(\mu\) satisfies
    \begin{equation}
        \mu(\mathcal{X}\times \{\hat{y}\}) = 1 \,.
    \end{equation}
    Proposition~\ref{lem: convergence criteria to delta measure Polish space} shows that Definition~\ref{def:general bayes rule} always satisfies this constraint, even when \(\nu(\mathcal{X}\times \{\hat{y}\})=0\).

    \begin{prop}[Observation of a random variable]\label{lem: convergence criteria to delta measure Polish space}
        Consider a couple of random variables \((X,Y)\) with values in a standard Borel space \((\mathcal{X}\times\mathcal{Y},d)\), a Radon probability measure \(\nu\), and \(\hat{y}\in \textit{int}(\text{Supp}(\nu))\). 
        Set \(A=\mathcal{X} \times \{\hat{y}\}\).
        Suppose that \(d\) inherits from a metric \(d_{\mathcal{Y}}\) on \(\mathcal{Y}\) such that
        
        \begin{align}\label{eq:non-degenerated-metric}
            d_{\mathcal{Y}}(y_1,y_2) \leq d((x,y_1),(x,y_2))\,, \quad \forall x\in\mathcal{X} \,.
        \end{align}
        
        Then the sequence \((\mu_n)_{n\in\mathbb{N}}\) of the solution of \eqref{prob: Bayes for Hausdorff space} with \(\sigma_n\to 0\) induces a sequence of marginals over \(\mathcal{Y}\) that weakly converges to \(\delta_{\hat{y}}\) \eqref{eq: equivalence of shrinking variance to delta convergence}.

        \begin{equation}\label{eq: equivalence of shrinking variance to delta convergence}
            \mu_n(\mathcal{X} \times .) \xrightarrow[n\to\infty]{weak} \delta_{\hat{y}}(.) 
         \end{equation}

    \end{prop}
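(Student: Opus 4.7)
The plan is to convert the variance-type constraint in problem~\eqref{prob: Bayes for Hausdorff space} into concentration of the $\mathcal{Y}$-marginals of $\mu_n$ around $\hat{y}$, and then to invoke the Portmanteau characterization of weak convergence to a Dirac measure. First, using the non-degeneracy hypothesis~\eqref{eq:non-degenerated-metric}, I would bound the distance to $A$ from below by the distance on $\mathcal{Y}$: for every $(x,y)\in\mathcal{X}\times\mathcal{Y}$,
\begin{align}
d((x,y),A)\;=\;\inf_{x'\in\mathcal{X}}\,d((x,y),(x',\hat{y}))\;\geq\;d_{\mathcal{Y}}(y,\hat{y}),
\end{align}
so that $d_R((x,y),A)\geq \min(d_{\mathcal{Y}}(y,\hat{y}),R)$.

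Next, denote the $\mathcal{Y}$-marginal of $\mu_n$ by $\mu_n^{\mathcal{Y}}$. Integrating the previous inequality against $\mu_n$ and using the feasibility of $\mu_n$ for~\eqref{prob: Bayes for Hausdorff space},
\begin{align}
\int_{\mathcal{Y}} \min(d_{\mathcal{Y}}(y,\hat{y}),R)^2\,d\mu_n^{\mathcal{Y}}(y)\;\leq\;\int_{\mathcal{X}\times\mathcal{Y}} d_R((x,y),A)^2\,d\mu_n(x,y)\;\leq\;\sigma_n^2\;\xrightarrow[n\to\infty]{}\;0.
\end{align}
For any $0<\varepsilon<R$, Markov's inequality then yields
\begin{align}
\mu_n^{\mathcal{Y}}\bigl(\{y\in\mathcal{Y}\,:\,d_{\mathcal{Y}}(y,\hat{y})>\varepsilon\}\bigr)\;\leq\;\frac{\sigma_n^2}{\varepsilon^2}\;\xrightarrow[n\to\infty]{}\;0.
\end{align}
Hence $\liminf_n \mu_n^{\mathcal{Y}}(U)\geq 1$ for every open neighborhood $U$ of $\hat{y}$ (choose $\varepsilon$ so that $B(\hat{y},\varepsilon)\subset U$), and the Portmanteau theorem delivers $\mu_n^{\mathcal{Y}}\xrightarrow{\textit{weak}}\delta_{\hat{y}}$.

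The delicate step is the first one. As written, \eqref{eq:non-degenerated-metric} only compares points sharing the same first coordinate, which gives $d((x,y),(x,\hat{y}))\geq d_{\mathcal{Y}}(y,\hat{y})$ and controls the distance from $(x,y)$ to the single point $(x,\hat{y})\in A$, not to the infimum appearing in $d((x,y),A)$. The natural reading of \eqref{eq:non-degenerated-metric} for the present argument is the stronger statement that the canonical projection $\pi_{\mathcal{Y}}:\mathcal{X}\times\mathcal{Y}\to\mathcal{Y}$ is $1$-Lipschitz, i.e.\ $d_{\mathcal{Y}}(y_1,y_2)\leq d((x_1,y_1),(x_2,y_2))$ for all $(x_1,x_2)\in\mathcal{X}^2$; this holds in every product-type metric encountered in the paper. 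With that strengthening in place the remainder of the argument is routine Markov/Portmanteau; without it, one would instead have to disintegrate $\mu_n$ along the fibres of $\pi_{\mathcal{Y}}$ and handle the infimum over $x'$ explicitly.
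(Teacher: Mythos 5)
Your proof is correct and follows essentially the same route as the paper's: both reduce the claim to the pointwise bound \(d_R((x,y),A)\geq d_{\mathcal{Y},R}(y,\hat y)\), integrate it against the feasibility constraint \(\E{\mu_n}{d_R((X,Y),A)^2}\leq\sigma_n^2\), and conclude by a Markov-type estimate on \(\mu_n^{\mathcal{Y}}\left(B(\hat y,r)^\complement\right)\), finishing either by testing directly against bounded continuous functions (the paper) or via Portmanteau (you) — a cosmetic difference. The caveat you raise about the first step is well taken and applies equally to the paper's own proof: hypothesis \eqref{eq:non-degenerated-metric} as stated only controls \(d((x,y),(x,\hat y))\) and not the infimum over \(x'\) defining \(d((x,y),A)\), so the inequality \(\E{\mu_n}{d_{\mathcal{Y},R}(Y,\hat{y})^2} \leq \E{\mu_n}{d_R((X,Y),A)^2}\), which the paper asserts without comment, indeed requires the strengthened \(1\)-Lipschitz-projection form \(d_{\mathcal{Y}}(y_1,y_2)\leq d((x_1,y_1),(x_2,y_2))\) that you propose (and which holds for the product metrics used elsewhere in the paper, cf.\ \eqref{eq:condition on the metric}).
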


    \begin{proof}
        Consider a function \(f:\mathcal{Y}\mapsto\mathbb{R}\) bounded and continuous, then for any \(r\) such that \(0<r<R\), we have:

        \begin{equation}
            \begin{split}
                \mid \E{\mu_n}{f(Y)} - f(\hat{y}) \mid & \leq \E{\mu_n}{\mid f(Y) - f(\hat{y}) \mid \mathds{1}_{d_{\mathcal{Y},R}(Y,\hat{y})\leq r} } + \E{\mu_n}{\mid f(Y) - f(\hat{y})\mid \mathds{1}_{d_{\mathcal{Y},R}(Y,\hat{y})> r} }\\
                                                        & \leq r + 2\lVert f \rVert_{\infty} \mu_n\left(B^\complement_{\mathcal{Y}}(\hat{y},r)\right) \text{ .}
            \end{split}
        \end{equation}

        We conclude by recalling that \(\mu_n\left(B^\complement_{\mathcal{Y}}(\hat{y},r)\right) r^2 \leq \E{\mu_n}{d_{\mathcal{Y},R}(Y,\hat{y})^2}  \) which converges to \(0\) by hypothesis \(\E{\mu_n}{d_{\mathcal{Y},R}(Y,\hat{y})^2} \leq \E{\mu_n}{d_R((X,Y),A)^2} \leq \sigma^2_n \xrightarrow[n\to\infty]{} 0 \).
    \end{proof}

    \begin{rem}
        Lemma~\ref{lem:distance contraint equiv to support inclusion even for limit} ensures us that if the MaxEnt posterior \(\mu^\star\) exists: \(\mu^\star(\mathcal{X}\times \{\hat{y}\})=1\). The Proposition~\ref{lem: convergence criteria to delta measure Polish space} proves the convergence of the marginals without assuming the existence of the limiting measure.
    \end{rem}

    This result confirms that Definition~\ref{def:general bayes rule} respects the modeling requirement that the posterior assigns full mass to the observed value of the random variable, regardless of whether the prior assigns the value a positive measure.

    Under additional regularity assumptions on the prior, the posterior distribution of \(X\) can be expressed in closed-form.

    \begin{theo}[Bayes rule for Product Space]\label{theo: Bayes rule for product space}
        Let \((E,d_E)\), \((F,d_F)\) be two \(\sigma\)-compact standard Borel spaces. 
        Equip the product space \((E,d_E)\times(F,d_F)\) with a metric \(d\) satisfying:

        \begin{subequations}\label{eq:condition on the metric}
        \begin{align}
             d((x_1,y),(x_2,y))&= d_E(x_1,x_2) \quad \forall x_1,x_2 \in E\,, \; y \in F \,,\\
            d((x,y_1),(x,y_2))& = d_F(y_1,y_2) \quad \forall x \in E \,, \; y_1,y_2 \in F \,.
        \end{align}
        \end{subequations}

        Let \(\d{}\nu(x,y) = p(x,y) \d{}\mathcal{N}_E\otimes \mathcal{N}_F(x,y)\) be a prior probability measure where \(\mathcal{N}_E\) and \(\mathcal{N}_F\) are both Radon measure on \((E,d_E)\) and \((F,d_F)\), respectively, and \(p\geq 0 \) a continuous function on \((E \times F, d)\).

        Suppose $\hat{y} \in \text{int}(Supp(\nu))$.

        Then Definition~\ref{def:general bayes rule} defines the MaxEnt-Posterior of  \((\nu,E\times\{\hat{y}\})\) by:

        \begin{equation}\label{eq: bayes posterior for product space}
             \d\nu(x,y \mid E\times \{\hat{y}\}) = \frac{p(x,\hat{y})}{\int_{E}p(x,\hat{y}) \d{}\mathcal{N}_E(x)} \d{}\mathcal{N}_E \otimes \delta_{\hat{y}}(x,y)\,.
        \end{equation}
    \end{theo}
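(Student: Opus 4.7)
The plan is to invoke Theorem~\ref{theo:general bayes rule has a coherent solution} to write the pre-MaxEnt posterior in closed form and then pass to the weak limit as $a \to \infty$, exploiting the product structure of $\nu$ and the axis conditions on $d$.

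I first verify the hypotheses of Theorem~\ref{theo:general bayes rule has a coherent solution}: the $\sigma$-compact standard Borel product is standard Borel, $A = E \times \{\hat{y}\}$ is closed, $\nu$ is Radon by continuity of $p$ and the Radon property of $\mathcal{N}_E, \mathcal{N}_F$, and $A \subset \text{Supp}(\nu)$ thanks to $\hat{y} \in \text{int}(\text{Supp}(\nu))$. This yields the pre-MaxEnt posterior
\[
d\mu_a(x, y) = C_a^{-1}\, e^{-a\,d_R((x,y),A)^2}\, p(x, y)\, d\mathcal{N}_E(x)\, d\mathcal{N}_F(y).
\]
Setting $x' = x$ in the infimum defining $d((x, y), A)$ and applying the axis condition gives $d((x, y), A) \leq d_F(y, \hat{y})$; combined with the other axis condition, this places us in the setting of Proposition~\ref{lem: convergence criteria to delta measure Polish space}, ensuring that the $F$-marginal of $\mu_a$ converges weakly to $\delta_{\hat{y}}$ and that the family $(\mu_a)_{a>0}$ is tight.

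To identify the limit, I test $\mu_a$ against tensor-product functions $f = g \otimes h$ with $g \in C_b(E)$ and $h \in C_b(F)$. By Fubini and localizing the inner $y$-integral to a shrinking $d_F$-neighborhood of $\hat{y}$, continuity of $p$ and $h$ lets me replace $p(x, y)\,h(y)$ by $p(x, \hat{y})\,h(\hat{y})$ up to an error that vanishes with the neighborhood radius. The inner integral then factors as $p(x, \hat{y})\,h(\hat{y})\,J_a(x)(1 + o(1))$ with $J_a(x) := \int_F e^{-a\, d_R((x, y), A)^2}\, d\mathcal{N}_F(y)$. Writing $\mu_a(g \otimes h)$ as a ratio and passing $a \to \infty$, the $x$-dependence of $J_a(x)$ cancels between numerator and denominator via a uniform Laplace-type asymptotic, leaving
\[
\mu_a(g \otimes h)\;\longrightarrow\;h(\hat{y})\,\frac{\int_E g(x)\, p(x,\hat{y})\, d\mathcal{N}_E(x)}{\int_E p(x,\hat{y})\, d\mathcal{N}_E(x)}.
\]
Extending from tensor products to all of $C_b(E\times F)$ via tightness of $(\mu_a)_{a>0}$ and a Stone--Weierstrass density argument identifies the weak limit as the measure in \eqref{eq: bayes posterior for product space}.

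The main obstacle is establishing the cancellation of $J_a(x)$ uniformly in $x$ on compact subsets of $E$. Although the axis conditions pin down $d$ only along coordinate slices, they are enough to force $d((x, y), A)$ to be of the same order as $d_F(y, \hat{y})$ near $A$ uniformly in $x$ over compacts; the Laplace prefactor produced by the inner $\mathcal{N}_F$-integral therefore has an $x$-dependence of strictly lower order than $p(x, \hat{y})$, allowing the quotient to be evaluated by dominated convergence. This is the delicate step because the metric $d$ is not specified beyond the axis conditions, so the argument must rely on local two-sided bounds for $d((x, y), A)$ in terms of $d_F(y, \hat{y})$ rather than on any explicit product-metric identity.
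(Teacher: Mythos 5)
Your overall architecture matches the paper's: invoke Theorem~\ref{theo:general bayes rule has a coherent solution} for the closed form of the pre-MaxEnt posterior, then take a Laplace-type limit in $a$, separating the $E$- and $F$-variables by Fubini. However, there is a genuine gap at what you yourself identify as the delicate step: the cancellation of $J_a(x)=\int_F e^{-a\,d_R((x,y),A)^2}\,\mathcal{N}_F(dy)$ between numerator and denominator. You justify it by ``local two-sided bounds for $d((x,y),A)$ in terms of $d_F(y,\hat y)$'' giving the ``same order.'' This fails for two reasons. First, the axis conditions \eqref{eq:condition on the metric} only yield the upper bound $d((x,y),A)\leq d_F(y,\hat y)$ (take $x'=x$ in the infimum); they constrain $d$ only along coordinate slices and say nothing about $d((x,y),(x',\hat y))$ for $x'\neq x$, where the triangle inequality gives only $d((x,y),(x',\hat y))\geq \lvert d_E(x,x')-d_F(y,\hat y)\rvert$, which degenerates to zero. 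So no lower bound of the form $d((x,y),A)\geq c\,d_F(y,\hat y)$ is available from the hypotheses. Second, and more fundamentally, even if such a two-sided bound held, ``same order'' is not enough for a Laplace asymptotic to cancel: if $d((x,y),A)^2$ behaves like $c(x)\,d_F(y,\hat y)^2$ near $A$ with $c(x)$ non-constant, then $J_a(x)$ acquires an $x$-dependent prefactor (roughly $c(x)^{-1/2}$ in the Lebesgue-continuous case) that does \emph{not} cancel in the quotient and would reweight the $x$-marginal, contradicting \eqref{eq: bayes posterior for product space}. Laplace asymptotics are sensitive to the exact leading behaviour of the exponent at its minimum, not merely its order.

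The paper avoids this entirely by taking $d_R((x,y),E\times\{\hat y\})=d_{F,R}(y,\hat y)$, so that the exponent is independent of $x$; Fubini then factors the integrals exactly, $C_a=\int_F e^{-a d_{F,R}(y,\hat y)^2}\,\widetilde p(y)\,\mathcal{N}_F(dy)$ with $\widetilde p(y)=\int_E p(x,y)\,\mathcal{N}_E(dx)$, and a single application of Lemma~\ref{lem: exponentiated distance convergence to delta measure} (once with $\widetilde p$, once with the analogous marginal of $fp$ for arbitrary $f\in C_b(E\times F)$, which also spares the tensor-product/Stone--Weierstrass extension you need) finishes the proof. To repair your argument you must prove, not merely bound, that $d((x,y),A)=d_F(y,\hat y)$ (or at least that the ratio tends to $1$ as $y\to\hat y$, uniformly in $x$); as written, the claimed uniform cancellation of $J_a(x)$ does not follow from the stated estimates.
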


        \begin{proof}
            Due to \eqref{eq:condition on the metric} \((E\times F, d)\) is a standard Borel space measured by \(\nu\).
            We derive the closed-form of Definition~\ref{def:general bayes rule} on \(A=\mathbb{R}^p\times\{\hat{y}\}\).
            We choose a fix \(R>0\) and consider the sequence of problems \eqref{prob: Bayes for Hausdorff space} with \(\sigma_n\to 0\). 
            By Theorem~\ref{theo:general bayes rule has a coherent solution}, the solution of \eqref{prob: Bayes for Hausdorff space} is of form

            \begin{equation}\label{eq: random variable observation pre posterior}
                \d\mu_a(x,y) = C_a^{-1} \expr{-\frac{a}{2}d_{F,R}(y,\hat{y})^2} d\nu(x,y)  \quad\text{with } a\in\mathbb{R} \,,
            \end{equation}
    
            and
    
           \begin{equation*}
                C_a = \int_{E\times F} \expr{-\frac{a}{2}d_{F,R}(y,\hat{y})^2} d\nu(x,y) \,,
            \end{equation*}
    
            We also know from Theorem~\ref{theo:general bayes rule has a coherent solution} that \(a\to\infty\) when \(\sigma\to 0\). 
            We consequently study the limit of \eqref{eq: random variable observation pre posterior} when \(a\to\infty\). 
            We first study the asymptotic behaviour of \(C_{a}\).

            \begin{equation}\label{eq: normalisation constant for lebesgue contininous case v2}
            \begin{split}
                C_{a} &=  \int_{E\times F} e^{-\frac{a}{2}d_{F,R}(y,\hat{y})^2} p(x,y) \d{}\mathcal{N}_E(x)\d{}\mathcal{N}_F(y)  \\ 
                &=  \int_{F} e^{-\frac{a}{2}d_{F,R}(y,\hat{y})^2} \left( \int_{E} p(x,y) \d{}\mathcal{N}_E(x) \right) \d{}\mathcal{N}_F(y)  \\
                &=  \int_{F} e^{-\frac{a}{2}d_{F,R}(y,\hat{y})^2} \widetilde{p}(y) \d{}\mathcal{N}_F(y) \,,\\
            \end{split}
            \end{equation}

            when writing 
        
            \begin{equation*}
                \widetilde{p} \colon y  \longmapsto \int_{E} p(x,y) \d{}\mathcal{N}_E(x) \,,
            \end{equation*}

            which is continuous due to the \(\sigma\)-compactness of \(E\) and the radon hypothesis on \(\mathcal{N}_E\).
    
            As $\widetilde{p}$ is integrable, we can directly apply Lemma~\ref{lem: exponentiated distance convergence to delta measure} to \((F,d_F,\mathcal{N}_F)\)  which implies

            \begin{equation}\label{eq:constant-limit-behave}
                \left(\int_F e^{-a d_F(y,\hat{y})} \d{\mathcal{N}_F(y)}\right)^{-1}C_{a} \xrightarrow[a \to \infty ]{} \widetilde{p}(\hat{y}) \,.
            \end{equation}
        
            For any bounded continuous function $f$, we can reproduce the arguments than the arguments which lead to \eqref{eq:constant-limit-behave} while substituting \(p\) by \(fp\). Indeed, the arguments are based on the fact that \(p\) is continuous and integrable which is also the case of \(fp\). So we can state the following:
        
            \begin{equation}\label{eq: numerator limit of Bayes rule for product space}
               \left(\int_F e^{-a d_R(y,\hat{y})} \d{\mathcal{N}_F(y)}\right)^{-1}  \int_{E\times F} f(x,y) e^{-a d_{F,R}(y,\hat{y})^2} p(x,y) \d{\mathcal{N}_E(x)} \d{\mathcal{N}_F(y)}   \xrightarrow[a \to \infty ]{} \int_{E} f(x,\hat{y}) p(x,\hat{y}) \d{\mathcal{N}_E(x)} \, .
            \end{equation}
        
            Combining \eqref{eq:constant-limit-behave} and \eqref{eq: numerator limit of Bayes rule for product space}, we can conclude that
        
                \begin{equation}\label{eq:bayes posterior on product space as a limit of MaxEnt}
                    \d{}\mu_{a}(x,y) \xrightarrow[a \to \infty ]{weak}  \frac{p(x,\hat{y})}{\int_{E}p(x,\hat{y})\d{\mathcal{N}_E(x)}} \d{\mathcal{N}_E}(x) \otimes \delta_{\hat{y}}(y)
                \end{equation}
            \end{proof}

            Numerous probabilistic models are formulated using Lebesgue-continuous measures. 
            In such settings, the canonical Bayesian update formula \eqref{eq: Baye's formula for lebesgue} is widely used in practice. 
            However, this formula is not derived from any underlying principle; it is introduced heuristically. 
            Corollary~\ref{theo: Bayes rule for Lesbegue continuous measures} demonstrates that, under Euclidean topology, our Definition~\ref{def:general bayes rule} yields a closed-form posterior that coincides with formula \eqref{eq: Baye's formula for lebesgue}. 
            By doing so, we provide a principled justification for its use and establish a mathematically coherent connection between the classical conditional probability \eqref{def: conditional probability} and the canonical Bayesian update employed in continuous models.

    \begin{cor}[Bayes rule for Lebesgue continuous measures]\label{theo: Bayes rule for Lesbegue continuous measures}
        Consider \(\mathbb{R}^p\times\mathbb{R}^q\) equipped with the Euclidean metric \(d_{\text{euclid}}\). 
        Consider a couple of random variables \((X,Y)\) on \((\mathbb{R}^p\times\mathbb{R}^q,d_{\text{euclid}})\).
        Consider a probability measure \(\nu=p(x,y)\d{}\mathscr{L}^{p}(x) \d{}\mathscr{L}^{q}(y)\) where \(\mathscr{L}^{p}\), \(\mathscr{L}^{q}\) are respectively the Lebesgue measure on \(\mathbb{R}^p\) and \(\mathbb{R}^q\) and \(p\geq 0 \) a continuous function. 
        
        Suppose $\hat{y}\in Supp(\nu)$.
        
        Then MaxEnt Posterior \((\mathbb{R}^p\times\{\hat{y}\},\nu)\) and is the canonical Bayes posterior:

        \begin{equation}\label{eq: bayes posterior for lebesgue continuous random variables}
            \nu\left( x,y \mid \mathbb{R}^p\times\{\hat{y}\}\right) = \frac{p(x,\hat{y})}{\int_{\mathcal{X}}p(x,\hat{y}) \d{\mathscr{L}^{p}(x)}} \d{\mathscr{L}^{p}(x)} \otimes \delta_{\hat{y}}(y) \text{ .}
        \end{equation}
    \end{cor}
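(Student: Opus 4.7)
The plan is to recognize the corollary as a direct specialization of Theorem~\ref{theo: Bayes rule for product space} to the setting $E = \mathbb{R}^p$, $F = \mathbb{R}^q$ equipped with their Euclidean metrics, and with reference measures $\mathcal{N}_E = \mathscr{L}^p$, $\mathcal{N}_F = \mathscr{L}^q$. The work therefore reduces to checking that every hypothesis of the theorem holds in this concrete Lebesgue--Euclidean setup, and then quoting the theorem's conclusion.

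First, I would observe that $\mathbb{R}^p$ and $\mathbb{R}^q$ with their Euclidean metrics are Polish and are exhausted by the closed balls $\overline{B}(0,n)$, so they are $\sigma$-compact standard Borel spaces. The Lebesgue measures $\mathscr{L}^p$ and $\mathscr{L}^q$ are Radon, being finite on compact sets and both outer and inner regular. Second, I would verify the metric-compatibility identities \eqref{eq:condition on the metric}: for any $x_1,x_2\in\mathbb{R}^p$ and $y\in\mathbb{R}^q$ one has
\begin{equation*}
d_{\text{euclid}}((x_1,y),(x_2,y)) = \sqrt{\lVert x_1-x_2\rVert^2 + 0} = \lVert x_1-x_2\rVert_{\mathbb{R}^p},
\end{equation*}
and symmetrically for fixed $x$, so the Euclidean metric on $\mathbb{R}^{p+q}$ does agree along coordinate fibers with the factor Euclidean metrics. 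Third, the joint density $p$ is continuous and nonnegative by hypothesis, and $\nu = p\,\d\mathscr{L}^p\otimes\d\mathscr{L}^q$ is precisely of the product form required by the theorem.

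With all hypotheses in place (taking the condition $\hat{y}\in\text{int}(\text{Supp}(\nu))$ as required by the theorem), I would apply Theorem~\ref{theo: Bayes rule for product space} with $A = \mathbb{R}^p\times\{\hat{y}\}$. The conclusion \eqref{eq: bayes posterior for product space} then reads verbatim as \eqref{eq: bayes posterior for lebesgue continuous random variables}, closing the proof.

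The only mildly delicate step is the metric-compatibility verification: Theorem~\ref{theo: Bayes rule for product space} does not demand that $d$ be a product metric in any global sense, only that the two coordinate-fiber identities \eqref{eq:condition on the metric} hold, and this is what makes the Euclidean choice admissible. Beyond this observation, the corollary is essentially a substitution exercise; no new analytic estimates are required because the limit behavior as $a\to\infty$ has already been handled in full generality by Theorem~\ref{theo: Bayes rule for product space}.
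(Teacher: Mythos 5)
Your proposal is correct and takes essentially the same route as the paper, whose proof is a one-line invocation of Theorem~\ref{theo: Bayes rule for product space} with \(E=\mathbb{R}^p\), \(F=\mathbb{R}^q\), \(\mathcal{N}_E=\mathscr{L}^p\), \(\mathcal{N}_F=\mathscr{L}^q\) and the Euclidean metrics; you merely make explicit the hypothesis checks (\(\sigma\)-compactness, Radon property, the fiber identities \eqref{eq:condition on the metric}) that the paper leaves implicit. Your parenthetical about the theorem requiring \(\hat{y}\in\text{int}(\text{Supp}(\nu))\) while the corollary only assumes \(\hat{y}\in\text{Supp}(\nu)\) flags the same gap the paper addresses only in the remark following the corollary.
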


    \begin{proof}
        A direct application of Theorem~\ref{theo: Bayes rule for product space} where \(E=\mathbb{R}^p\), \(F=\mathbb{R}^q\), \(\mathcal{N}_E=\mathscr{L}^p\), \(\mathcal{N}_F=\mathscr{L}^q\), and \(d,d_E,d_F\) the Euclidean metrics on the corresponding Euclidean space leads to the MaxEnt Posterior \eqref{eq: bayes posterior for lebesgue continuous random variables}.
    \end{proof}
       
  \begin{rem}
            In Corollary~\ref{theo: Bayes rule for Lesbegue continuous measures}, due to the Lebesgue continuity of \(\nu\), the hypothesis \(\hat{y}\in \text{int}(Supp(\mu))\) is very weak. Indeed \(\nu(Supp(\nu)/\text{int}(Supp(\nu)))=0\). 
        \end{rem}

\medskip

    In summary, Section~\ref{section: Bayes rule for null measure event} establishes that Definition~\ref{def:general bayes rule} yields a well-defined Bayesian update rule that satisfies the structural properties expected of a posterior measure. 
    Moreover, we demonstrate that classical Bayesian formulas, the conditional probability \eqref{def: conditional probability} and the canonical update for Lebesgue-continuous models \eqref{eq: Baye's formula for lebesgue}, arise as special cases of our MaxEnt framework. 
    These results confirm that Bayesian inference is fully encompassed by the MaxEnt framework.

\section{Resolution of the Borel-Kolmogorov Paradox}\label{sec:paradox}

The Borel–Kolmogorov paradox reveals a fundamental inconsistency in the definition of conditional probability for events of null measure. 
In various probabilistic models, multiple posterior distributions have been proposed for the same pair consisting of a prior and a conditionalization set. 
The resulting ambiguity in selecting the correct posterior formulation has led to what is commonly referred to as the Borel-Kolmogorov paradox. 
In this section, we examine the origin of this non-uniqueness and propose a resolution grounded in geometric considerations.

\subsection{A Problem of Unicity}

In his foundational work, \citet[Section 39.]{borel_ements_1909} highlighted the difficulty of defining probability in geometric contexts. 
Specifically, he noted that the probability of a subset of lower dimension than the ambient space is not directly implied by the axioms of probability theory as formalized by \cite{kolmogorov_foundations_2018}[Section 1.1]. 
For example, in Euclidean spaces equipped with the Lebesgue measure, lower-dimensional subsets typically have measure zero. 
Nevertheless, certain problems require computing the probability of a point in a three-dimensional volume, conditioned on its belonging to a two-dimensional surface — or analogously, a point on a surface conditioned on its presence in a one-dimensional curve.

    In such cases, the intuitive principle of proportionality, which underlies the classical definition of conditional probability \eqref{def: conditional probability}, fails to apply. 
    \citet[Section 43.]{borel_ements_1909} illustrated this issue through the example of conditionalization on a meridian of a uniformly measured sphere. 
    He demonstrated that a uniform measure on the sphere does not necessarily imply a uniform measure on a conditional great circle. 
    While Borel’s reasoning is coherent and insightful, it remains case-specific and lacks a general mathematical formalization.

    \citet[Section 5.2]{kolmogorov_foundations_2018} subsequently concluded that conditional probabilities for null events cannot be defined within the standard axiomatic framework. 
    He proposed that to define a posterior distribution on a meridian, the meridian should be treated as an atomic element of a \(\sigma\)-algebra that partitions the sphere. 
    This idea was later generalized through the formalism of conditional expectation, as developed and clarified by \citet{gyenis_conditioning_2017}.

    Although conditional expectation provides a rigorous method for defining conditional probabilities on null sets, it does not resolve the paradox.
    As shown in Section~\ref{subsec: probability theory do not lead to unicity of Bayes Rule}, this approach permits an infinite number of valid posterior distributions for the same conditioning set.

    Alternatively, \citet{jaynes_probability_2003}[15.7] proposed defining conditional probabilities for null events as limits of conditional probabilities for non-null events (see Section~\ref{sec:limit-process-proba-condi}). 
    However, the outcome of such limit procedures depends on how the approximating non-null events are chosen, and neither work offers a principled method for selecting these approximations.

    \citet[Definition 1]{bungert_lion_2022} advocate for the use of a canonical formula. This formula can, in certain cases, be approximated as a limit involving conditional probabilities over enlarged sets. 
    In \citep[Section 3.3]{bungert_lion_2022}, the authors provide conditions under which their canonical formula is equivalent to the limit of such conditional probability formulations. However, by promoting a unique canonical representation, their framework recovers formula \eqref{eq: uniform posterior on meridian} but excludes  \eqref{eq: posterior on meridian}. 
    Consequently, the problem of identifying each posterior measure based on rigorous mathematical criteria remains unresolved, see Appendix~\ref{sec:limit-process-proba-condi}.
    
    The key to understanding the Borel–Kolmogorov paradox lies in noting that, within measure theory, sets of null measure are indistinguishable. 
    However, any extension of the classical conditional probability formula \eqref{def: conditional probability} to null events is expected to differentiate between such sets. 
    Consequently, a definition of conditional probability based solely on measure-theoretic principles cannot satisfy this expectation.

    To address this limitation, an additional mathematical structure must be introduced. 
    In this work, we incorporate topology to distinguish between null sets. 
    Topology provides an intuitive framework for differentiating between non-homeomorphic sets, thereby enabling a more refined treatment of conditional probabilities. 
    As we will demonstrate, topological considerations offer a coherent resolution to the intuitive challenges posed by the Borel–Kolmogorov paradox.

\subsection{Geometrical Resolution}\label{subsec:geometrical-resolution}

In the following, we present a topological resolution of the paradox. 
We show that on the sphere, once a prior measure and a distance function are fixed, Definition~\ref{def:general bayes rule} yields a unique MaxEnt posterior. 
However, if the prior and conditionalization set are held constant while the distance function is varied, the resulting posterior measures differ. 
This observation supports our claim that Definition~\ref{def:general bayes rule} resolves the Borel–Kolmogorov paradox by attributing the multiplicity of posterior distributions to the choice of non-isometric metrics in the initial model.

\subsubsection{Setup}\label{subsec:set-up}

    Consider the Riemannian manifold \((\mathbb{S}^2,g)\) equipped with the Riemannian metric \(g\) inherited from the ambient Euclidean space of \((\mathbb{R}^3,d_{\text{euclid}})\).
    We denote \(d_g\) the geodesic distance inherited from \(g\).
    We endow \((\mathbb{S}^2,d_g)\) with the Borel \(\sigma\)-algebra and define the uniform prior probability measure \(\nu\) on the sphere by:
    
    \begin{align}\label{eq:sphere-uniform-measure}
        \nu(B) = \frac{\text{Area}(B)}{\text{Area}(\mathbb{S}^2)}  \quad \forall B \in \mathcal{B}\left(\mathbb{S}^2\right)\,,
    \end{align}

   where the area is computed using the classical notion for embedded submanifolds \cite{lee_introduction_2012}[Chapter 16]:
   
  \begin{align}\label{eq:area-formula}
        \text{Area}(B) = \int_B \sqrt{Det(g(a))} \d a \,,
   \end{align}

   where \(\d{a}\) is the unique  volume form (\(n\)-form) associated with the Riemannian metric \(g\). 
   The area measure is invariant under diffeomorphisms (see \citep[Appendix B: Densities]{lee_introduction_2018}),  ensuring that the computed area does not depend on the choice of chart.

   To facilitate computation, we adopt the chart:

   \begin{align}\label{eq:chart on sphere}
        \mathbb{S}^2 \ni (x,y,z) \mapsto (\varphi,\theta) = \left(\arcsin\left(z\right), \atantwo\left(y,x\right)\right) \in ]-\frac{\pi}{2}, \frac{\pi}{2}[ \times ]-\pi, \pi ]\,,
   \end{align}
   
   with \(\atantwo\left(y,x\right) = 2 \arctan \frac{y}{\sqrt{x^2+ y^2}+x}\), as illustrated in Figure~\ref{fig:sphere parametrisation}.
    In this coordinate system, the measure \(\nu\) is given by the Lebesgue continuous measure:

    \begin{equation}\label{eq: uniform measure on sphere}
        \nu(\varphi,\theta) = \cos{\varphi} \frac{\mathrm{d}\varphi \mathrm{d}\theta}{4\pi} \,,
    \end{equation}

    where \(\d \varphi\) and \(\d \theta\) denote the Lebesgue measures on their respective intervals.
    In the remainder of this section, we analyze the manifold \(\mathbb{S}^2\) using the coordinate chart defined in equation~\eqref{eq:chart on sphere}.

\subsubsection{MaxEnt posterior on the Sphere}

\begin{figure}[!htbp]
    \centering
    \begin{subfigure}{0.5\textwidth}
        \raggedright
        
       \begin{minipage}[t]{\linewidth}
        \begin{tikzpicture}[tdplot_main_coords,line cap=round,line join=round,  draw=red]
    \pgfmathsetmacro{\RadiusSphere}{3}
    \tdplotsetmaincoords{60}{131}
  % parameters: 5 latitude levels (incl. poles), 9 longitudes => 29 points
  \pgfmathtruncatemacro{\Levels}{5}
  \pgfmathtruncatemacro{\NLong}{9}
  \pgfmathsetmacro{\dTheta}{180/(\Levels-1)}  % 180/4 = 45°
  \pgfmathsetmacro{\dPhi}{360/(\NLong)}       % 360/9 = 40°

  % sphere body
  \draw[tdplot_screen_coords,ball color=gray,fill opacity=0.45] (0,0,0) circle (\RadiusSphere);

  % north & south pole points
  \path (z spherical cs:radius=\RadiusSphere,theta=0,phi=0)
    node[circle,fill=black,inner sep=1.0pt,opacity=\MCheatOpa] (P-N) {};
  \path (z spherical cs:radius=\RadiusSphere,theta=180,phi=0)
    node[circle,fill=black,inner sep=1.0pt,opacity=\MCheatOpa] (P-S) {};

  % rings of points (3 rings between the poles)
  \foreach \r in {1,...,\numexpr\Levels-2\relax}{
    \pgfmathsetmacro{\Theta}{\r*\dTheta} % 45, 90, 135
    \foreach \l in {0,...,\numexpr\NLong-1\relax}{
      \pgfmathsetmacro{\Phi}{\l*\dPhi}
      \path (z spherical cs:radius=\RadiusSphere,theta=\Theta,phi=\Phi)
        node[circle,fill=black,inner sep=1.0pt,opacity=\MCheatOpa] (P-\r-\l) {};
    }
  }

  % longitudes (from north to south) – use plots so hidden segments fade
  \foreach \l in {0,...,\numexpr\NLong-1\relax}{
    \pgfmathsetmacro{\Phi}{\l*\dPhi}
    \draw[very thin]
      plot[variable=\t,domain=0:180,samples=90,smooth]
      (z spherical cs:radius=\RadiusSphere,theta=\t,phi=\Phi);
  }

  % latitude rings (3 interior parallels)
  \foreach \r in {1,...,\numexpr\Levels-2\relax}{
    \pgfmathsetmacro{\Theta}{\r*\dTheta}
    \draw[very thin]
      plot[variable=\t,domain=0:360,samples=181,smooth]
      (z spherical cs:radius=\RadiusSphere,theta=\Theta,phi=\t);
  }

  % diagonals to triangulate the two belts (ring1–ring2 and ring2–ring3)
  \foreach \r in {1,2}{%
    \pgfmathsetmacro{\ThA}{\r*\dTheta}
    \pgfmathsetmacro{\ThB}{(\r+1)*\dTheta}
    \foreach \l in {0,...,\numexpr\NLong-1\relax}{%
      % diagonal from (r, l+1) down to (r+1, l)
      \draw[very thin]
        plot[variable=\t,domain=0:1,samples=30,smooth]
        (z spherical cs:
          radius=\RadiusSphere,
          theta={(1-\t)*\ThA + \t*\ThB},
          phi={(1-\t)*((\l+1)*\dPhi) + \t*(\l*\dPhi)});
    }%
  }

\end{tikzpicture}
        % \vspace{0.5em}
        \caption{Uniform measure \eqref{eq:sphere-uniform-measure} on the Sphere \(\mathbb{S}^2\).}
        \label{fig:sphere parametrisation}
        \end{minipage}
    \end{subfigure}
    \hfill
    \begin{subfigure}{0.45\textwidth}
        \raggedleft
        \begin{minipage}[t]{\linewidth}
        \includegraphics[scale=0.4, height=6cm]{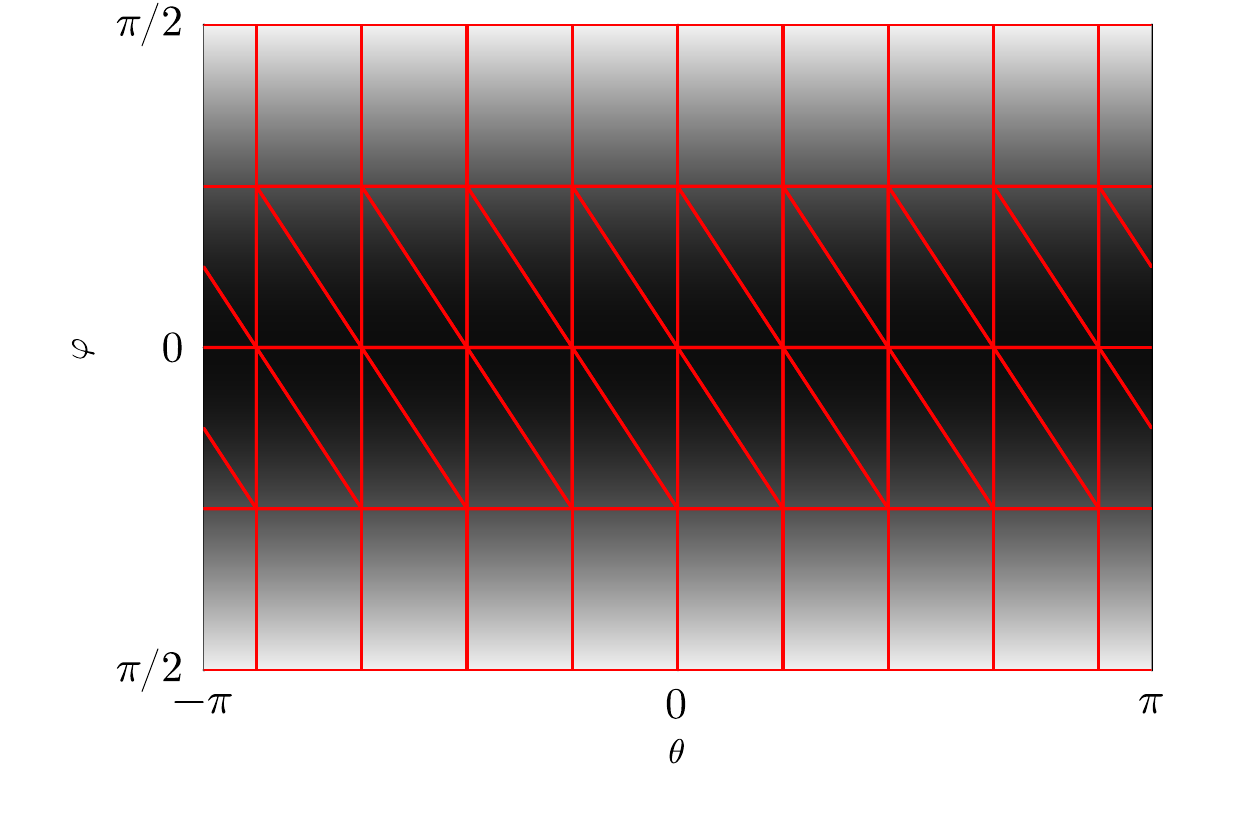}
        \caption{Representation of \((\mathbb{S}^2,\widetilde{d})\), where \(\widetilde{d}\) is defined in equation \eqref{eq: non rotation invariant distance}, via an isometry onto the Euclidean space \(]-\frac{\pi}{2}, \frac{\pi}{2}[ \times ]-\pi, \pi ]\). The push-forward of the uniform measure \eqref{eq:sphere-uniform-measure} under this isometry is illustrated by the grey shading and corresponds to the measure defined in equation \eqref{eq: uniform measure on sphere}. The density increases with shading intensity.}
        \label{fig:Projection}
        \end{minipage}
    \end{subfigure}
    \caption{\textbf{Measure-preserving Mercator projection.} \\ 
    We map the sphere \(\mathbb{S}^2\) (see Section~\ref{subsec:set-up}) onto the domain \([-\frac{\pi}{2}, \frac{\pi}{2}] \times ]-\pi, \pi ]\) via the Mercator map defined in equation~\eqref{eq:chart on sphere}. 
    When \(\mathbb{S}^2\) is equipped with the metric \(\widetilde{d}\), defined in equation~\eqref{eq: non rotation invariant distance}, the map \eqref{eq:chart on sphere} onto the Euclidean metric space \([-\frac{\pi}{2}, \frac{\pi}{2}] \times ]-\pi, \pi ]\) is isometric. 
    The distortion of both distance and area introduced by the map is illustrated by the transformation of the triangle (in red) in Figure~\ref{fig:sphere parametrisation} into the triangles and rectangle (in red) shown in Figure~\ref{fig:Projection}. 
    The push-forward of the uniform measure \(\nu\) (see equation~\eqref{eq:area-formula}) under the map is given by equation~\eqref{eq: uniform measure on sphere}.
    Although the original measure on \(\mathbb{S}^2\) is uniform  (Figure~\ref{fig:sphere parametrisation}), the push-forward measure on the Euclidean space (Figure~\ref{fig:Projection}), which preserves the geometric structure, is not uniform.
    These observations lead to two key conclusions. 
    First, Geometric considerations are essential for a coherent definition of Bayesian inference. 
    Second, the arguments used to define the Bayes posterior on every great circles of \((\mathbb{S}^2, \nu)\) as uniform are inherently tied to the geodesic distance \(d_g\) (defined in Section~\ref{subsec:set-up})  and cannot be derived solely from measure-theoretic principles (see Section~\ref{subsec:BK-Paradox-interpretation}).}
    \label{fig:mercator-projection}
\end{figure}

    We start by calculating the MaxEnt posterior on two great circles,\(\{\varphi=0\}\) and \(\{\theta = 0\}\cup \{\theta =  \pi\}\) with the geodesic distance \(d_g\) derived from the Riemannian metric \(g\)  
    As in \((\mathbb{S}^2,d_g)\) any great circle is topologically equivalent; we obtain the same result twice: a uniform distribution on the great circle. 
    
    \begin{prop}[Uniform Sphere equipped with geodesic distance]\label{theo: Bayesian posterior on the great circle with geodesic distance}
        Consider the sphere \(\mathbb{S}^2\) equipped with the geodesic distance \(d_g\), derived from the Riemannian metric \(g\). 
        Measure \((\mathbb{S}^2,d_g)\) with the uniform probability measure. 
        Then the MaxEnt posterior of any great circle is a uniform probability measure.
        For instance, if we consider the chart of Figure~\ref{fig:sphere parametrisation}, the Bayes posterior of Definition~\ref{def:general bayes rule} on \(\{\varphi=0\}\) and \(\{\theta = 0\}\cup \{\theta =  \pi\}\) is written as a mixture of point-mass and Lebesgue continuous measure as follows:

        \begin{align}\label{eq: uniform posterior on meridian}
            \frac{1}{2\pi} \d{\varphi} \times  (\delta_{0}(\theta)  + \delta_{\pi}(\theta))\,,
        \end{align}
        \begin{align}\label{eq: uniform posterior on equator}
            \frac{1}{2\pi} \d{\theta} \times  \delta_{0}(\varphi) \,.
        \end{align}
    \end{prop}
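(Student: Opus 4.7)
The plan is to combine existence of the MaxEnt posterior with the invariance property established in Proposition~\ref{prop:invariance-by-homeomorphism}. Every rotation of $\mathbb{R}^3$ restricts to an isometry of $(\mathbb{S}^2, d_g)$ that preserves the uniform measure $\nu$; for any great circle $A$, the subgroup of $SO(3)$ stabilizing the plane of $A$ therefore satisfies the hypotheses of Proposition~\ref{prop:invariance-by-homeomorphism} and acts transitively on $A \cong S^1$. Since the uniform probability measure is the unique rotation-invariant measure on $S^1$, and since the MaxEnt posterior is supported on $A$ by Proposition~\ref{prop:general bayes rule is supported on A}, the result will follow as soon as existence of the MaxEnt posterior is established on a single representative great circle.

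To establish existence, I would work on the equator $A = \{\varphi = 0\}$, where in the chart \eqref{eq:chart on sphere} the geodesic distance simplifies to $d_g((\varphi, \theta), A) = |\varphi|$. The pre-MaxEnt posterior of Theorem~\ref{theo:general bayes rule has a coherent solution} then factorizes as $\d{\mu_a}(\varphi, \theta) \propto e^{-a \min(|\varphi|, R)^2}\, \cos\varphi \, \d\varphi\, \d\theta$. The $\theta$-marginal is uniform on $(-\pi, \pi]$, and a Laplace-type argument (change of variable $u = \sqrt{a}\,\varphi$, using that $\cos\varphi \to 1$ as $\varphi \to 0$) shows that the $\varphi$-marginal concentrates at $0$. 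This yields the weak limit $\frac{1}{2\pi} \d\theta \otimes \delta_0(\varphi)$, which is precisely \eqref{eq: uniform posterior on equator}. Combined with Proposition~\ref{prop:invariance-by-homeomorphism} applied to rotations around the $z$-axis, this identifies the limit as the uniform measure on the equator without any ambiguity.

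For the meridian pair $A' = \{\theta = 0\} \cup \{\theta = \pi\}$, I would invoke Proposition~\ref{prop:invariance-by-homeomorphism} applied to the rotation by $\pi/2$ about the $x$-axis, which is a $\nu$-preserving isometry of $(\mathbb{S}^2, d_g)$ mapping the equator onto $A'$. Pushing \eqref{eq: uniform posterior on equator} forward through this rotation directly yields \eqref{eq: uniform posterior on meridian}. The main obstacle is the Laplace-type asymptotic argument needed to confirm existence and compute the limit on the equator: the Riemannian factor $\cos\varphi$ and the truncation at $R$ must be handled carefully, particularly to ensure that the contribution from $|\varphi| \geq R$ is indeed exponentially negligible so that the truncation does not alter the limit. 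Since $\cos\varphi$ is smooth and equals $1$ at the equator, this is a standard computation; alternatively it can be bypassed by verifying the continuous-differentiability hypothesis of Theorem~\ref{theo: criteria of existence} through a tubular-neighbourhood computation around $A$.
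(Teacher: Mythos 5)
Your proposal is correct, and for half of the work it takes a genuinely different route from the paper's. The paper proves the proposition by two separate direct computations: it writes down the geodesic distance to each great circle ($|\varphi|$ for the equator, $|\theta\cos\varphi|$ for the meridian pair), forms the pre-MaxEnt posterior \eqref{def: pre Bayes posterior} in the chart \eqref{eq:chart on sphere}, and carries out a Laplace-type change of variables ($\psi=\sqrt{a}\,\varphi$, resp.\ $\psi=\sqrt{a}\,\theta\cos\varphi$) to identify both weak limits. Your treatment of the equator is essentially the paper's first computation, including the handling of the truncation (take $R>\pi/2$ so that $d_R=d$ on the chart) and of the factor $\cos\varphi\to 1$. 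Where you diverge is the meridian pair: instead of redoing the asymptotics with the more awkward distance function, you transport the equator result through the rotation by $\pi/2$ about the $x$-axis via Proposition~\ref{prop:invariance-by-homeomorphism}, which is legitimate since that rotation is a $\nu$-preserving isometry of $(\mathbb{S}^2,d_g)$ carrying $\{\varphi=0\}$ onto $\{\theta=0\}\cup\{\theta=\pi\}$, and the proposition explicitly transfers both existence and the value of the posterior. This buys two things the paper's argument does not: it avoids the second Laplace computation entirely (and incidentally sidesteps the paper's slightly loose identification of the meridian distance as $|\theta\cos\varphi|$ rather than $\arcsin|\sin\theta\cos\varphi|$, which agree only to leading order near the circle), and, by combining the transitive action of the stabilizer of a great circle with the uniqueness of the rotation-invariant probability measure on $S^1$ and with Proposition~\ref{prop:general bayes rule is supported on A}, it turns the paper's ``any great circle'' claim --- which the paper asserts by symmetry but only verifies on two representatives --- into an actual proof. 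I see no gap; the only point requiring care is the one you already flag, namely the Laplace asymptotics on the equator, and that step is carried out in the paper exactly as you describe.
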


    \begin{proof}
    The proof is similar to the proof of Corollary~\ref{theo: Bayes rule for Lesbegue continuous measures} and contains no new ideas.
    A detailed proof is provided in Appendix~\ref{subsec: proof of proposition bayesian posterior on the great circle with geodesic distance}.
    \end{proof}

    We repeat the same experience, but this time with a distance non-invariant by rotation:

    \begin{equation}\label{eq: non rotation invariant distance}
    \widetilde{d}((\theta_1,\varphi_1), (\theta_2,\varphi_2)) = 
    \sqrt{\left((\varphi_1 - \varphi_2) \bmod \pi\right)^2 + \left((\theta_1 - \theta_2) \bmod 2\pi\right)^2} \,,
\end{equation}

    The distance \(\widetilde{d}\) \eqref{eq: non rotation invariant distance} is typically a topology of a projection of the Earth on a flat map. 
    This topology is not invariant under rotation. 
    The meridians are not isomorphic to the equator, the two great circles \(\{\varphi=0\}\) and \(\{\theta = 0\}\cup \{\theta =  \pi\}\) are not topologically equivalent. 
    Indeed at the two poles \((0,\frac{\pi}{2})\) (North) and \((0,-\frac{\pi}{2})\) (South), \(\widetilde{d}\) is not equivalent to the geodesic distance \(d_g\). 
    From the same uniform prior \eqref{eq: uniform measure on sphere} on the chart, we obtain two different MaxEnt posteriors on the great circles \(\{\varphi=0\}\) and \(\{\theta = 0\}\cup \{\theta =  \pi\}\).
    We obtain a uniform Bayes posterior on one circle and a non-uniform Bayes posterior with a cosine as distribution, the posterior historically derived by \citep{kolmogorov_foundations_2018}[Section 5.2]. 

    \begin{prop}[Uniform Sphere equipped with Map-projection distance]\label{theo: Bayesian posterior on the great circle with map-projection distance}
        Consider the sphere \(\mathbb{S}^2\) equipped with the map projection distance \(\widetilde{d}\) \eqref{eq: non rotation invariant distance}. 
        Measure \((\mathbb{S}^2,\widetilde{d})\) with the uniform probability measure \eqref{eq: uniform measure on sphere}. 
        Then the Bayesian posterior on the great circles   \(\{\varphi=0\}\) is 

        \begin{align}\label{eq: posterior on equator}
            \frac{1}{2\pi} \d{\theta} \times  \delta_{0}(\varphi) \,,
        \end{align}

        while the MaxEnt posterior of on the great circles \(\{\theta = 0\}\cup \{\theta =  \pi\}\) differs and is

        \begin{align}\label{eq: posterior on meridian}
            \frac{\cos{\varphi}}{4} \d{\varphi} \times  (\delta_{0}(\theta)  + \delta_{\pi}(\theta))\,.
        \end{align}
    \end{prop}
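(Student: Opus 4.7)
The plan is to apply Theorem~\ref{theo: Bayes rule for product space} (for the equator) and to adapt the argument of its proof (for the meridian), after putting the chart into product form. With $E_\theta = \,]-\pi,\pi]$ and $E_\varphi = \,]-\pi/2,\pi/2[$ equipped respectively with the modular metrics $d_\theta$ and $d_\varphi$ built into \eqref{eq: non rotation invariant distance}, one has $\widetilde{d}^{\,2} = d_\varphi^{\,2} + d_\theta^{\,2}$, so the compatibility condition \eqref{eq:condition on the metric} holds. Both factors are $\sigma$-compact standard Borel spaces, and the prior \eqref{eq: uniform measure on sphere} has continuous density $p(\varphi,\theta) = \cos\varphi/(4\pi)$ with respect to the product Lebesgue measure. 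This places us in the setting of Theorem~\ref{theo: Bayes rule for product space}.

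For the equator $\{\varphi = 0\}$, Theorem~\ref{theo: Bayes rule for product space} applies directly with $(E,F) = (E_\theta, E_\varphi)$ and $\hat\varphi = 0$. Since the slice density $p(\varphi,0) = 1/(4\pi)$ is constant in $\theta$, the closed-form \eqref{eq: bayes posterior for product space} immediately gives $\frac{1}{2\pi}\,d\theta \otimes \delta_0(\varphi)$, exactly \eqref{eq: posterior on equator}.

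For the meridian $\{\theta = 0\} \cup \{\theta = \pi\}$ the conditioning set is the union of two disjoint fibers of the product structure, and Theorem~\ref{theo: Bayes rule for product space} cannot be invoked as stated. I would instead return to the pre-MaxEnt posterior \eqref{def: pre Bayes posterior} provided by Theorem~\ref{theo:general bayes rule has a coherent solution} and mimic the Laplace-style asymptotic argument used in the proof of Theorem~\ref{theo: Bayes rule for product space}. The key observation is that the squared distance depends on $\theta$ alone: $\widetilde{d}((\varphi,\theta), \{\theta = 0\}\cup\{\theta = \pi\})^2 = \min(|\theta|, \pi - |\theta|)^2$, which restores a product structure in the exponential weight $\exp(-a\widetilde{d}^{\,2}_R)\, p(\varphi,\theta) = g_a(\theta)\cos\varphi/(4\pi)$. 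The $\varphi$- and $\theta$-integrals therefore decouple. Applying Lemma~\ref{lem: exponentiated distance convergence to delta measure} to the $\theta$-integral on the circle $E_\theta$ shows that the $\theta$-marginal of $\mu_a$ concentrates symmetrically on the two zeros of $d_\theta(\cdot,\{0,\pi\})$, giving weight $1/2$ to each of $\delta_0$ and $\delta_\pi$; meanwhile the $\varphi$-marginal relaxes to $\cos\varphi\,d\varphi/2$ (normalization of the prior slice). Taking the product of these two limits yields \eqref{eq: posterior on meridian}.

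The main obstacle is the meridian case: the conditioning set consists of two disjoint fibers in the product chart, so Theorem~\ref{theo: Bayes rule for product space} does not apply verbatim, and one must recover the factor $1/2$ in front of each delta from a symmetric double-peak Laplace analysis rather than the single-peak analysis built into the theorem. The resolution hinges on the fact that $\widetilde{d}(\cdot,\{\theta = 0\}\cup\{\theta = \pi\})^2$ factorizes through $\theta$ alone, which is what keeps the asymptotics tractable.
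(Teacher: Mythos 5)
Your proposal is correct and follows essentially the same route as the paper: for the meridian, the paper likewise writes out the pre-MaxEnt posterior \eqref{def: pre Bayes posterior}, exploits the fact that \(\widetilde{d}(\cdot,\{\theta=0\}\cup\{\theta=\pi\})\) depends on \(\theta\) alone so that the \(\varphi\)- and \(\theta\)-integrals decouple, and runs a symmetric two-peak Laplace argument (splitting \(\theta\in[-\frac{\pi}{2},\frac{\pi}{2}]\) from its complement) to get weight \(\tfrac12\) on each of \(\delta_0\) and \(\delta_\pi\) against the normalized slice \(\tfrac{\cos\varphi}{2}\d\varphi\). The only cosmetic difference is the equator: the paper observes that \(\widetilde{d}(\cdot,\{\varphi=0\})=|\varphi|=d_g(\cdot,\{\varphi=0\})\) and re-uses Proposition~\ref{theo: Bayesian posterior on the great circle with geodesic distance}, whereas you invoke Theorem~\ref{theo: Bayes rule for product space}; both are valid one-line reductions.
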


    \begin{proof}
        The proof is available in Appendix~\ref{proof: bayesian posterior on the great circle with map-projection distance}.
    \end{proof}

 \section{Advantages of a MaxEnt approach to Bayes' rule}\label{sec: discussion}

   While several generalizations of Bayes’ rule rely exclusively on the Kolmogorov axioms (see \citep{klenke_conditional_2020,gyenis_conditioning_2017}, see Appendix~\ref{sec:other-generalization-of-bayes-rule}), our approach integrates topological and optimization-theoretic structures. 
This raises a natural question: why invoke multiple mathematical frameworks to address a problem that appears to be purely probabilistic?

In Section~\ref{subsec:BK-Paradox-interpretation}, we address intuitive concerns surrounding the Borel–Kolmogorov paradox, demonstrating that our resolution aligns with common expectations about conditional probability. 
In Section~\ref{subsec: probabilistic modelling perpective}, we argue that our formulation is better suited for rigorous and interpretable probabilistic modeling. 
Finally, in Section~\ref{subsec:axiom-consequences}, we discuss the axiomatic implications of our framework.

\subsection{Interpretation of the Borel Kolmogorov Paradox}\label{subsec:BK-Paradox-interpretation}

The MaxEnt posteriors derived in Propositions~\ref{theo: Bayesian posterior on the great circle with geodesic distance} and \ref{theo: Bayesian posterior on the great circle with map-projection distance} highlight the necessity of treating the Borel–Kolmogorov paradox as a geometric-probabilistic problem, rather than a purely probabilistic one, as originally suggested by \citet[Chapter 7]{borel_ements_1909}.

Under the geometric model \((\mathbb{S}^2,d_g,\nu)\) of the embedded Riemannian manifold sphere measured by the uniform measure defined in Section~\ref{subsec:set-up}, all great circles are topologically equivalent. 
Consequently, for any pair of great circles, there exists a measure-preserving isometry of \((\mathbb{S}^2,d_g)\) that maps one to the other. As shown in Proposition~\ref{prop:invariance-by-homeomorphism}, this symmetry implies that the conditional probabilities defined via Definition~\ref{def:general bayes rule} must be identical across all great circles.

In contrast, under the map-projection metric \(\widetilde{d}\) \eqref{eq: non rotation invariant distance}, the equator and meridians are not topologically equivalent. There exists no homeomorphism mapping one to the other. 
Thus, differing conditional probabilities for these sets are not only plausible but expected.

Some authors, such as \citet{bungert_lion_2022}, argue that a correct definition of conditional probability should yield a uniform posterior on any great circle of \(\mathbb{S}^2\) . 
This intuition, however, is not probabilistic but topological as argued by \citet[Section 6]{gyenis_conditioning_2017}. 
It originates from perceiving the sphere as an embedded submanifold of \(\mathbb{R}^3\) equipped with the Euclidean topology—i.e., as \((\mathbb{S}^2,d_g)\).
Proposition~\ref{theo: Bayesian posterior on the great circle with geodesic distance} confirms this intuition, while Proposition~\ref{theo: Bayesian posterior on the great circle with map-projection distance} refutes the claim that uniformity is universally correct.

    The non-uniform posterior derived in Proposition~\ref{theo: Bayesian posterior on the great circle with map-projection distance} becomes intuitive when  \((\mathbb{S}^2, \widetilde{d}, \nu)\) is projected onto a Euclidean space via a measure-preserving isometry. 
    As illustrated in Figure~\ref{fig:Projection}, this projection recovers the expected proportionality intuition, showing that the perceived paradox arises from the difficulty of forming intuition in non-Euclidean spaces.

    Therefore, the argument of \citet{bungert_lion_2022} for a unique posterior based solely on proportionality fails. 
    Both the uniform and non-uniform posteriors are valid, and their differences reflect distinct topological choices. 
    Unlike Definition~\ref{def: Gyenis def of Bayes posterior} of \citet{gyenis_conditioning_2017} that cannot discriminate between the multiple posteriors allowed by the probabilistic model \((\mathbb{S}^2, \nu)\) (see Appendix~\ref{subsec: probability theory do not lead to unicity of Bayes Rule}), our Definition~\ref{def:general bayes rule} yields a unique and principled posterior for each model \((\mathbb{S}^2, d, \nu)\).  
    Crucially, this resolution demonstrates that the Borel–Kolmogorov paradox dissolves when viewed through the lens of the triplet: prior measure, metric, and conditionalization set.

    \subsection{A Probabilistic Modeling perspective on Bayes' Rule}\label{subsec: probabilistic modelling perpective}

To appreciate the modeling advantages of our approach, one must view probability theory not merely as a mathematical discipline but as a tool for representing phenomena.

From a purely mathematical standpoint, the conditional expectation framework (see Appendix~\ref{subsec: probability theory do not lead to unicity of Bayes Rule}) provides a general and rigorous extension of conditional probability. 
In cases where \(\mathbb{P}(A)=0\), the existence of multiple valid conditional probabilities is not paradoxical—it is a direct consequence of the Kolmogorov axioms.

    However, in modeling contexts, this multiplicity becomes problematic. 
    Definition~\ref{def: Gyenis def of Bayes posterior} requires the selection of a sub-\(\sigma\)-algebra and a specific form for the conditional expectation. 
    These choices are arbitrary and lack interpretability from a modeling perspective. 
    When applying Bayes’ rule to different events within the same model, it is unclear how to ensure coherence among these choices.

    Our approach resolves this issue by replacing event-specific arbitrariness with a global metric structure. 
    Definition~\ref{def:general bayes rule} enforces topological coherence across all conditional probabilities derived from the model. 

    In contrast, \citet{bungert_lion_2022} addresses the ambiguity by prescribing a single posterior formula. 
    While this eliminates modeling uncertainty, it restricts expressivity and lacks justification for the chosen formula. 
    Their approach is less general than both Definition~\ref{def: Gyenis def of Bayes posterior} and our Definition~\ref{def:general bayes rule}.

In summary, our framework supports coherent and interpretable modeling while preserving flexibility. It allows the modeler to guide inference using topological intuition, rather than arbitrary choices.    

\subsection{Axiomatic implication}\label{subsec:axiom-consequences}

Our framework offers a new perspective on the axiomatic foundations of Bayes’ rule. 
The classical formula \eqref{def: conditional probability} satisfies Cox’s axioms (see \citep{cox_algebra_2001,knuth_inductive_2002}), which justify its use in probabilistic reasoning. 
However, the widely used formula for Lebesgue-continuous random variables \eqref{eq: Baye's formula for lebesgue} does not follow from Cox’s axioms, despite being applied in the same way.

By unifying both formulas under the MaxEnt principle, our work provides a principled justification for their common usage. 
In this light, the MaxEnt principle (see \citep{shore_axiomatic_1980}) can be viewed as a generalization of Bayesian inference.

\section*{Acknowledgments}
This publication is part of the project ``ROBUST: Trustworthy AI-based Systems for Sustainable Growth'' with project number KICH3.LTP.20.006, which is (partly) financed by the Dutch Research Council (NWO), GN Hearing, and the Dutch Ministry of Economic Affairs and Climate Policy (EZK) under the program LTP KIC 2020-2023.
\bibliography{ref}

@incollection{kechris_standard_1995,
	address = {New York, NY},
	title = {Standard {Borel} {Spaces}},
	isbn = {978-1-4612-4190-4},
	url = {https://doi.org/10.1007/978-1-4612-4190-4_12},
	abstract = {We characterized first the Borel spaces of seperable metrizable spaces.},
	language = {en},
	urldate = {2025-06-22},
	booktitle = {Classical {Descriptive} {Set} {Theory}},
	publisher = {Springer},
	author = {Kechris, Alexander S.},
	editor = {Kechris, Alexander S.},
	year = {1995},
	doi = {10.1007/978-1-4612-4190-4_12},
	note = {},
	keywords = {Borel Space, Closed Subspace, Polish Space, Separable Banach Space, Weak Topology},
	pages = {73--81},
}

@book{rudin_real_2013,
	address = {New York, NY},
	edition = {3. ed., internat. ed., [Nachdr.]},
	series = {{McGraw}-{Hill} international editions {Mathematics} series},
	title = {Real and complex analysis},
	isbn = {978-0-07-100276-9 978-0-07-054234-1},
	language = {eng},
	publisher = {McGraw-Hill},
	author = {Rudin, Walter},
	year = {2013},
	note = {OCLC: 957461070},
}

@book{federer_geometric_1996,
	address = {Berlin, Heidelberg},
	series = {Classics in {Mathematics}},
	title = {Geometric {Measure} {Theory}},
	copyright = {http://www.springer.com/tdm},
	isbn = {978-3-540-60656-7 978-3-642-62010-2},
	url = {http://link.springer.com/10.1007/978-3-642-62010-2},
	urldate = {2025-03-10},
	publisher = {Springer},
	author = {Federer, Herbert},
	year = {1996},
	doi = {10.1007/978-3-642-62010-2},
	keywords = {Geometric measure theory, Lebesgue integration, Multiplication, Tensor, calculus, calculus of variations, classical analysis, classical geometry, form, functional, homology, integration, integration theory, measure theory, sets},
}

@article{kallianpur_topology_1961,
	title = {The {Topology} of {Weak} {Convergence} of {Probability} {Measures}},
	volume = {10},
	issn = {0095-9057},
	url = {https://www.jstor.org/stable/24900698},
	number = {6},
	urldate = {2025-07-21},
	journal = {Journal of Mathematics and Mechanics},
	author = {Kallianpur, Gopinath},
	year = {1961},
	note = {Publisher: Indiana University Mathematics Department},
	pages = {947--969},
}

@article{williams_bayesian_1980,
	title = {Bayesian {Conditionalisation} and the {Principle} of {Minimum} {Information}},
	volume = {31},
	issn = {0007-0882},
	url = {https://www.journals.uchicago.edu/doi/abs/10.1093/bjps/31.2.131},
	doi = {10.1093/bjps/31.2.131},
	number = {2},
	urldate = {2025-06-07},
	journal = {The British Journal for the Philosophy of Science},
	author = {Williams, P. M.},
	month = jun,
	year = {1980},
	note = {Publisher: The University of Chicago Press},
	pages = {131--144},
}

@book{kolmogorov_foundations_2018,
	title = {Foundations of the Theory of Probability},
	isbn = {978-0-8218-2648-5},
	url = {https://books.google.nl/books?id=EmetAAAAMAAJ},
	series = {{AMS} Chelsea Publishing Series},
	publisher = {Chelsea Publishing Company},
	author = {Kolmogorov, A.N.},
	date = {1956},
	year = {1956},
	lccn = {lc56011512},
}

@book{lee_introduction_2012,
	address = {New York, NY},
	series = {Graduate {Texts} in {Mathematics}},
	title = {Introduction to {Smooth} {Manifolds}},
	volume = {218},
	copyright = {https://www.springernature.com/gp/researchers/text-and-data-mining},
	isbn = {978-1-4419-9981-8 978-1-4419-9982-5},
	url = {https://link.springer.com/10.1007/978-1-4419-9982-5},
	language = {en},
	urldate = {2025-05-01},
	publisher = {Springer New York},
	author = {Lee, John M.},
	year = {2012},
	doi = {10.1007/978-1-4419-9982-5},
}

@article{gyenis_conditioning_2017,
	title = {Conditioning using conditional expectations: the {Borel}–{Kolmogorov} {Paradox}},
	volume = {194},
	issn = {1573-0964},
	shorttitle = {Conditioning using conditional expectations},
	url = {https://doi.org/10.1007/s11229-016-1070-8},
	doi = {10.1007/s11229-016-1070-8},
	abstract = {The Borel–Kolmogorov Paradox is typically taken to highlight a tension between our intuition that certain conditional probabilities with respect to probability zero conditioning events are well defined and the mathematical definition of conditional probability by Bayes’ formula, which loses its meaning when the conditioning event has probability zero. We argue in this paper that the theory of conditional expectations is the proper mathematical device to conditionalize and that this theory allows conditionalization with respect to probability zero events. The conditional probabilities on probability zero events in the Borel–Kolmogorov Paradox also can be calculated using conditional expectations. The alleged clash arising from the fact that one obtains different values for the conditional probabilities on probability zero events depending on what conditional expectation one uses to calculate them is resolved by showing that the different conditional probabilities obtained using different conditional expectations cannot be interpreted as calculating in different parametrizations of the conditional probabilities of the same event with respect to the same conditioning conditions. We conclude that there is no clash between the correct intuition about what the conditional probabilities with respect to probability zero events are and the technically proper concept of conditionalization via conditional expectations—the Borel–Kolmogorov Paradox is just a pseudo-paradox.},
	language = {en},
	number = {7},
	urldate = {2025-04-11},
	journal = {Synthese},
	author = {Gyenis, Z. and Hofer-Szabó, G. and Rédei, M.},
	month = jul,
	year = {2017},
	keywords = {Borel–Kolmogorov Paradox, Conditionalization, Interpretation of probability},
	pages = {2595--2630},
}

@misc{bungert_lion_2022,
	title = {The lion in the attic -- {A} resolution of the {Borel}--{Kolmogorov} paradox},
	url = {http://arxiv.org/abs/2009.04778},
	doi = {10.48550/arXiv.2009.04778},
	abstract = {The Borel--Kolmogorov paradox of conditioning with respect to events of prior probability zero has fascinated students and researchers since its discovery more than 100 years ago. Classical conditioning is only valid with respect to events of positive probability. If we ignore this constraint and condition on such sets, for example events of type \${\textbackslash}\{Y=y{\textbackslash}\}\$ for a continuously distributed random variable \$Y\$, almost any probability measure can be chosen as the conditional measure on such sets. There have been numerous descriptions and explanations of the paradox' appearance in the setting of conditioning on a subset of probability zero. However, most treatments do not supply explicit instructions on how to avoid it. We propose to close this gap by defining a version of conditional measure which utilizes the Hausdorff measure. This makes the choice canonical in the sense that it only depends on the geometry of the space, thus removing any ambiguity. We describe the set of possible measures arising in the context of the Borel--Kolmogorov paradox and classify those coinciding with the canonical measure. The objective of this manuscript is to provide a manual for singular conditional probability: We give an explicit explanation in which settings ambiguity arises (and where not) and how to get rid of this ambiguity once and for all by a canonical choice.},
	urldate = {2025-04-06},
	publisher = {arXiv},
	author = {Bungert, Leon and Wacker, Philipp},
	month = apr,
	year = {2022},
	note = {arXiv:2009.04778 [math]},
	keywords = {Mathematics - Probability},
}

@book{cox_algebra_2001,
	title = {Algebra of {Probable} {Inference}},
	isbn = {978-0-8018-6982-2},
	url = {https://www.press.jhu.edu/books/title/1089/algebra-probable-inference},
	abstract = {In Algebra of Probable Inference, Richard T. Cox develops and demonstrates that probability theory is the only theory of inductive inference that abides by logical consistency. Cox does so through a functional derivation of probability theory as the unique extension of Boolean Algebra thereby establishing, for the first time, the legitimacy of probability theory as formalized by Laplace in the 18th century.Perhaps the most significant consequence of Cox's work is that probability represents a subjective degree of plausible belief relative to a particular system but is a theory that applies universally and objectively across any system making inferences based on an incomplete state of knowledge. Cox goes well beyond this amazing conceptual advancement, however, and begins to formulate a theory of logical questions through his consideration of systems of assertions—a theory that he more fully developed some years later. Although Cox's contributions to probability are acknowledged and have recently gained worldwide recognition, the significance of his work regarding logical questions is virtually unknown. The contributions of Richard Cox to logic and inductive reasoning may eventually be seen to be the most significant since Aristotle.},
	language = {en},
	urldate = {2025-06-17},
	publisher = {Johns Hopkins University Press},
	author = {Cox, Richard T.},
	year = {2001},
	doi = {10.56021/9780801869822},
}

@article{knuth_inductive_2002,
	title = {Inductive logic: from data analysis to experimental design},
	volume = {617},
	issn = {0094-243X},
	shorttitle = {Inductive logic},
	url = {https://doi.org/10.1063/1.1477061},
	doi = {10.1063/1.1477061},
	abstract = {In celebration of the work of Richard Threlkeld Cox, we explore inductive logic and its role in science touching on both experimental design and analysis of experimental results. In this exploration we demonstrate that the duality between the logic of assertions and the logic of questions has important consequences. We discuss the conjecture that the relevance or bearing, b, of a question on an issue can be expressed in terms of the probabilities, p, of the assertions that answer the question via the entropy. In its application to the scientific method, the logic of questions, inductive inquiry, can be applied to design an experiment that most effectively addresses a scientific issue. This is performed by maximizing the relevance of the experimental question to the scientific issue to be resolved. It is shown that these results are related to the mutual information between the experiment and the scientific issue, and that experimental design is akin to designing a communication channel that most efficiently communicates information relevant to the scientific issue to the experimenter. Application of the logic of assertions, inductive inference (Bayesian inference) completes the experimental process by allowing the researcher to make inferences based on the information obtained from the experiment.},
	number = {1},
	urldate = {2025-06-17},
	journal = {AIP Conference Proceedings},
	author = {Knuth, Kevin H.},
	month = may,
	year = {2002},
	pages = {392--404},
}

@article{shore_axiomatic_1980,
	title = {Axiomatic derivation of the principle of maximum entropy and the principle of minimum cross-entropy},
	volume = {26},
	issn = {1557-9654},
	doi = {10.1109/TIT.1980.1056144},
	abstract = {Jaynes's principle of maximum entropy and Kullbacks principle of minimum cross-entropy (minimum directed divergence) are shown to be uniquely correct methods for inductive inference when new information is given in the form of expected values. Previous justifications use intuitive arguments and rely on the properties of entropy and cross-entropy as information measures. The approach here assumes that reasonable methods of inductive inference should lead to consistent results when there are different ways of taking the same information into account (for example, in different coordinate system). This requirement is formalized as four consistency axioms. These are stated in terms of an abstract information operator and make no reference to information measures. It is proved that the principle of maximum entropy is correct in the following sense: maximizing any function but entropy will lead to inconsistency unless that function and entropy have identical maxima. In other words given information in the form of constraints on expected values, there is only one (distribution satisfying the constraints that can be chosen by a procedure that satisfies the consistency axioms; this unique distribution can be obtained by maximizing entropy. This result is established both directly and as a special case (uniform priors) of an analogous result for the principle of minimum cross-entropy. Results are obtained both for continuous probability densities and for discrete distributions.},
	number = {1},
	journal = {IEEE Transactions on Information Theory},
	author = {Shore, J. and Johnson, R.},
	month = jan,
	year = {1980},
	note = {Conference Name: IEEE Transactions on Information Theory},
	keywords = {Entropy functions},
	pages = {26--37},
}

@article{meehan_borel-kolmogorov_2021,
	title = {The {Borel}-{Kolmogorov} {Paradox} {Is} {Your} {Paradox} {Too}: {A} {Puzzle} for {Conditional} {Physical} {Probability}},
	volume = {88},
	issn = {0031-8248, 1539-767X},
	shorttitle = {The {Borel}-{Kolmogorov} {Paradox} {Is} {Your} {Paradox} {Too}},
	url = {https://www.cambridge.org/core/journals/philosophy-of-science/article/borelkolmogorov-paradox-is-your-paradox-too-a-puzzle-for-conditional-physical-probability/75A19949E1746DC32981D210B01F97B2},
	doi = {10.1086/714876},
	abstract = {The Borel-Kolmogorov paradox is often presented as an obscure problem that certain mathematical accounts of conditional probability must face. In this article, we point out that the paradox arises in the physical sciences, for physical probability or chance. By carefully formulating the paradox in this setting, we show that it is a puzzle for everyone, regardless of one’s preferred probability formalism. We propose a treatment that is inspired by the approach that scientists took when confronted with these cases.},
	language = {en},
	number = {5},
	urldate = {2025-06-10},
	journal = {Philosophy of Science},
	author = {Meehan, Alexander and Zhang, Snow},
	month = dec,
	year = {2021},
	pages = {971--984},
}

@article{rescorla_epistemological_2015,
	title = {Some epistemological ramifications of the {Borel}–{Kolmogorov} paradox},
	volume = {192},
	issn = {1573-0964},
	url = {https://doi.org/10.1007/s11229-014-0586-z},
	doi = {10.1007/s11229-014-0586-z},
	abstract = {This paper discusses conditional probability\$\$P(A\{{\textbackslash}vert \}B)\$\$, or the probability of A given B. When \$\$P(B){\textgreater}0\$\$, the ratio formula determines \$\$P(A \{{\textbackslash}vert \} B)\$\$. When \$\$P(B)=0\$\$, the ratio formula breaks down. The Borel–Kolmogorov paradox suggests that conditional probabilities in such cases are indeterminate or ill-posed. To analyze the paradox, I explore the relation between probability and intensionality. I argue that the paradox is a Frege case, similar to those that arise in many probabilistic and non-probabilistic contexts. The paradox vividly illustrates how an agent’s way of representing an entity can rationally influence her credal assignments. I deploy my analysis to defend Kolmogorov’s relativistic treatment of conditional probability.},
	language = {en},
	number = {3},
	urldate = {2025-06-10},
	journal = {Synthese},
	author = {Rescorla, Michael},
	month = mar,
	year = {2015},
	keywords = {Conditional Probability, Conditioning Event, Determinate Solution, Doxastic State, Epistemology, Logical Analysis, Mathematical Logic and Foundations, Null Event, Philosophical Logic, Philosophy of Probability, Probability Theory},
	pages = {735--767},
}

@incollection{arnold_conditional_2003,
	title = {The {Conditional} {Distribution} of {X} {Given} {X} = {Y} can be {Almost} {Anything}!},
	isbn = {978-0-203-49320-5},
	abstract = {Suppose that X and Y are independent absolutely continuous random variables. We consider the problem of deciding what is an appropriate choice for the conditional density of X given X = Y. It has been earlier noted [see, for example, Rao (1993)] that ambiguous responses to this question are possible. In the present note we argue that the situation is actually worse than this might suggest. In fact the conditional density of X given X = Y is essentially arbitrary. It can be whatever you wish!},
	booktitle = {Advances on {Theoretical} and {Methodological} {Aspects} of {Probability} and {Statistics}},
	publisher = {CRC Press},
	author = {Arnold, B. C. and Roberston, C. A.},
	year = {2003},
}

@article{bordley_avoiding_2015,
	title = {Avoiding both the {Garbage}-{In}/{Garbage}-{Out} and the {Borel} {Paradox} in updating probabilities given experimental information},
	volume = {79},
	issn = {1573-7187},
	url = {https://doi.org/10.1007/s11238-013-9369-0},
	doi = {10.1007/s11238-013-9369-0},
	abstract = {Bayes Rule specifies how probabilities over parameters should be updated given any kind of information. But in some cases, the kind of information provided by both simulation and physical experiments is information on how certain output parameters may change when other input parameters are changed. There are three different approaches to this problem, one of which leads to the Garbage-In/Garbage-Out Paradox, the second of which (Bayesian synthesis) violates the Borel Paradox, and the third of which (Bayesian melding) is a supra-Bayesian heuristic. This paper shows how to derive a fully Bayesian formula which avoids the Garbage-In/Garbage-Out and Borel Paradoxes. We also compare a Laplacian approximation of this formula with Bayesian synthesis and Bayesian melding and find that the Bayesian formula sometimes coincides with the Bayesian melding solution.},
	language = {en},
	number = {1},
	urldate = {2025-06-10},
	journal = {Theory and Decision},
	author = {Bordley, Robert F.},
	month = jul,
	year = {2015},
	note = {},
	keywords = {Bayesian Inference, Bayesian Network, Borel’s Paradox, Experiment, Formal Logic, Garbage-In/Garbage-Out, Logic, Markov Process, Probabilities, Probability Theory, Simulation},
	pages = {95--105},
}

@article{ben-naim_entropy_2019,
	title = {Entropy and {Information} {Theory}: {Uses} and {Misuses}},
	volume = {21},
	copyright = {http://creativecommons.org/licenses/by/3.0/},
	issn = {1099-4300},
	shorttitle = {Entropy and {Information} {Theory}},
	url = {https://www.mdpi.com/1099-4300/21/12/1170},
	doi = {10.3390/e21121170},
	abstract = {This article is about the profound misuses, misunderstanding, misinterpretations and misapplications of entropy, the Second Law of Thermodynamics and Information Theory. It is the story of the “Greatest Blunder Ever in the History of Science”. It is not about a single blunder admitted by a single person (e.g., Albert Einstein allegedly said in connection with the cosmological constant, that this was his greatest blunder), but rather a blunder of gargantuan proportions whose claws have permeated all branches of science; from thermodynamics, cosmology, biology, psychology, sociology and much more.},
	language = {en},
	number = {12},
	urldate = {2024-08-10},
	journal = {Entropy},
	author = {Ben-Naim, Arieh},
	month = dec,
	year = {2019},
	note = {Number: 12
Publisher: Multidisciplinary Digital Publishing Institute},
	keywords = {Shannon measure of information, entropy, information theory, second law, thermodynamics},
	pages = {1170},
}

@article{chandrasekaran_relative_nodate,
author = {Chandrasekaran, Venkat and Shah, Parikshit},
title = {Relative entropy optimization and its applications},
year = {2017},
issue_date = {Jan 2017},
publisher = {Springer-Verlag},
address = {Berlin, Heidelberg},
volume = {161},
number = {1–2},
issn = {0025-5610},
url = {https://doi.org/10.1007/s10107-016-0998-2},
doi = {10.1007/s10107-016-0998-2},
abstract = {In this expository article, we study optimization problems specified via linear and relative entropy inequalities. Such relative entropy programs (REPs) are convex optimization problems as the relative entropy function is jointly convex with respect to both its arguments. Prominent families of convex programs such as geometric programs (GPs), second-order cone programs, and entropy maximization problems are special cases of REPs, although REPs are more general than these classes of problems. We provide solutions based on REPs to a range of problems such as permanent maximization, robust optimization formulations of GPs, and hitting-time estimation in dynamical systems. We survey previous approaches to some of these problems and the limitations of those methods, and we highlight the more powerful generalizations afforded by REPs. We conclude with a discussion of quantum analogs of the relative entropy function, including a review of the similarities and distinctions with respect to the classical case. We also describe a stylized application of quantum relative entropy optimization that exploits the joint convexity of the quantum relative entropy function.},
journal = {Math. Program.},
month = jan,
pages = {1–32},
numpages = {32},
keywords = {81P45, 90C25, 94A15, 94A17, Araki---Umegaki relative entropy, Dynamical systems, Golden---Thompson inequality, Matrix permanent, Optimization over non-commuting variables, Quantum channel capacity, Quantum information, Robust optimization, Shannon entropy, Von-Neumann entropy},
}

@book{wu_maximum_1997,
	address = {Berlin, Heidelberg},
	series = {Springer {Series} in {Information} {Sciences}},
	title = {The {Maximum} {Entropy} {Method}},
	volume = {32},
	copyright = {https://www.springernature.com/gp/researchers/text-and-data-mining},
	isbn = {978-3-642-64484-9 978-3-642-60629-8},
	url = {https://link.springer.com/10.1007/978-3-642-60629-8},
	language = {en},
	urldate = {2024-07-28},
	publisher = {Springer},
	author = {Wu, Nailong},
	year = {1997},
	doi = {10.1007/978-3-642-60629-8},
	keywords = {ACF extension, AR process, Burg algorithm, Maximum entropy method, Wiener Filter, differential equation, energy, entropy, experiment, mechanics, other fast algristhms, phase, roots, spectra, system},
}

@article{gull_maximum_1984,
	title = {Maximum entropy method in image processing},
	volume = {131},
	issn = {2053-7956},
	url = {https://digital-library.theiet.org/content/journals/10.1049/ip-f-1.1984.0099},
	doi = {10.1049/ip-f-1.1984.0099},
	abstract = {Maximum entropy has proved to be an enormously powerful tool for reconstructing images from many types of data. It has a privileged position as the only consistent method for combining different data into a single image. It has been used most spectacularly in radio astronomical interferometry, where it deals routinely with images of up to a million pixels, and high dynamic range. We also give examples of optical deconvolutions and tomographic reconstructions to illustrate the generality of application and the quality of maximum entropy images. Some types of data, a such as Fourier intensitites, are inadepquate in themselves to produce a good image. The maximum entropy method allows us to incorporate extra, Prior knowledge about the object being imaged, and we give examples of this technique being used in specectroscopy. The nonlinearities inherent in the a state-of-the-art example of ‘blind’ deconvolution in which an unknown object is blurred with an unknown point-spread-function: maximum entropy can recover both.},
	language = {en},
	number = {6},
	urldate = {2024-07-28},
	journal = {IEE Proceedings F (Communications, Radar and Signal Processing)},
	author = {Gull, S. F. and Skilling, J.},
	month = oct,
	year = {1984},
	note = {Publisher: IET Digital Library},
	pages = {646--659},
}

@article{baldwin_use_2009,
	title = {Use of {Maximum} {Entropy} {Modeling} in {Wildlife} {Research}},
	volume = {11},
	copyright = {http://creativecommons.org/licenses/by/3.0/},
	issn = {1099-4300},
	url = {https://www.mdpi.com/1099-4300/11/4/854},
	doi = {10.3390/e11040854},
	abstract = {Maximum entropy (Maxent) modeling has great potential for identifying distributions and habitat selection of wildlife given its reliance on only presence locations. Recent studies indicate Maxent is relatively insensitive to spatial errors associated with location data, requires few locations to construct useful models, and performs better than other presence-only modeling approaches. Further advances are needed to better define model thresholds, to test model significance, and to address model selection. Additionally, development of modeling approaches is needed when using repeated sampling of known individuals to assess habitat selection. These advancements would strengthen the utility of Maxent for wildlife research and management.},
	language = {en},
	number = {4},
	urldate = {2025-06-17},
	journal = {Entropy},
	author = {Baldwin, Roger A.},
	month = dec,
	year = {2009},
	note = {Number: 4
Publisher: Molecular Diversity Preservation International},
	keywords = {habitat selection models, maxent, maximum entropy, species distribution models, wildlife},
	pages = {854--866},
}

@article{clarke_information_2007,
	series = {'{Information} and {Entropy} {Econometrics}'},
	title = {Information optimality and {Bayesian} modelling},
	volume = {138},
	issn = {0304-4076},
	url = {https://www.sciencedirect.com/science/article/pii/S0304407606000765},
	doi = {10.1016/j.jeconom.2006.05.003},
	abstract = {The general approach of treating a statistical problem as one of information processing led to the Bayesian method of moments, reference priors, minimal information likelihoods, and stochastic complexity. These techniques rest on quantities that have physical interpretations from information theory. Current work includes: the role of prediction, the emergence of data dependent priors, the role of information measures in model selection, and the use of conditional mutual information to incorporate partial information.},
	number = {2},
	urldate = {2024-08-10},
	journal = {Journal of Econometrics},
	author = {Clarke, Bertrand},
	month = jun,
	year = {2007},
	keywords = {Bayesian method of moments, Data dependent priors, Entropy, Reference priors, Stochastic complexity},
	pages = {405--429},
}

@inproceedings{giffin_updating_2007,
	title = {Updating {Probabilities} with {Data} and {Moments}},
	volume = {954},
	url = {http://arxiv.org/abs/0708.1593},
	doi = {10.1063/1.2821302},
	abstract = {We use the method of Maximum (relative) Entropy to process information in the form of observed data and moment constraints. The generic "canonical" form of the posterior distribution for the problem of simultaneous updating with data and moments is obtained. We discuss the general problem of non-commuting constraints, when they should be processed sequentially and when simultaneously. As an illustration, the multinomial example of die tosses is solved in detail for two superficially similar but actually very different problems.},
	urldate = {2024-07-14},
	booktitle = {{AIP} {Conference} {Proceedings}},
	author = {Giffin, Adom and Caticha, Ariel},
	year = {2007},
	note = {ISSN: 0094243X
arXiv:0708.1593 [physics, stat]},
	keywords = {Computer Science - Information Theory, Mathematics - Statistics Theory, Physics - Computational Physics, Physics - Data Analysis, Statistics and Probability, Physics - Popular Physics, Statistics - Applications, Statistics - Computation, Statistics - Methodology},
	pages = {74--84},
}

@book{cover_elements_2001,
	address = {Hoboken, NJ},
	title = {Elements of information theory},
	isbn = {978-0-471-06259-2 978-0-471-20061-1},
	language = {en},
	publisher = {Wiley-Interscience},
	author = {Cover, Thomas M. and Thomas, Joy A.},
	year = {2001},
	note = {},
}

@article{uffink_constraint_1996,
	title = {The constraint rule of the maximum entropy principle},
	volume = {27},
	issn = {1355-2198},
	url = {https://www.sciencedirect.com/science/article/pii/1355219895000224},
	doi = {10.1016/1355-2198(95)00022-4},
	abstract = {The principle of maximum entropy is a method for assigning values to probability distributions on the basis of partial information. In usual formulations of this and related methods of inference one assumes that this partial information takes the form of a constraint on allowed probability distributions. In practical applications, however, the information consists of empirical data. A constraint rule is then employed to construct constraints on probability distributions out of these data. Usually one adopts the rule that equates the expectation values of certain functions with their empirical averages. There are, however, various other ways in which one can construct constraints from empirical data, which makes the maximum entropy principle lead to very different probability assignments. This paper shows that an argument by Jaynes to justify the usual constraint rule is unsatisfactory and investigates several alternative choices. The choice of a constraint rule is also shown to be of crucial importance to the debate on the question whether there is a conflict between the methods of inference based on maximum entropy and Bayesian conditionalization.},
	number = {1},
	urldate = {2025-06-07},
	journal = {Studies in History and Philosophy of Science Part B: Studies in History and Philosophy of Modern Physics},
	author = {Uffink, Jos},
	month = mar,
	year = {1996},
	note = {},
	pages = {47--79},
}

@article{borwein_partially_1992,
	title = {Partially finite convex programming, {Part} {I}: {Quasi} relative interiors and duality theory},
	volume = {57},
	issn = {1436-4646},
	shorttitle = {Partially finite convex programming, {Part} {I}},
	url = {https://doi.org/10.1007/BF01581072},
	doi = {10.1007/BF01581072},
	abstract = {We study convex programs that involve the minimization of a convex function over a convex subset of a topological vector space, subject to a finite number of linear inequalities. We develop the notion of the quasi relative interior of a convex set, an extension of the relative interior in finite dimensions. We use this idea in a constraint qualification for a fundamental Fenchel duality result, and then deduce duality results for these problems despite the almost invariable failure of the standard Slater condition. Part II of this work studies applications to more concrete models, whose dual problems are often finite-dimensional and computationally tractable.},
	language = {en},
	number = {1},
	urldate = {2024-10-25},
	journal = {Mathematical Programming},
	author = {Borwein, J. M. and Lewis, A. S.},
	month = may,
	year = {1992},
	keywords = {Convex programming, Fenchel duality, Primary 90C25, 49B27, Secondary 90C48, 52A07, 65K05, constraint qualification, duality, semi-infinite programming},
	pages = {15--48},
}

@article{borwein_duality_1991,
	title = {Duality {Relationships} for {Entropy}-{Like} {Minimization} {Problems}},
	volume = {29},
	journal = {SIAM Journal on Control and Optimization},
	copyright = {[Copyright] © 1991 Society for Industrial and Applied Mathematics},
	issn = {03630129},
	url = {https://www.proquest.com/docview/925960906/abstract/82BD60B846AF4ADCPQ/1},
	doi = {10.1137/0329017},
	abstract = {This paper considers the minimization of a convex integral functional over the positive cone of an \$L\_p \$ space, subject to a finite number of linear equality constraints. Such problems arise in spectral estimation, where the bjective function is often entropy-like, and in constrained approximation. The Lagrangian dual problem is finite-dimensional and unconstrained. Under a quasi-interior constraint qualification, the primal and dual values are equal, with dual attainment. Examples show the primal value may not be attained. Conditions are given that ensure that the primal optimal solution can be calculated directly from a dual optimum. These conditions are satisfied in many examples.},
	language = {English},
	number = {2},
	urldate = {2024-07-28},
	author = {Borwein, J. M. and Lewis, A. S.},
	year = {1991},
	note = {Num Pages: 14
Place: Philadelphia, United States
Publisher: Society for Industrial and Applied Mathematics},
	keywords = {Applied mathematics, Approximation, Entropy, Mathematics, Optimization},
	pages = {14},
	abstract = { This paper considers the minimization of a convex integral functional over the positive cone of an \$L\_p \$ space, subject to a finite number of linear equality constraints. Such problems arise in spectral estimation, where the bjective function is often entropy-like, and in constrained approximation. The Lagrangian dual problem is finite-dimensional and unconstrained. Under a quasi-interior constraint qualification, the primal and dual values are equal, with dual attainment. Examples show the primal value may not be attained. Conditions are given that ensure that the primal optimal solution can be calculated directly from a dual optimum. These conditions are satisfied in many examples. }
}

@incollection{easwaran_conditional_2019,
	title = {Conditional {Probabilities}},
	booktitle = {The {Open} {Handbook} of {Formal} {Epistemology}},
	author = {Easwaran, Kenny},
	editor = {Pettigrew, Richard and Weisberg, Jonathan},
	year = {2019},
	note = {},
	pages = {131--198},
	publisher = {PhilPapers Foundation},
}

@incollection{klenke_conditional_2020,
	address = {Cham},
	title = {Conditional {Expectations}},
	isbn = {978-3-030-56402-5},
	url = {https://doi.org/10.1007/978-3-030-56402-5_8},
	abstract = {If there is partial information on the outcome of a random experiment, the probabilities for the possible events may change. The concept of conditional probabilities and conditional expectations formalizes the corresponding calculus.},
	language = {en},
	urldate = {2025-06-09},
	booktitle = {Probability {Theory}: {A} {Comprehensive} {Course}},
	publisher = {Springer International Publishing},
	author = {Klenke, Achim},
	editor = {Klenke, Achim},
	year = {2020},
	doi = {10.1007/978-3-030-56402-5_8},
	note = {},
	pages = {191--212},
}

@book{borel_ements_1909,
	address = {Paris},
	title = {Eléments de la théorie des probabilités.: {Probabilités} discontinues - {Probabilités} continues - {Probabilités} des causes.},
	shorttitle = {Eléments de la théorie des probabilités.},
	language = {fre},
	publisher = {A. Hermann},
	author = {Borel, Emile},
	year = {1909},
	note = {Open Library ID: OL23297233M},
	keywords = {Probabilities.},
}

@article{jaynes_information_1957,
	title = {Information {Theory} and {Statistical} {Mechanics}},
	volume = {106},
	url = {https://link.aps.org/doi/10.1103/PhysRev.106.620},
	doi = {10.1103/PhysRev.106.620},
	abstract = {Information theory provides a constructive criterion for setting up probability distributions on the basis of partial knowledge, and leads to a type of statistical inference which is called the maximum-entropy estimate. It is the least biased estimate possible on the given information; i.e., it is maximally noncommittal with regard to missing information. If one considers statistical mechanics as a form of statistical inference rather than as a physical theory, it is found that the usual computational rules, starting with the determination of the partition function, are an immediate consequence of the maximum-entropy principle. In the resulting "subjective statistical mechanics," the usual rules are thus justified independently of any physical argument, and in particular independently of experimental verification; whether or not the results agree with experiment, they still represent the best estimates that could have been made on the basis of the information available.},
	number = {4},
	urldate = {2025-06-10},
	journal = {Physical Review},
	author = {Jaynes, E. T.},
	month = may,
	year = {1957},
	note = {Publisher: American Physical Society},
	pages = {620--630},
}

@book{jaynes_probability_2003,
	title = {Probability {Theory}: {The} {Logic} of {Science}},
	url = {http://bayes.wustl.edu/etj/prob/book.pdf},
	urldate = {2017-04-11},
	publisher = {Cambridge University Press},
	author = {Jaynes, E.T.},
	year = {2003},
}

@book{lee_introduction_2018,
	address = {Cham},
	series = {Graduate {Texts} in {Mathematics}},
	title = {Introduction to {Riemannian} {Manifolds}},
	copyright = {http://www.springer.com/tdm},
	isbn = {978-3-319-91754-2 978-3-319-91755-9},
	url = {http://link.springer.com/10.1007/978-3-319-91755-9},
	urldate = {2025-07-19},
	publisher = {Springer International Publishing},
	author = {Lee, John M.},
	year = {2018},
	doi = {10.1007/978-3-319-91755-9},
	note = {ISSN: 0072-5285, 2197-5612},
	keywords = {Gauss-Bonnet theorem, Jacobi fields, Levi-Cevita connection, Riemannian geometry, Riemannian geometry course textbook, Riemannian metrics, Riemannian submanifolds, comparison theory, curvature, curvature and topology, differential geometry textbook, geodesics, graduate mathematics textbook, manifold, tensor},
}
\appendix

\section{Conditional Probability}\label{sec: conditional proba}

    This appendix revisits classical results on conditional probability, presenting modernized proofs that align with the spirit of our framework. 
    Specifically, we illustrate the connection between the Maximum Entropy Principle and Bayes’ rule in the context of updating a probability measure upon observing an event.

    Let \(A\) and \(B\) be events in an event space. 
    The \(\sigma\)-algebra \(\sigma(\{A,B\})\) generated by these events consists of the four elementary events:
    
    \begin{equation}\label{eq: elementaries for A and B}
         \{  A \cap B, A \cap B^\complement, A^\complement \cap B, A^\complement \cap B^\complement\} \text{ .}
    \end{equation}

    Suppose a prior probability measure \(\mathbb{P}\) is defined on this \(\sigma\)-algebra. 
    Upon observing that event \(A\) has occurred, we seek to update \(\mathbb{P}\) to a posterior measure \(\mathbb{Q}\). 
    Bayes’ rule provides a direct solution via the conditional probability \(\Prob{}{. \mid A}\) \eqref{def: conditional probability}.

    Alternatively, we may approach this inference problem by minimizing the relative entropy of \(\mathbb{Q}\) with respect to \(\mathbb{P}\), subject to the constraint that \(\mathbb{Q}(A)=1\) as \(A\) has been observed. 
    That is, we solve:

    \begin{subequations}\label{eq: ME optimisation problem for original Baye's formula}
    \begin{align}
        &\inf  &\Ent{\mathbb{P}}{\mathbb{Q}}\, ,\\ 
        & \textit{subject to } &\mathbb{Q}  \text{ probability measure on } \sigma(\{A,B\}) \text{,} \label{eq: simple case condition  1}\\ 
        & &\mathbb{Q}(A) = 1   \text{.}  \label{eq: simple case condition  2}
    \end{align}
    \end{subequations}
    
    \begin{theo}\label{theo: deriving conditional proba from MaxEnt}
     Suppose that \(\mathbb{P}(B)>0\). 
     Then the solution of \eqref{eq: ME optimisation problem for original Baye's formula} conditioned to \eqref{eq: simple case condition  1} and \eqref{eq: simple case condition  2} is the conditional probability $\Prob{}{. \mid A}$.  
    \end{theo}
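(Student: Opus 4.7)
The plan is to exploit the finiteness of the underlying $\sigma$-algebra. Since $\sigma(\{A,B\})$ consists of exactly the four atoms listed in \eqref{eq: elementaries for A and B}, any probability measure $\mathbb{Q}$ on this algebra is uniquely determined by the vector of masses it assigns to these atoms. Accordingly, I would first reduce the infinite-looking variational problem to a finite-dimensional one by observing that the constraint \(\mathbb{Q}(A)=1\) forces \(\mathbb{Q}(A^\complement\cap B)=\mathbb{Q}(A^\complement\cap B^\complement)=0\), so that \(\mathbb{Q}\) is parametrized by a single scalar \(q\defeq \mathbb{Q}(A\cap B)\in[0,1]\), with \(\mathbb{Q}(A\cap B^\complement)=1-q\).

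Next, I would write out \(\Ent{\mathbb{P}}{\mathbb{Q}}\) in closed form. Finiteness of the relative entropy requires \(\mathbb{Q}\ll\mathbb{P}\); assuming \(\mathbb{P}(A\cap B)>0\) and \(\mathbb{P}(A\cap B^\complement)>0\) (the generic interior case—boundary cases, where one of these atoms has zero prior mass, follow by the convention $0\log 0 = 0$ and force the corresponding $q$ to $0$ or $1$), the Radon--Nikodym derivative is constant on each atom and
\begin{equation*}
\Ent{\mathbb{P}}{\mathbb{Q}} = q\,\lnr{\frac{q}{\mathbb{P}(A\cap B)}} + (1-q)\,\lnr{\frac{1-q}{\mathbb{P}(A\cap B^\complement)}}.
\end{equation*}

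I would then minimize this strictly convex function of the single variable \(q\) on \([0,1]\). Differentiating and setting the derivative to zero yields \(q\,\mathbb{P}(A\cap B^\complement)=(1-q)\,\mathbb{P}(A\cap B)\), whence \(q^\star=\mathbb{P}(A\cap B)/\mathbb{P}(A)\). Strict convexity guarantees this is the unique minimizer. Translating back, the optimal measure assigns mass \(\mathbb{P}(A\cap B)/\mathbb{P}(A)\) to \(A\cap B\) and \(\mathbb{P}(A\cap B^\complement)/\mathbb{P}(A)\) to \(A\cap B^\complement\), which is exactly \(\Prob{}{\cdot\mid A}\) evaluated on the atoms of \(\sigma(\{A,B\})\). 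Alternatively, one could obtain the same answer by invoking Theorem~\ref{theo: dual problem of ME} directly, with \(a_1\equiv 1\), \(a_2=\mathds{1}_A\), \(b=(1,1)\): the optimal density takes the exponential-family form \(\exp(-1+\overline{\lambda}_1+\overline{\lambda}_2\mathds{1}_A)\), which is supported on \(A\) in the limit and, after normalization against \(\mathbb{P}\), coincides with \(\Prob{}{\cdot\mid A}\).

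The proof is essentially a one-line calculus exercise; there is no hard step. The only point that requires care is the boundary behaviour when one of \(\mathbb{P}(A\cap B)\), \(\mathbb{P}(A\cap B^\complement)\) vanishes, since then the parametrization is constrained to a boundary of \([0,1]\) to keep \(\mathbb{Q}\ll\mathbb{P}\); the formula \(\Prob{}{\cdot\mid A}\) still delivers the correct answer in these degenerate cases, so no real obstacle arises.
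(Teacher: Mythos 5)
Your proposal is correct and follows essentially the same route as the paper: reduce to the four atoms of \(\sigma(\{A,B\})\), note that the constraints force all mass onto \(A\cap B\) and \(A\cap B^\complement\), and minimize the resulting one-variable strictly convex function to obtain \(q^\star=\mathbb{P}(A\cap B)/\mathbb{P}(A)\). Your explicit treatment of the boundary cases where an atom has zero prior mass is a small improvement over the paper, which tacitly assumes all four atomic probabilities are positive; the sketched alternative via Theorem~\ref{theo: dual problem of ME} is not needed and would require more care (the exponential form is strictly positive, so the constraint \(\mathbb{Q}(A)=1\) is only reached in a limit of dual variables), but the main argument stands on its own.
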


    \begin{proof}

        The probability measure $\mathbb{P}$ is characterized by its value on the atomic events:

        \begin{equation}
            \begin{split}
            &\Prob{}{A \cap B} =  p_{1}>0, \quad \Prob{}{A \cap B^\complement} = p_{2}>0, \\
            &\Prob{}{A^\complement \cap B} = p_{3}>0, \quad \Prob{}{A^\complement \cap B^\complement} = p_{4}>0\,.
        \end{split}
        \end{equation}

        Let \(\mathbb{Q}\) be a probability measure on the same \(\sigma\)-algebra, with corresponding values \(q_1, q_2,q_3,q_4\) on the atomic events.
        The relative entropy is given by:

        \begin{equation}\label{eq: Shanon rel-ent}
            \Ent{\mathbb{P}}{\mathbb{Q}} = \sum_{i = 1}^4 q_i \log \frac{q_i}{p_i} \text{ .}
        \end{equation}
    
        The problem \eqref{eq: ME optimisation problem for original Baye's formula} can be rewritten as follows: 
        \begin{subequations}\label{eq: reformulation ME optimisation problem for original Baye's formula}
        \begin{align}
            &\inf  &\sum_{i=1}^4 q_i \log \frac{q_i}{p_i}\\ \label{eq: constraint 0 of ME original}
            & \textit{subject to } & \forall i  \quad  q_{i} \geq 0  \text{,} \\ \label{eq: constraint 1 of ME original}
            &  &q_{1} + q_{2} + q_{3} + q_4 = 1 \text{,} \\ \label{eq: constraint 2 of ME original}
            & & q_{1} + q_{2}= 1  \text{.} 
        \end{align}
        \end{subequations}

        Constraints \eqref{eq: constraint 0 of ME original}, \eqref{eq: constraint 1 of ME original} and \eqref{eq: constraint 2 of ME original} implies that $q_1 + q_2 = 1$ with $q_1, q_2 \geq 0$ and $q_3= q_4 = 0$. 
        We set $t=q_1$, then the solution of \eqref{eq: reformulation ME optimisation problem for original Baye's formula}  is the minimum of the function 

        \begin{equation}
            f(t) = t \ln \frac{t}{p_1} + (1-t) \ln \frac{1-t}{p_2} \,,
        \end{equation}

        on $[0,1]$  (as $\lim_{x\to 0} x\ln x=0$). 
        This function is continuously differentiable and convex on $]0,1[$, so we can look for a stationary point as global minimum.
 
        \begin{equation}\label{eq: f prime}
            f'(t) = \ln p_1 - \ln t  - \ln p_2 + \ln(1-t) 
        \end{equation}

        Solving \(f^\prime(t)=0\) from \eqref{eq: f prime}, we extract the necessary condition for extremality $\frac{p_2}{p_1}= \frac{1-t}{t}$. 
        Letting us for only possible extrema $t=\frac{p_1}{p_2+p_1}$ which is indeed a minimum. 
        So probability measure $\mathbb{Q}^{\star}$ solution of the problem is characterised by $(q^{\star}_{1}=\frac{p_1}{p_2+p_1}, q^{\star}_{2}=\frac{p_2}{p_2+p_1}, q^{\star}_{3}=0, q^{\star}_{4}=0)$ and the probability of the event $B$ is

        \begin{equation}
            \mathbb{Q}^{\star}(B) = \frac{p_1}{p_1+p_2} = \frac{\Prob{}{A \cap B}}{\Prob{}{A \cap B}+\Prob{}{A \cap B^{\complement}} } =\frac{\Prob{}{A \cap B}}{\Prob{}{ A}} = \Prob{}{B \mid A} \,,
        \end{equation}     

        so $\mathbb{Q}^{\star} = \Prob{}{. \mid B}$.
 
    \end{proof}

    Theorem \ref{theo: deriving conditional proba from MaxEnt} can be directly extended to the case where the \(\sigma\)-algebra (the universe) is generated by a finite number of events.

    \begin{prop}\label{theo:condi-proba-from-NaxEnt-positive-case}
        Let \((E,d,\mathcal{B}(E))\) be a standard Borel space with a Radon measure \(\nu\), and  let \(A\) be a measurable set. 
        Suppose \(\nu\left(A\right)>0\). Then,

        \begin{equation}\label{eq: MaxEnt gives conditional proba}
            \argmin_{\substack{\mu(E)=1\\\mu(A)=1}} \Ent{\nu}{\mu} = \frac{\mathds{1}_{A}}{\nu(A)} \nu \,.
        \end{equation}
    \end{prop}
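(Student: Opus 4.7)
The plan is to reduce the problem to the non-negativity of relative entropy (Proposition~\ref{prop: rel ent is positive}) via a change-of-reference identity. First, observe that the infimum is only meaningful among measures $\mu\ll\nu$: indeed, by Definition~\ref{def: rela-ent}, any $\mu\not\ll\nu$ has $\Ent{\nu}{\mu}=+\infty$, so if a single feasible $\mu\ll\nu$ exists with finite entropy the infimum is achieved only by absolutely continuous measures. The candidate $\mu^\star=\frac{\mathds{1}_A}{\nu(A)}\nu$ is feasible since $\nu(A)>0$ ensures it is a probability measure with $\mu^\star(A)=1$, and its entropy is finite:
\begin{equation*}
    \Ent{\nu}{\mu^\star}=\int_A \frac{1}{\nu(A)}\lnr{\frac{1}{\nu(A)}}\d\nu(x)=-\ln \nu(A)\,.
\end{equation*}

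Next, I would show that every feasible $\mu\ll\nu$ with $\mu(A)=1$ has Radon--Nikodym density $f=\frac{\d\mu}{\d\nu}$ vanishing $\nu$-a.e.\ on $A^\complement$: indeed $\mu(A^\complement)=0$ forces $\int_{A^\complement}f\d\nu=0$, hence $f=0$ $\nu$-a.e.\ outside $A$. Now define the restricted normalized measure $\nu_A\defeq\frac{\mathds{1}_A}{\nu(A)}\nu$. On $A$, the Radon--Nikodym density of $\mu$ with respect to $\nu_A$ is $\frac{\d\mu}{\d\nu_A}=\nu(A)\,f$, so a direct computation gives the chain-rule identity
\begin{equation*}
    \Ent{\nu_A}{\mu}=\int_A \nu(A) f(x)\lnr{\nu(A)f(x)}\frac{\d\nu(x)}{\nu(A)}=\Ent{\nu}{\mu}+\ln\nu(A)\,.
\end{equation*}

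Combining this identity with Proposition~\ref{prop: rel ent is positive} applied to $\mu\ll\nu_A$ yields
\begin{equation*}
    \Ent{\nu}{\mu}=\Ent{\nu_A}{\mu}-\ln\nu(A)\geq -\ln\nu(A)=\Ent{\nu}{\mu^\star}\,,
\end{equation*}
with equality if and only if $\mu=\nu_A=\mu^\star$. This simultaneously proves that $\mu^\star$ attains the infimum and that it is the unique minimizer.

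The main obstacle is essentially bookkeeping: verifying that $\mu\ll\nu_A$ whenever $\mu(A)=1$ and $\mu\ll\nu$, and justifying the change-of-reference identity through the standard properties of Radon--Nikodym derivatives. Since neither step involves any topological input and Proposition~\ref{prop: rel ent is positive} is already available, the argument stays at the measure-theoretic level and requires neither the convex duality framework of Theorem~\ref{theo: dual problem of ME} nor the metric structure used elsewhere in the paper.
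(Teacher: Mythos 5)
Your proposal is correct and follows essentially the same route as the paper's own proof: both reduce to the non-negativity of relative entropy (Proposition~\ref{prop: rel ent is positive}) via the change-of-reference identity $\Ent{\nu}{\mu}=\Ent{\nu_A}{\mu}-\ln\nu(A)$ with $\nu_A=\frac{\mathds{1}_A}{\nu(A)}\nu$. Your write-up is in fact slightly cleaner, as it makes explicit the feasibility and absolute-continuity bookkeeping and the equality case, and avoids the sign typos present in the paper's version of the computation.
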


    \begin{proof}
        
        By Proposition~\ref{prop: rel ent is positive}, any solution of \eqref{eq: supremum problem of MaxEnt for metric space simple case} is absolutely continuous with respect to \(\nu\). 
        So for any potential solution \(\mu\),  \(A\) and \(E\) are measurable sets and \(\mu(E/A)=0\).

        To find a solution to 

        \begin{equation}\label{eq: supremum problem of MaxEnt for metric space simple case}
            \inf_{\substack{\mu(E)=1\\\mu(A)=1}} \Ent{\nu}{\mu} \,,
        \end{equation}
        we consequently consider a positive measurable function \(f\) supported on \(A\) such that \(f\d{}\nu\) is a probability measure. 
        Then,

        \begin{equation}
            \begin{split}
                \Ent{\nu}{f\nu} & = - \int_{E} f \ln{f} d\nu \\
                                & = - \int_{A} f \ln{f} d\nu \\
                                & = - \int_{A} f \ln{\frac{f}{\nu(A)^{-1}}} d\nu - \int_{A} f \ln{\frac{1}{\nu(A)^{-1}}} d\nu   \\
                                & =  \Ent{\mathds{1}_{A} \frac{\nu}{\nu(A)}}{f\nu}  -  \ln{\frac{1}{\nu(A)^{-1}}}   \\
                                & \geq   - \ln{\frac{1}{\nu(A)^{-1}}}  \textit{ by Proposition~\ref{prop: rel ent is positive}}  \\
                                & = \Ent{\nu}{\mathds{1}_{A} \frac{\nu}{\nu(A)}} \,.
            \end{split}
        \end{equation}

        So the solution of \eqref{eq: supremum problem of MaxEnt for metric space simple case}  is \(\mathds{1}_{A} \frac{\nu}{\nu(A)}\).
        
    \end{proof}

\section{Convergences to delta measure}\label{app: proof of gaussian over nu convergence to delta measure}

This section contains proofs of weak convergence necessary to derive the formulas for MaxEnt posterior of the main text.

\begin{lem}\label{lem:outside-of-A-measure-collapse}
    Consider the hypothesis of Definition~\ref{def:general bayes rule}.
    Let \(0<\varepsilon<R\). 
    Then,
    \begin{equation}\label{eq:goal-to-prove-1000}
        \mu_{a,R}\left((A^{\varepsilon})^\complement\right) \xrightarrow[a\to\infty]{} 0 \,.
    \end{equation}
\end{lem}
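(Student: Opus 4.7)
The plan is to decompose $\mu_{a,R}\bigl((A^{\varepsilon})^\complement\bigr)$ as a ratio of two integrals and carry out a Laplace-type comparison, showing that the numerator decays strictly faster than the denominator as $a \to \infty$. By Theorem~\ref{theo:general bayes rule has a coherent solution}, one may write
\begin{equation*}
\mu_{a,R}\bigl((A^{\varepsilon})^\complement\bigr) = \frac{\int_{(A^{\varepsilon})^\complement} \expr{-a\, d_R(x,A)^2}\,\d{\nu(x)}}{\int_E \expr{-a\, d_R(x,A)^2}\,\d{\nu(x)}} \,,
\end{equation*}
so it suffices to bound the numerator from above and the denominator from below.

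For the numerator, I would use that on $(A^{\varepsilon})^\complement$ we have $d(x,A)^2 > \varepsilon$, which implies $d_R(x,A)^2 = \min(d(x,A)^2, R^2) \geq \kappa$, where $\kappa \defeq \min(\varepsilon, R^2) > 0$ (note $\kappa$ is strictly positive in all cases, regardless of whether $\sqrt{\varepsilon}$ exceeds $R$). Since $\nu$ is a probability measure, the numerator is at most $e^{-a\kappa}$. For the denominator, pick any $\delta \in (0,\kappa)$ and restrict the integral to $A^{\delta}$, on which $d_R(x,A)^2 \leq \delta$; this yields the lower bound $e^{-a\delta}\,\nu(A^{\delta})$. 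It then remains to observe that $\nu(A^{\delta}) > 0$: the lemma is vacuous if $A = \emptyset$, and otherwise any $x_0 \in A$ satisfies $B(x_0, \sqrt{\delta}) \subset A^{\delta}$, and the ball has strictly positive $\nu$-measure because $A \subset \text{Supp}(\nu)$ by the hypothesis of Definition~\ref{def:general bayes rule}.

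Combining the two bounds produces
\begin{equation*}
\mu_{a,R}\bigl((A^{\varepsilon})^\complement\bigr) \leq \frac{e^{-a(\kappa - \delta)}}{\nu(A^{\delta})} \,,
\end{equation*}
whose right-hand side tends to $0$ as $a \to \infty$ since $\kappa - \delta > 0$ and $\nu(A^{\delta})$ is a positive constant independent of $a$. I do not foresee a real obstacle: this is a routine Laplace-type comparison, and the only delicate point is ensuring that the tubular neighbourhood $A^{\delta}$ carries positive mass, which is an immediate consequence of the support assumption $A \subset \text{Supp}(\nu)$.
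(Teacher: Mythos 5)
Your proof is correct and follows essentially the same route as the paper's: a Laplace-type comparison in which the numerator over $(A^{\varepsilon})^\complement$ decays like $e^{-a\kappa}$ while the denominator is bounded below by the contribution of a thin tubular neighbourhood of $A$, which carries positive mass because $A\subset\text{Supp}(\nu)$. If anything you are slightly more careful than the paper's own argument, which glosses over the interaction between the truncation $d_R$ and the squared-distance convention in the definition of $A^\eta$ (your constant $\kappa=\min(\varepsilon,R^2)$) and leaves the positivity of the denominator's mass term implicit.
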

\begin{proof}
    We choose \(\varepsilon<R\), 
    
    \begin{align}
            \frac{\int_{(A^{\varepsilon})^\complement} e^{-a d_R( x , A)^2} \d{\nu(x)}}{\int_{A^{\varepsilon}} e^{-a d_R( x , A)^2} \d{\nu(x)}} 
            &= \frac{\int_{(A^{\varepsilon})^\complement} e^{-a d_R( x , A)^2} \d{\nu(x)}}{\int_{A^{\varepsilon}/A^{\frac{\varepsilon}{2}}} e^{-a d_R( x , A)^2} \d{\nu(x)} + \int_{A^{\frac{\varepsilon}{2}}} e^{-a d_R( x , A)^2} \d{\nu(x)} } \\
            &\leq \frac{\nu\left((A^{\varepsilon})^\complement\right) e^{-a \varepsilon^2} }{ \nu\left(A^{\varepsilon}/A^{\frac{\varepsilon}{2}}\right) e^{-a \varepsilon^2} + \nu\left(A^{\frac{\varepsilon}{2}}\right) e^{-a \frac{\varepsilon^2}{4}} } \\
            &= \frac{\nu\left((A^{\varepsilon})^\complement\right)  }{ \nu\left(A^{\varepsilon}/A^{\frac{\varepsilon}{2}}\right)  + \nu\left(A^{\frac{\varepsilon}{2}}\right) e^{a\varepsilon^2 (1- \frac{1}{4})} } \\
            & \xrightarrow[a \to \infty]{} 0 \,,
    \end{align}

    which proves \eqref{eq:goal-to-prove-1000}.
\end{proof}

 \begin{lem}\label{lem: exponentiated distance convergence to delta measure}
    Let us consider $\mathcal{N}$ be a Radon measure on a \(\sigma\)-compact standard Borel space $(F,d)$. 
    Consider \(R>0\) and \(p\in L^1_{\mathcal{N}}\) a continuous function.
    Suppose \(\hat{y}\in \text{int}(\text{Supp}(\mathcal{N}))\), and \(y\mapsto \expr{-a d^2_R(y,\hat{y})}\in L^1_{\mathcal{N}}\) for \(a>0\). 
    Then,
    
        \begin{equation}\label{eq:expo-normal-probability-measure_2}
           \int_F \frac{e^{-ad_R( y , \hat{y})^2}}{\int_F e^{-a d( y , \hat{y})^2} \d{}\mathcal{N}(y)} p(y) \d{}\mathcal{N}(y) \,\, \xrightarrow[a \to \infty ] \,\, p(\hat{y})  \,.
        \end{equation}

\end{lem}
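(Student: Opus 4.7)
The plan is to treat $Z_a := \int_F e^{-a d_R(y,\hat{y})^2}\,\d{}\mathcal{N}(y)$ as a normalizing constant, so that $\mu_a := Z_a^{-1} e^{-a d_R(\cdot,\hat{y})^2}\cdot\mathcal{N}$ is a family of probability measures that concentrate at $\hat{y}$ as $a\to\infty$, and then to show $\int p\,\d\mu_a \to p(\hat{y})$. Because $p$ is only assumed to be in $L^1_{\mathcal{N}}$ and not bounded, weak convergence alone is insufficient; instead I would split the integral into contributions from a small ball around $\hat{y}$ (where continuity of $p$ applies) and its complement (where the exponential kernel decays uniformly).

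The first step is a lower bound on $Z_a$. Fix $\varepsilon \in (0, R)$ small enough that $B(\hat{y},\varepsilon) \subset \text{Supp}(\mathcal{N})$, which is possible since $\hat{y}\in\text{int}(\text{Supp}(\mathcal{N}))$. On $B(\hat{y},\varepsilon/2)$ one has $d_R(y,\hat{y}) \le \varepsilon/2$, so
\begin{equation*}
Z_a \;\ge\; e^{-a\varepsilon^2/4}\,\mathcal{N}\bigl(B(\hat{y},\varepsilon/2)\bigr),
\end{equation*}
where $\mathcal{N}(B(\hat{y},\varepsilon/2))>0$ by the choice of $\varepsilon$. The second step is a tail bound: on $F\setminus B(\hat{y},\varepsilon)$ one has $d_R(y,\hat{y}) \ge \varepsilon$, hence
\begin{equation*}
\left|\,\int_{F\setminus B(\hat{y},\varepsilon)} p(y)\,e^{-a d_R^2}\,\d{}\mathcal{N}\right| \;\le\; e^{-a\varepsilon^2}\,\|p\|_{L^1_{\mathcal{N}}}.
\end{equation*}
Dividing by $Z_a$ yields a bound of order $e^{-3a\varepsilon^2/4}\,\|p\|_{L^1_{\mathcal{N}}}/\mathcal{N}(B(\hat{y},\varepsilon/2))$, which vanishes. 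Applied with $p\equiv 1$ the same computation gives $\mu_a(B(\hat{y},\varepsilon))\to 1$. For the contribution from the ball, continuity of $p$ at $\hat{y}$ lets me shrink $\varepsilon$ further so that $|p(y)-p(\hat{y})|<\delta$ on $B(\hat{y},\varepsilon)$, and then
\begin{equation*}
\left|\,\int_{B(\hat{y},\varepsilon)} p\,\d\mu_a - p(\hat{y})\,\mu_a\bigl(B(\hat{y},\varepsilon)\bigr)\right| \;\le\; \delta.
\end{equation*}
Combining the three pieces gives $\int p\,\d\mu_a \to p(\hat{y})$.

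The main obstacle is juggling the two scales: to obtain a strictly positive exponential gap between the normalizer and the tail one must use two distinct radii ($\varepsilon/2$ for the lower bound on $Z_a$, $\varepsilon$ for the tail estimate), and the order of quantifiers is essential — continuity of $p$ dictates the choice of $\varepsilon$ first, and only afterwards one sends $a\to\infty$. The hypothesis $\hat{y}\in\text{int}(\text{Supp}(\mathcal{N}))$ is crucial so that the small balls around $\hat{y}$ carry strictly positive $\mathcal{N}$-measure; $\sigma$-compactness of $F$ enters only indirectly, ensuring the Radon framework behaves well on the cut-off domains.
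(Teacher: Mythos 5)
Your argument follows the same route as the paper's own proof: a Laplace-type concentration estimate obtained by splitting $F$ into a small ball around $\hat y$ and its complement, using two nested radii (the smaller one, $\varepsilon/2$, for a lower bound on the normalizer, the larger one, $\varepsilon$, for the tail) to create a strictly positive exponential gap, and using continuity of $p$ only on the ball. The decomposition, the order of quantifiers, and the role of $\hat y\in\text{int}(\text{Supp}(\mathcal N))$ all match the paper's proof, which works with radii $r$ and $r/2$ in place of your $\varepsilon$ and $\varepsilon/2$.

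There is one step that fails as written. To conclude you need $\mu_a\bigl(B(\hat y,\varepsilon)\bigr)\to 1$, and you justify this by ``the same computation with $p\equiv 1$'', i.e.\ by the bound $\int_{F\setminus B(\hat y,\varepsilon)}e^{-ad_R^2}\,\d{}\mathcal N\le e^{-a\varepsilon^2}\,\mathcal N(F)$. But $\mathcal N$ is only assumed to be a Radon measure on a $\sigma$-compact space, so $\mathcal N(F)$ may be infinite---and this is precisely the regime in which the lemma is invoked (e.g.\ $\mathcal N_F$ the Lebesgue measure on $\mathbb R^q$ in Theorem~\ref{theo: Bayes rule for product space}); the constant function is then not in $L^1_{\mathcal N}$. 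The repair is short and is exactly what the hypothesis $y\mapsto e^{-ad_R^2(y,\hat y)}\in L^1_{\mathcal N}$ is there for: for $a>1$ write $e^{-ad_R^2}=e^{-(a-1)d_R^2}e^{-d_R^2}\le e^{-(a-1)\varepsilon^2}e^{-d_R^2}$ on $B(\hat y,\varepsilon)^{\complement}$, so the tail is at most $e^{-(a-1)\varepsilon^2}\int_F e^{-d_R^2}\,\d{}\mathcal N<\infty$, which still loses against the lower bound $e^{-a\varepsilon^2/4}\,\mathcal N\bigl(B(\hat y,\varepsilon/2)\bigr)$ on $Z_a$. A second, cosmetic point: the statement normalizes by $\int_F e^{-ad(y,\hat y)^2}\,\d{}\mathcal N(y)$ with the untruncated distance, whereas you normalize by $Z_a=\int_F e^{-ad_R(y,\hat y)^2}\,\d{}\mathcal N(y)$. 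Since $d=d_R$ on $B(\hat y,\varepsilon)$ for $\varepsilon<R$ and both tails are exponentially negligible by the estimate above, the two normalizers are asymptotically equivalent, but this one line should be stated to land exactly on the claimed limit.
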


\begin{proof}
    Consider \(\varepsilon>0\). 
    By continuity of $p$ at $\hat{y}$  there is $r<R$ such that 
    
    \begin{equation}
        |p(y) - p(\hat{y})| \leq \varepsilon \quad , \quad  \forall y \in \textbf{B}(\hat{y},r) 
    \end{equation}
    
    which implies that
    
    \begin{equation}\label{eq:upper-bound-1000}
        \left| \int_{\textbf{B}(\hat{y},r)}  p(y) e^{-ad_R( y , \hat{y})^2} \d{}\mathcal{N}(y) - p(\hat{y}) \int_{\textbf{B}(\hat{y},r)}  e^{-ad_R( y , \hat{y})^2} \d{}\mathcal{N}(y) \right| \leq \varepsilon \int_{\textbf{B}(\hat{y},r)} e^{- a d_R( y , \hat{y})^2} \d{}\mathcal{N}(y)\,.
    \end{equation}

    We now show that the integral \eqref{eq:expo-normal-probability-measure_2} is negligeable outside of the neighbourhood of \(\hat{y}\),

    \begin{subequations}\label{eq:negligeable far from y hat}
    \begin{align}
        \frac{\int_{\textbf{B}(\hat{y},r)^\complement} p(y) e^{-ad_R( y , \hat{y})^2} \d{}\mathcal{N}(y) }{\int_{F} e^{-a d( y , \hat{y})^2} \d{}\mathcal{N}(y)} 
        & \leq \frac{ e^{-a r^2} \int_{\textbf{B}(\hat{y},r)^\complement} |p(y)| \d{}\mathcal{N}(y) }{\int_{\textbf{B}(\hat{y},r)} e^{-a d( y , \hat{y})^2} \d{}\mathcal{N}(y)} \\
        & = \frac{ e^{-a r^2} \int_{\textbf{B}(\hat{y},r)^\complement} |p(y)| \d{}\mathcal{N}(y) }{\int_{\textbf{B}(\hat{y},\frac{r}{2})} e^{-a d( y , \hat{y})^2} \d{}\mathcal{N}(y) + \int_{\textbf{B}(\hat{y},r)/\textbf{B}(\hat{y},\frac{r}{2})} e^{-a d( y , \hat{y})^2} \d{}\mathcal{N}(y)} \\
        & \leq \frac{ e^{-a r^2} \int_{\textbf{B}(\hat{y},r)^\complement} |p(y)| \d{}\mathcal{N}(y)}{ e^{-a \left(\frac{r}{2}\right)^2} \mathcal{N}\left(\textbf{B}(\hat{y},\frac{r}{2})\right) + e^{-a r^2} \mathcal{N}\left(\textbf{B}(\hat{y},r)/\textbf{B}(\hat{y},\frac{r}{2})\right)} \\
        & = \frac{ \int_{\textbf{B}(\hat{y},r)^\complement} |p(y)| \d{}\mathcal{N}(y)}{ e^{a r^2\left(1- \frac{1}{4}\right)} \mathcal{N}\left(\textbf{B}(\hat{y},\frac{r}{2})\right) +  \mathcal{N}\left(\textbf{B}(\hat{y},r)/\textbf{B}(\hat{y},\frac{r}{2})\right)} \\   
        & \xrightarrow[a\to\infty]{} 0 \,.
    \end{align}
    \end{subequations}

    The calculus of \eqref{eq:negligeable far from y hat} can be reproduce to prove

    \begin{align}
        \frac{\int_{\textbf{B}(\hat{y},r)^\complement} e^{-ad( y , \hat{y})^2} \d{}\mathcal{N}(y) }{\int_{F} e^{-a d( y , \hat{y})^2} \d{}\mathcal{N}(y)}  \xrightarrow[a\to\infty]{} 0
    \end{align}

    and consequently 

    \begin{align}\label{eq:equiv_1000}
        \int_{\textbf{B}(\hat{y},r)} e^{-ad( y , \hat{y})^2} \d{}\mathcal{N}(y) \sim_{a\to\infty} \int_{F} e^{-ad( y , \hat{y})^2} \d{}\mathcal{N}(y)  \,.
    \end{align}

    From \eqref{eq:negligeable far from y hat} and \eqref{eq:equiv_1000}, we deduce 

    \begin{align}\label{eq:equiv_2000}
        \frac{\int_{F} p(y) e^{-ad_R( y , \hat{y})^2} \d{}\mathcal{N}(y) }{\int_{F} e^{-ad( y , \hat{y})^2} \d{}\mathcal{N}(y)} \sim_{a\to\infty} \frac{\int_{\textbf{B}(\hat{y},r)} p(y) e^{-ad_R( y , \hat{y})^2} \d{}\mathcal{N}(y) }{\int_{\textbf{B}(\hat{y},r)} e^{-ad( y , \hat{y})^2} \d{}\mathcal{N}(y)}\,.
    \end{align}

    From \eqref{eq:upper-bound-1000} and \eqref{eq:equiv_2000} we conclude \eqref{eq:expo-normal-probability-measure_2}.

\end{proof}

\subsection{Proof of Proposition~\ref{theo: Bayesian posterior on the great circle with geodesic distance}}\label{subsec: proof of proposition bayesian posterior on the great circle with geodesic distance}

    We consider the space \((\mathbb{S}^2,d_g,\nu)\) where \(\mathbb{S}^2\) is the unit sphere, \(d_g\) is the geodesic distance and \(\nu\) is the Lebesgue measure on \(\mathbb{S}^2\). 
    The geodesic distance \(d_g\) is defined as the length of the shortest path between two points on the sphere, which corresponds to the angle between them in radians.

 On the space \((\mathbb{S}^2,d_g,\nu)\), we compute a Bayes posterior for the two great circles \(\{\varphi=0\}\) and \(\{\theta = 0\}\cup \{\theta =  \pi\}\). 
    The geodesic distance \(d_g\) implies that,

    \begin{equation}\label{eq: geodesic distance to great circles}
        d_g((\theta,\varphi),\{\varphi=0\}) = |\varphi| \quad \text{and,} \quad d_g((\theta,\varphi),\{\theta = 0\}\cup \{\theta =  \pi\}) = |\theta \cos{\varphi}| \,.
    \end{equation}

   Definition~\ref{def:general bayes rule} defines the Bayesian posterior for the great-circle \(\{\varphi=0\}\) as the limit of 

    \begin{align}\label{eq: posterior of phi equal zero great circle with geodesic distance}
       \mu_a( \varphi, \theta) =  C_a e^{-a\varphi^2} \cos{\varphi} \frac{\mathrm{d}\varphi \mathrm{d}\theta }{4\pi}\, , 
    \end{align}
    
    with
    
    \begin{align}
       C_a^{-1}  = \int_{-\pi}^{\pi} \int_{\frac{\pi}{2}}^{\frac{\pi}{2}} e^{-a\varphi^2} \cos{\varphi} \frac{\mathrm{d}\varphi \mathrm{d}\theta }{4\pi} \, .
   \end{align}

   Definition~\ref{def:general bayes rule} defines the Bayesian posterior  for the great-circle \(\{\theta=0\}\cup \{\theta=\pi\}\) as the limit of 

   \begin{align}\label{eq: posterior of theta equal zero great circle with geodesic distance}
       \mu_a( \varphi, \theta) &=  C_a e^{-a(\theta \cos{\varphi})^2} \cos{\varphi} \frac{\mathrm{d}\varphi \mathrm{d}\theta}{4\pi}   \quad \text{for } \theta \in [-\frac{\pi}{2}, \frac{\pi}{2}] \, ,
    \end{align}

    with

    \begin{align}
       C_a^{-1} & = 2 \int_{\frac{\pi}{2}}^{\frac{\pi}{2}} \int_{\frac{\pi}{2}}^{\frac{\pi}{2}} e^{-a(\theta \cos{\varphi})^2} \cos{\varphi} \frac{\mathrm{d}\varphi \mathrm{d}\theta}{4\pi} \,.
    \end{align}

    We prove that \(\sqrt{a}C^{-1}_a\) converge to a specific value and do a similar calculus for \(\frac{\sqrt{a}\E{\mu_a}{f}}{C_a}\).

        Let's start with \eqref{eq: posterior of phi equal zero great circle with geodesic distance}. The convergence rate of the normalisation constant is:

        \begin{align}
            \sqrt{a}C_a^{-1} & = \sqrt{a} \int_{-\pi}^{\pi} \int_{-\frac{\pi}{2}}^{\frac{\pi}{2}} e^{-a\varphi^2} \cos{\varphi} \frac{\mathrm{d}\varphi \mathrm{d}\theta }{4\pi}\\
                    & =  \frac{\sqrt{a}}{2} \int_{-\frac{\pi}{2}}^{\frac{\pi}{2}} e^{-a\varphi^2} \cos{\varphi}\mathrm{d}\varphi \,, \\
                    & \text{ then by variable change } \; \psi(\varphi) = \sqrt{a} \varphi\ \,,\\
                    & = \frac{1}{2} \int_{-\sqrt{a}\frac{\pi}{2}}^{\sqrt{a}\frac{\pi}{2}} e^{-\psi^2} \cos{\frac{\psi}{\sqrt{a}}}\mathrm{d}\psi \\\label{eq: limit of normalisation constant for phi equal zero great circle with geodesic distance}
                    & \xrightarrow[a\to\infty]{} \frac{1}{2} \cos{0} \int_{\mathbb{R}} e^{-\psi^2} \mathrm{d}\psi \, .
        \end{align}

        We introduce a bounded and continuous real value function \(f\) to prove weak convergence.

        \begin{align}
            \E{\mu_a}{f} &= C_a \int_{-\pi}^{\pi} \int_{-\frac{\pi}{2}}^{\frac{\pi}{2}} f(\varphi, \theta) e^{-a\varphi^2} \cos{\varphi} \frac{\mathrm{d}\varphi \mathrm{d}\theta }{4\pi} \\
                        % &= \frac{C_a}{\sqrt{a}} \int_{-\pi}^{\pi} \int_{-\sqrt{a}\frac{\pi}{2}}^{\sqrt{a}\frac{\pi}{2}} f(\varphi, \theta) e^{-a\varphi^2} \cos{\varphi} \frac{\mathrm{d}\varphi \mathrm{d}\theta }{4\pi} \\
                        &= \frac{C_a}{\sqrt{a}} \frac{1}{4\pi}\int_{-\sqrt{a}\frac{\pi}{2}}^{\sqrt{a}\frac{\pi}{2}}  \left(\int_{-\pi}^{\pi}f(\frac{\psi}{\sqrt{a}}, \theta)\mathrm{d}\theta\right)  e^{-\psi^2} \cos{\frac{\psi}{\sqrt{a}}} \mathrm{d}\psi \\
                        & \xrightarrow[a\to\infty]{} \frac{1}{2\pi} \cos{0} \left(\int_{-\pi}^{\pi}f(0, \theta) \mathrm{d}\theta\right) \quad \text{ by \eqref{eq: limit of normalisation constant for phi equal zero great circle with geodesic distance}} \, .
        \end{align}

        So the Bayesian posterior on \(\{\varphi=0\}\) is indeed the uniform distribution with respect to the Lebesgue measure on \(]-\pi,\pi]\).

        Now for \eqref{eq: posterior of theta equal zero great circle with geodesic distance}, the normalisation constant convergence rate is :

        \begin{align}
            \frac{1}{2}\sqrt{a}C_a^{-1} & = \sqrt{a} \int_{-\frac{\pi}{2}}^{\frac{\pi}{2}} \int_{-\frac{\pi}{2}}^{\frac{\pi}{2}} e^{-a(\theta \cos{\varphi})^2} \cos{\varphi} \frac{\mathrm{d}\varphi \mathrm{d}\theta }{4\pi}\\
                    & =  \frac{\sqrt{a}}{4\pi} \int_{-\frac{\pi}{2}}^{\frac{\pi}{2}} \left( \int_{-\frac{\pi}{2}}^{\frac{\pi}{2}} e^{-a(\theta \cos{\varphi})^2}  \mathrm{d}\theta  \right) \cos{\varphi}  \mathrm{d}\varphi \,, \\
                    & \text{then by variable change } \; \psi(\theta) = \sqrt{a} \theta \cos{\varphi} \,,\\
                    & =  \frac{1}{4\pi} \int_{-\frac{\pi}{2}}^{\frac{\pi}{2}} \left( \int_{-\sqrt{a} \cos{(\varphi)} \frac{\pi}{2}}^{\sqrt{a} \cos{(\varphi)} \frac{\pi}{2}} e^{-\psi^2}  \mathrm{d}\psi  \right) \frac{\cos{\varphi}}{\cos{\varphi}} \mathrm{d}\varphi  \\\label{eq: limit of normalisation constant for theta equal zero great circle with geodesic distance}
                    & \xrightarrow[a\to\infty]{} \frac{1}{4\pi} \int_{-\frac{\pi}{2}}^{\frac{\pi}{2}} \left( \int_{\mathbb{R}} e^{-\psi^2}  \mathrm{d}\psi  \right) \mathrm{d}\varphi  = \frac{1}{4} \int_{\mathbb{R}} e^{-\psi^2}  \mathrm{d}\psi  \,.
        \end{align}

        Again we introduce a bounded and continuous real value function \(f\) to prove weak convergence. We calculate \(\E{\mu_a}{f}\) in two symmetric parts. The first  half sphere \(\theta\in [-\frac{\pi}{2},\frac{\pi}{2}]\):

        \begin{align}
            \E{\mu_a}{f \mathds{1}_{\theta\in [-\frac{\pi}{2},\frac{\pi}{2}]}}  & =  C_a  \int_{-\frac{\pi}{2}}^{\frac{\pi}{2}} \int_{-\frac{\pi}{2}}^{\frac{\pi}{2}} f(\varphi, \theta) e^{-a(\theta \cos{\varphi})^2} \cos{\varphi} \frac{\mathrm{d}\varphi \mathrm{d}\theta }{4\pi}\\
                    & =  \frac{C_a}{4\pi} \int_{-\frac{\pi}{2}}^{\frac{\pi}{2}} \left( \int_{-\pi}^{\pi} f(\varphi, \theta) e^{-a(\theta \cos{\varphi})^2}  \mathrm{d}\theta  \right) \cos{\varphi}  \mathrm{d}\varphi \,, \\
                    &  \text{then by variable change } \; \psi(\theta) = \sqrt{a} \theta \cos{\varphi} \,,\\
                    & =  \frac{C_a}{\sqrt{a}4\pi} \int_{-\frac{\pi}{2}}^{\frac{\pi}{2}} \left( \int_{-\sqrt{a} \cos{(\varphi)} \pi}^{\sqrt{a} \cos{(\varphi)} \pi}  f(\varphi, \frac{\psi}{\sqrt{a}\cos{\varphi}}) e^{-\psi^2}  \mathrm{d}\psi  \right) \frac{\cos{\varphi}}{\cos{\varphi}} \mathrm{d}\varphi  \\\label{eq:pre-result-equator-geodesic-1}
                    & \xrightarrow[a\to\infty]{} \frac{1}{2\pi} \int_{-\frac{\pi}{2}}^{\frac{\pi}{2}}  f(\varphi,0)  \mathrm{d}\varphi   \quad \text{ by \eqref{eq: limit of normalisation constant for theta equal zero great circle with geodesic distance}} \, .
        \end{align}

        And similarly for \(\theta\in [-\pi,-\frac{\pi}{2}] \cup [\frac{\pi}{2},\pi]\), we have:

        \begin{equation}\label{eq:pre-result-equator-geodesic-2}
           \E{\mu_a}{f \mathds{1}_{\theta\in [-\pi,-\frac{\pi}{2}] \cup [\frac{\pi}{2},\pi]}}  \xrightarrow[a\to\infty]{} \frac{1}{2\pi} \int_{-\frac{\pi}{2}}^{\frac{\pi}{2}}  f(\varphi,\pi)  \mathrm{d}\varphi \, .
        \end{equation}

        Combining \eqref{eq:pre-result-equator-geodesic-1} and \eqref{eq:pre-result-equator-geodesic-2}, we conclud

        \begin{equation}
            \E{\mu_a}{f} \xrightarrow[a\to\infty]{} \frac{1}{2\pi} \int_{-\frac{\pi}{2}}^{\frac{\pi}{2}} \left(f(\varphi,0) +  f(\varphi,\pi)\right) \mathrm{d}\varphi \,.
        \end{equation}

        The Bayesian posterior is a uniform distribution with respect to the Lebesgue measure on the great circle \(\{\theta = 0\}\cup \{\theta = \pi\}\).

\subsection{Proof of Proposition~\ref{theo: Bayesian posterior on the great circle with map-projection distance}}\label{proof: bayesian posterior on the great circle with map-projection distance}

    The distance to the equator \(\{\varphi=0\}\) according to both the non Euclidean distance \eqref{eq: kronecker distance to phi zero circle} and the geodesic distance \eqref{eq: geodesic distance to great circles} are both equals. 

    \begin{equation}\label{eq: kronecker distance to phi zero circle}
        \widetilde{d}((\theta,\varphi),\{\varphi=0\})  = |\varphi| 
    \end{equation}

    Using Proposition~\ref{theo: Bayesian posterior on the great circle with geodesic distance}, we conclude that the Bayes posterior of the couple \((\nu, \widetilde{d},\{\varphi=0\})\) is the uniform probability distribution with respect to the Lebesgue measure, which proves \eqref{eq: posterior on equator}.
    
    The distance to a meridian like  \(\{\theta = 0\}\cup \{\theta = \pi\}\) according to the non Euclidean distance \eqref{eq: kronecker distance to theta zero circle} and according to the geodesic distance \eqref{eq: geodesic distance to great circles} are different from each other.

    \begin{equation}\label{eq: kronecker distance to theta zero circle}
        \widetilde{d}((\theta,\varphi), \{\theta=0\}\cup\{\theta=\pi\})  = \min(|\theta|,|\pi-\theta|, |\pi + \theta|)
    \end{equation}

    Definition~\ref{def:general bayes rule} defines the Bayesian posterior for the great-circle \(\{\theta = 0\}\cup \{\theta = \pi\}\) as the weak limit of 

    \begin{align}\label{eq: pr-posterior of pole to pole great circle with map distance}
       \mu_a( \varphi, \theta) =  \begin{cases} 
        C_a e^{-a\theta^2} \cos{\varphi} \frac{\mathrm{d}\varphi \mathrm{d}\theta }{4\pi}\, & \text{ for } \theta\in [-\frac{\pi}{2},\frac{\pi}{2}],\\
        C_a e^{-a\mid \pi - |\theta|\mid^2} \cos{\varphi} \frac{\mathrm{d}\varphi \mathrm{d}\theta }{4\pi}\, &  \text{ for } \theta\in ]-\pi,-\frac{\pi}{2}]\cup [\frac{\pi}{2},\pi] ,\\
       \end{cases}
    \end{align}

    when \(a\to\infty\), with \(C_a\) the normalisation constant. 
    We prove weak convergence exactly the same way than in the proof Proposition~\ref{theo: Bayesian posterior on the great circle with geodesic distance} (see Section~\ref{subsec: proof of proposition bayesian posterior on the great circle with geodesic distance}).
    So we do not detail the calculus and directly display the resulting Bayes posterior: 

    \begin{align}\label{eq: bayes posterior on medidian from non euclidian distance}
        \frac{\cos{\varphi}}{4} \d{\varphi} \times  \mathds{1}_{\{0,\pi\}}(\theta) \,.
    \end{align}

\section{Other Generalizations of Bayes’ Rule}\label{sec:other-generalization-of-bayes-rule}

Beyond the framework developed in this paper, the Borel–Kolmogorov paradox has been approached through two different methodologies. 
This appendix presents these alternative approaches to extending the classical definition of conditional probability \eqref{def: conditional probability} to events of null measure. 
We explain why these methods fail to satisfy two essential criteria for a rigorous extension: (i) a mathematically precise definition with a clear modeling context, and (ii) uniqueness of the posterior distribution for each conditioning set within a given probabilistic model.

\subsection{Limit Process on Conditional Probability}\label{sec:limit-process-proba-condi}

    \citet[Section 15.7]{jaynes_probability_2003} and \citet[Sections 3]{bungert_lion_2022} emonstrate how to derive a canonical Bayesian formula for the conditional set with null prior measure, by taking the limit of conditional probabilities over a sequence of enlarged sets with positive prior measure:
    
    \begin{align}\label{eq:bayes posterior as limit of conditional probability}
        \nu(B\mid A)=\lim_{\varepsilon\to 0} \frac{\nu(B^\varepsilon \cap A^\varepsilon)}{\nu(A^\varepsilon)}\,,
    \end{align}

    where each \(C^\varepsilon\) is an enlargement of the set \(C\) satisfying

    \begin{align}\label{def:distance neighbor extension}
        \bigcap_{\varepsilon > 0} C^\varepsilon = C \, \text{ for all } C \text{ measurable}  \,,
    \end{align}

    and \(\nu(A^\varepsilon) > 0 \) for all \(\varepsilon > 0\).

    The conditional probability defined in equation \eqref{eq:bayes posterior as limit of conditional probability} depends critically on the choice of the approximating enlargements described in  \eqref{def:distance neighbor extension}. 
    Both \citet[Section 15.7]{jaynes_probability_2003} and \citet{bungert_lion_2022} acknowledge that the manner in which these enlarged sets are constructed directly influences the resulting limiting posterior. 
    This observation highlights the inherent multiplicity of posterior distributions that can arise from such limit-based derivations.

    Each author advocates for a specific canonical posterior and dismisses alternative formulations, thereby relying on particular choices rather than deriving a universally applicable rule. 
    In their respective works, \citet[Section 15.7]{jaynes_probability_2003} provides heuristic insight, while \citet[Section 3.3]{bungert_lion_2022} offer a formal theorem that guides the selection of enlargements leading to their preferred formula. 
    However, neither works account for or interpret the alternative posteriors that emerge from different approximation schemes.
    
    In contrast, our approach interprets this multiplicity as a consequence of implicit topological assumptions; specifically, the choice of metric used to define the approximating sequence via metric neighbourhood (see \eqref{def:distance neighbor extension of A}).  
    By making this dependence explicit, we provide a principled resolution to the ambiguity inherent in such constructions.

\subsection{Conditional Probability via Conditional Expectation}\label{subsec: probability theory do not lead to unicity of Bayes Rule}

    To extend conditional probability rigorously, several authors—including \citet[Chapter 5.2]{kolmogorov_foundations_2018}, \citet{klenke_conditional_2020}, \citet{gyenis_conditioning_2017}, and \citet{easwaran_conditional_2019}—embed it within the broader framework of conditional expectation. 
    This approach is justified by the identity

    \begin{equation}\label{eq:bayes-subcase-conditional-expectation}
        \E{\mathbb{P}}{\mathds{1}_{B} \mid A} = \Prob{}{B\mid A} \text{(see \citep[Definition 8.9]{klenke_conditional_2020} for instance)}\,,
    \end{equation} 

    which allows conditional probability to be interpreted as a special case of conditional expectation when \(\Prob{}{A}>0\).

    \citet{gyenis_conditioning_2017} demonstrate that, for a fixed pair \((\mathbb{P},A)\) with \(\mathbb{P}(A)=0\), the conditional expectation framework yields infinitely many valid posterior distributions. 
    We briefly outline their construction and explain why it fails to resolve the Borel–Kolmogorov paradox.

    \begin{defi}[Conditional Expectation]
        Let \((E,\mathcal{E},\nu)\) be  a probabilistic space and \(\mathcal{A}\subset \mathcal{E}\) a sub-\(\sigma\)-algebra.
        A conditional expectation with respect to \(\mathcal{A}\) is a map
        \begin{equation}
            \E{\nu}{.|\mathcal{A}} \,: \; L^1_{\nu}(E,\mathcal{E}) \longmapsto L^1_{\nu}(E,\mathcal{A}) \,,
        \end{equation}

        satisfying for all \(f\in L^1_{\nu}(E,\mathcal{E})\):

        \begin{enumerate}[label=(\roman*)]
            \item  \(\E{\nu}{f|\mathcal{A}}\)  is \(\mathcal{A}\)-measurable,           
            \item \(\E{\nu}{f} = \E{\nu}{\E{\nu}{f|\mathcal{A}}}\,\).
        \end{enumerate}

        The function \(\E{\nu}{f|\mathcal{A}}\) is defined up to \(\nu\)-null sets.
    \end{defi}
    \medskip
 
   \citet[Section 2.4]{gyenis_conditioning_2017} define the conditional probability formulas for Bayesian Inference as follows: 
    
    \begin{defi}[Conditional Probability via Conditional Expectation]\label{def: Gyenis def of Bayes posterior}
        On a measurable space \((E,\mathcal{E})\), consider a prior \(\nu\) and a measurable event \(A\). 
        Choose a \(\sigma\)-sub-Algebra \(\mathcal{A}\) such that \(A\) is an atomic event, 
        then choose one \(\mathcal{A}\)-conditional expectation map \(\E{\nu}{.|\mathcal{A}}\). 
        Finally, consider the unique probability measure \(\mu_{A}\) on \((E,\mathcal{A})\) such that 
        \begin{align}
            \mu_{A}(A) = 1 \,,\; \mu_{A}(A^\complement) = 0 \,.
        \end{align}
        The \((\nu,A,\mathcal{A},\E{\nu}{.|\mathcal{A}})\)-conditional probability is the unique probability measure \(\mu\) on \((E,\mathcal{E})\) such that
        
        \begin{align}\label{eq:condition-proba-og-Gyenis}
            \E{\mu}{f} = \int_E \E{\nu}{f|\mathcal{A}} \d{\mu_{A}} = \E{\mu_{A}}{\E{\nu}{f|\mathcal{A}}} \,.
        \end{align}

        A \((\nu,A)\)-conditional probability is then unique up to the choices \((\mathcal{A},\E{\nu}{.|\mathcal{A}})\).
    \end{defi}
    \medskip

    This definition depends on the arbitrary choices of the sub-\(\sigma\)-algebra \(\mathcal{A}\) and of the conditional expectation \(\E{\nu}{.|\mathcal{A}}\).
    Purely based on probability theory, the derivations of \citet{gyenis_conditioning_2017} are correct, rigorous, and applicable to any probability space \((E,\mathcal{E},\nu)\) on any measurable event \(A\). 
    However, their interpretation is flawed. 

    Definition~\ref{def: Gyenis def of Bayes posterior} requires an arbitrary choice of the form of the conditional expectation \(\E{\nu}{f|\mathcal{A}}\) on \(\nu\)-null measurable sets for all \(f\in L^1_{\nu}(E,\mathcal{E})\).
    When \(\nu(A)=0\), this choice directly determines the form of the posterior. 
    As a result, any posterior distribution is valid under Definition~\ref{def: Gyenis def of Bayes posterior}, provided it arises from some conditional expectation.
    Consequently, both posteriors \eqref{eq: uniform posterior on meridian}, \eqref{eq: posterior on meridian} on the sphere (\eqref{eq:chart on sphere},\eqref{eq: uniform measure on sphere}) derived by \citet{gyenis_conditioning_2017}  are valid for both sub-\(\sigma\)-algebras they consider—but so is any other posterior.

    In summary, while Definition~\ref{def: Gyenis def of Bayes posterior} offers a rigorous and general formulation of conditional probability within probability theory, it does not resolve the Borel–Kolmogorov paradox. 
    It confirms that probability theory alone cannot provide a unique and principled extension of Bayes’ rule to events of null measure.

\end{document}